\renewcommand\subsection{\leftskip 0pt\@startsection{subsection}{2}{\z@}%
                                     {-3.25ex\@plus -1ex \@minus -.2ex}%
                                     {1.5ex \@plus .2ex}%
                                     {\normalfont\normalsize\bfseries}}
\renewcommand\subsubsection{\@startsection{subsubsection}{3}{\z@}%
                                     {-3.25ex\@plus -1ex \@minus -.2ex}%
                                     {1.5ex \@plus .2ex}%
                                     {\normalfont\normalsize\bfseries\leftskip 3ex}}
\title{Algorithmic canonical stratifications of simplicial complexes}
\author{Ryo Asai\textsuperscript{\dag}}
\address{Colfax International, 2805 Bowers Ave, Santa Clara, CA 95051, USA}
\email{ryo@colfax-intl.com}
\thanks{\textsuperscript{\dag} Colfax Research}
\author{Jay Shah\textsuperscript{\ddag}}
\address{Fachbereich Mathematik und Informatik, WWU Münster, 48149 M\"{u}nster, Germany}
\email{jayhshah@gmail.com}
\thanks{\textsuperscript{\ddag} Department of Mathematics and Computer Science, University of Münster}
\begin{document}

\tikzcdset{arrow style=tikz, diagrams={>=stealth}}

\begin{abstract} We introduce a new algorithm for the structural analysis of finite abstract simplicial complexes based on local homology. Through an iterative and top-down procedure, our algorithm computes a stratification $\pi$ of the poset $P$ of simplices of a simplicial complex $K$, such that for each strata $P_{\pi=i} \subset P$, $P_{\pi=i}$ is maximal among all open subposets $U \subset \overline{P_{\pi=i}}$ in its closure such that the restriction of the local $\ZZ$-homology sheaf of $\overline{P_{\pi=i}}$ to $U$ is locally constant. Passage to the localization of $P$ dictated by $\pi$ then attaches a canonical stratified homotopy type to $K$.

Using $\infty$-categorical methods, we first prove that the proposed algorithm correctly computes the canonical stratification of a simplicial complex; along the way, we prove a few general results about sheaves on posets and the homotopy types of links that may be of independent interest. We then present a pseudocode implementation of the algorithm, with special focus given to the case of dimension $\leq 3$, and show that it runs in polynomial time. In particular, an $n$-dimensional simplicial complex with size $s$ and $n\leq3$ can be processed in O($s^2$) time or O($s$) given one further assumption on the structure. Processing Delaunay triangulations of $2$-spheres and $3$-balls provides experimental confirmation of this linear running time.
\end{abstract}

\date{\today}
\maketitle

\tableofcontents

\section{Introduction}
Our principal aim in this paper is to detail an algorithm for computing the \emph{canonical stratification} of a simplicial complex in the sense of Nanda \cite{NandaStrat}. Let us first explain the topological significance of this invariant. Suppose that $K$ is a simplicial complex, which we think of as a combinatorial presentation of a topological space. Then one has a variety of combinatorial and algebraic invariants of $K$, among the most basic of which is the \emph{homology} of $K$. The classical Poincar\'{e} duality theorem highlights the centrality and utility of homology as a tool for studying closed manifolds. However, when used to study simplicial complexes that are not closed manifolds, homology often proves to be too coarse of an invariant; to take a simple example, the homology of an $n$-dimensional disk equals that of a point, so homology loses information about dimension. More generally, any invariant of $K$ that is purely an invariant of its homotopy type $\sK$ suffers from this deficiency.

For a finer invariant of $K$, we can instead consider the \emph{local homology} $H_{\ast}(\sigma)$ of a simplex $\sigma \in K$, which is the reduced homology of the compactification of a small open neighborhood about an interior point in $\sigma$. This invariant is sensitive to the local structure of $K$ about $\sigma$, and can, for example, distinguish between spaces of differing dimension, as well as detect the presence of singularities. Local homology therefore presents itself a candidate for addressing some of the deficiencies of homology, and indeed the homotopy type $\sK$ itself. On the other hand, local homology is a \emph{local} invariant recorded for individual simplices, which is undesirable from the perspective of gleaning insight into the entire structure of $K$ as compared to a single, \emph{global} invariant such as $\sK$. We are led to ask:
\\

\textbf{Question}: Can one leverage the information supplied by the assemblage of local homology groups to construct a global invariant of $K$ that refines its homotopy type $\sK$?
\\

For example, let us consider this question in the case where $K$ is the triangulation of a compact $n$-dimensional manifold $M$ with boundary $\partial M$, such as the $2$-disk $D^2$

{\centering
\includegraphics[scale=0.4]{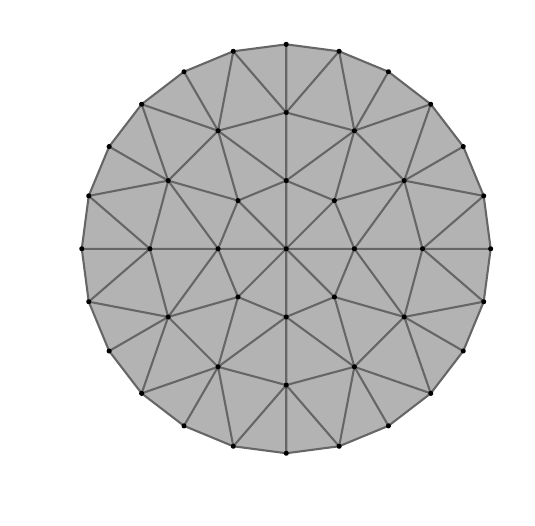}
\par}
Then given a simplex $\sigma \in K$, there are two possibilities for the value of $H_\ast(\sigma)$:

\begin{enumerate}
	\item $H_i(\sigma) = \{ \ZZ \text{ for } i=n \text{ and } $0$ \text{ otherwise} \}$  if and only if points in the interior of $\sigma$ lie in the interior $M - \partial M$. Call $\sigma$ an \emph{interior} simplex.
	\item $H_i(\sigma) = 0$ for all $i$ if and only if all points in $\sigma$ lie on the boundary $\partial M$. Call $\sigma$ a \emph{boundary} simplex.
\end{enumerate}

Now let $P$ be the \emph{poset of simplices} of $K$, so $P$ has for its objects the simplices $\sigma \in K$, with the partial order defined such that $\sigma \leq \tau$ if and only if $\sigma \subset \tau$. We can use local homology to partition $P$ into two subposets, in the following way:
\begin{itemize}
	\item[($\ast$)] Let $[1]$ denote the totally ordered set $\{0<1\}$. Note that for any inclusion $\sigma \subset \tau$, if $\sigma$ is an interior simplex, then $\tau$ is an interior simplex. Therefore, we may define a map of posets $\pi: P \to [1]$ that sends $\sigma$ to $0$ if it is a boundary simplex and $1$ if it is an interior simplex.
\end{itemize}

Then the classifying space of the fiber $P_{\pi=0} \coloneq \{0\} \times_{[1]} P$ is homotopy equivalent to the boundary $\partial M$, while the classifying space of the other fiber $P_{\pi=1} \coloneq \{1\} \times_{[1]} P$ is homotopy equivalent to the interior $M - \partial M$. For example, in the case of the above triangulation of the $2$-disk, we have that the sets of simplices in $P_{\pi=0}$ and $P_{\pi=1}$ correspond to the two subspaces of $K$

{\centering \includegraphics[scale=.4]{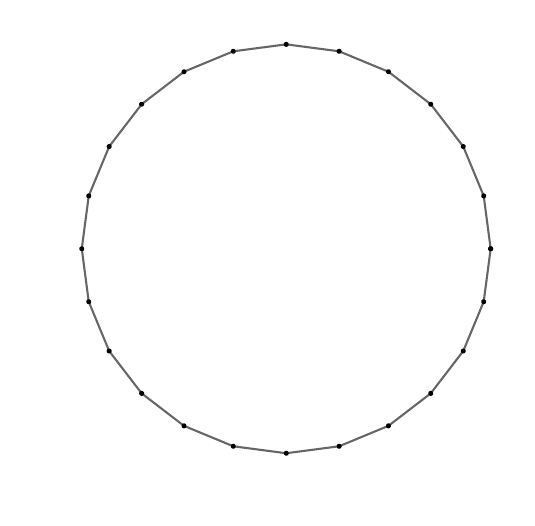} \quad \includegraphics[scale=.4]{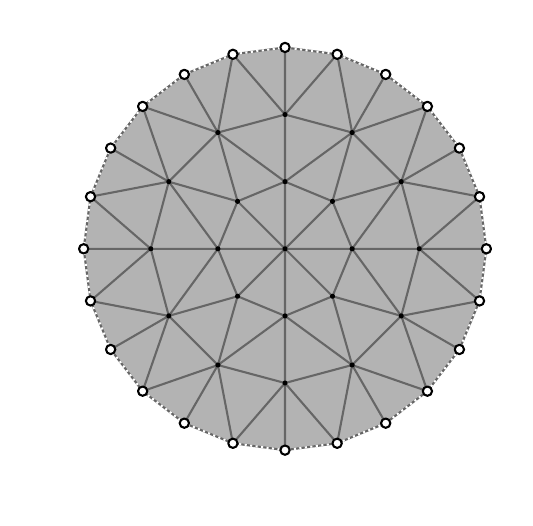} \par}

where the first is $\partial D^2 \simeq S^1$ and the second is its open complement $D^2 - \partial D^2 \simeq \ast$.

Furthermore, the datum of the map $\pi: P \to [1]$ retains more information than just the fibers: we also have the poset of sections $\Fun_{/[1]}([1],P)$, whose classifying space yields the \emph{holink} \cite{Quinn:strat} of the inclusion $\partial M \subset M$, as well as $\sK$ given by the classifying space of $P$ itself. We call the map $\pi$ the \emph{canonical stratification} of $K$ and view it as a discrete presentation of all of this topological information. Moreover, appealing to the higher categorical theory of localization, we can invert the morphisms in the fibers of $\pi$ to produce an $\infty$-category $\sK^{\can}$ equipped with a map $\Pi^{\can}: \sK^{\can} \to [1]$, the \emph{canonical stratified homotopy type} derived from $K$. $\sK^{\can}$ then constitutes our desired refinement of $\sK$.

As a second example, suppose instead that $K$ is a triangulation of the pinched annulus

{\centering
\includegraphics[scale=0.4]{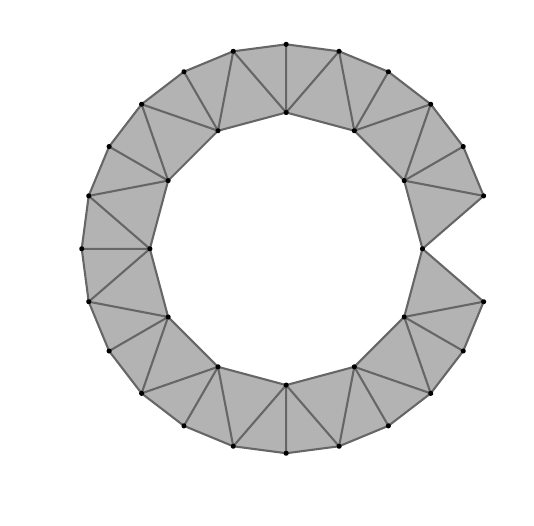}
\par}
\vspace*{-4mm}
Local homology first identifies the interior simplices of $K$ as before, whose removal then yields a subcomplex $L$ which is a triangulation of the wedge of two circles. Next, local homology for the lower $1$-dimensional simplicial complex $L$ identifies those simplices interior in $L$. Finally, removal of these simplices leaves only the vertex of intersection. We can depict this whole process by the sequence of figures

{\centering\includegraphics[scale=.4]{step0.pdf} \includegraphics[scale=.4]{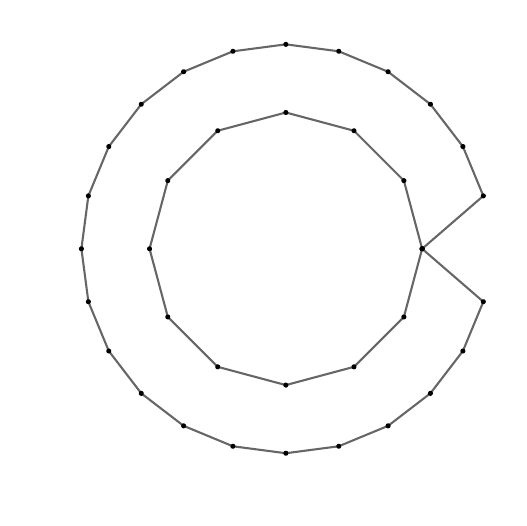}\includegraphics[scale=.4]{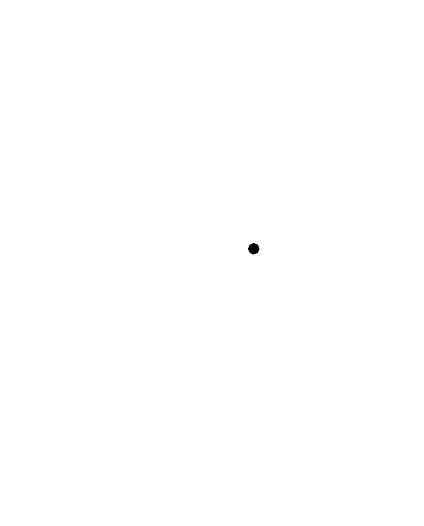}\par}
\vspace*{-4mm}
As before, we can correspondingly construct a map of posets $\pi: P \to [2] \coloneq \{0 < 1 < 2\}$ such that interior simplices in $K$ are sent to $2$, interior simplices in the remainder $L$ are sent to $1$, and the final vertex of intersection is sent to $0$. Collapsing the fibers of $\pi$ to their connected components then yields the poset
\[ \begin{tikzcd}[row sep=3ex, column sep=12ex, text height=.5ex, text depth=0.25ex]
 & \bullet \ar[shift left=1]{rd} & \\
 \bullet \ar[shift left=1]{ru} \ar[shift right=1]{rd} \ar{rr} & & \bullet \\
 & \bullet \ar[shift right=1]{ru}
\end{tikzcd} \]

Furthermore, in this example the $\infty$-category $\sK^{\can}$ obtained by localization is in fact equivalent to the ordinary category
\[ \begin{tikzcd}[row sep=3ex, column sep=12ex, text height=.5ex, text depth=0.25ex]
 & \bullet \ar[shift left=1]{rd}{g_0} & \\
\bullet \ar[shift left=2]{ru}{f_{0}^{+}} \ar{ru}[swap]{f_{0}^{-}} \ar[shift right=2]{rd}[swap, yshift=-1]{f_1^{-}} \ar{rd}[yshift=-1]{f_1^{+}} \ar[shift right=.8]{rr}[swap, yshift=-2.5]{h^{-}} \ar[shift left=.8]{rr}{h^{+}} & & \bullet \\
& \bullet \ar[shift right=1]{ru}[swap]{g_1} &
\end{tikzcd} \]

where the composition is defined by $g_i \circ f^+_i = h^+$ and $g_i \circ f^-_i = h^-$. We can interpret these morphisms as ``exit-paths'', which are homotopy classes of paths where paths and homotopies are constrained to never descend in strata. Zooming in near the vertex of intersection, we may depict these exit-paths as
\begin{center}
\begin{tikzpicture}[scale=.8]
\draw (0,0) -- (-5,2);
\draw (0,0) -- (-5,-2);
\draw (0,0) -- (5,2);
\draw (0,0) -- (5,-2);
\draw[dotted] (-5,2) -- (-5,-2);
\draw[dotted] (5,2) -- (5,-2);
\fill[gray!30] (0,0) -- (-5,2) -- (-5,-2) -- (0,0);
\fill[gray!30] (0,0) -- (5,2) -- (5,-2) -- (0,0);
\draw[black,->,line width=0.5mm] (0,0) -- (-2.5,1) node[black,midway,above] {$f_0^+$};
\draw[black,->,line width=0.5mm] (0,0) -- (2.5,1) node[black,midway,above] {$f_0^-$};
\draw[black,->,line width=0.5mm] (0,0) -- (-2.5,-1) node[black,midway,below] {$f_1^+$};
\draw[black,->,line width=0.5mm] (0,0) -- (2.5,-1) node[black,midway,below] {$f_1^-$};
\draw[black,->,line width=0.5mm] (0,0) -- (-4,0) node[black,midway,above] {$h^+$};
\draw[black,->,line width=0.5mm] (0,0) -- (4,0) node[black,midway,above] {$h^-$};
\draw[black,->,line width=0.5mm] (0,0) -- (4,0) node[black,midway,above] {$h^-$};
\draw[black,->,line width=0.5mm] (3.75,1.5) -- (4,.5) node[black,midway,right] {$g_0$};
\draw[black,->,line width=0.5mm] (3.75,-1.5) -- (4,-.5) node[black,midway,right] {$g_1$};
\draw[black,->,line width=0.5mm] (-3.75,1.5) -- (-4,.5) node[black,midway,left] {$g_0$};
\draw[black,->,line width=0.5mm] (-3.75,-1.5) -- (-4,-.5) node[black,midway,left] {$g_1$};
\end{tikzpicture}
\end{center}

In fact, $\sK^{\can}$ is the \emph{exit-path category} of MacPherson and Treumann \cite{Treumann:exit} with respect to the canonical stratification $\pi$. This example attests to the fine, and yet computable, information that the canonical stratification encodes regarding singularities, and its ability to distinguish between homotopy equivalent spaces like a pinched annulus, an ordinary annulus, and a circle.

Proceeding to the general situation, we can divide this type of analysis into two conceptual steps:
\begin{enumerate}
	\item Perform a traversal of the poset $P$ of simplices of $K$. Starting with the maximal simplices of dimension equal to $n = \dim(K)$ and proceeding down in $P$, label simplices $\sigma$ as \emph{generic} if every simplex $\tau > \sigma$ is generic and the local homology $H_{\ast}(\sigma)$ equals that of an $n$-sphere. This defines the \emph{generic strata} $G \subset P$, which is closed upwards. The complement $P - G$ is then the poset of simplices of a subcomplex $K^1 \subset K$ of smaller dimension. We then repeat the procedure with $K^1$ in place of $K$. Continuing, we obtain a filtration
\[ K = K^0 \supset K^1 \supset K^2 \supset \ldots \supset K^k = \emptyset. \]  
	Define the canonical stratification of $K$ to be the map of posets $\pi: P \to [n]$ given by $\pi(\sigma) = \min\{\dim(K^i) \: | \: \sigma \in K^i \}$.
	\item Perform the ``fiberwise'' localization\footnote{To be more precise, this is the $\Ex^{\infty}_{[n]}$ functor of Remark \ref{rem:localization}.} of $P$ with respect to the map $\pi: P \to [n]$ to obtain the canonical stratified homotopy type $\Pi^{\can}: \sK^{\can} \to [n]$.
\end{enumerate}

This paper is devoted to substantiating and clarifying various aspects pertinent to step (1), deferring the serious study of the invariant $\sK^{\can}$ produced in step (2) to a future work. Our mathematical work is accomplished in \S\S \ref{sec:sheaves}, \ref{sec:deltaStructures}, and \ref{sec:stratification} and is mainly concerned with proving that Algorithm \ref{algorithm} correctly computes the canonical stratification. We then give a pseudocode implementation of Algorithm \ref{algorithm} in \S \ref{sec:st-alg}, which can be read independently from the rest of the paper. Our hope is that this final section will be useful for those looking to apply this algorithm in their own work.
\\

Let us now comment on some features of the theoretical framework in which we will situate our analysis. First, as is already apparent, it is the poset $P$ which is the relevant object for canonical stratification, as opposed to the simplicial complex $K$ and any geometric properties it may otherwise possess. We will thus prove theorems about posets, though we will eventually specialize to the case where the poset is that given by the simplices of an \emph{abstract} simplicial complex (Definition \ref{dfn:deltaStructure}). Second, to systematically reason about the homotopy theory of the various objects involved, we adopt the formalism of $\infty$-categories. Though technically demanding, this theory greatly facilitates the manipulation of homotopy limits and colimits, which recur repeatedly in our work. Third, we will make use of the theory of sheaves on posets. This theory appears because local homology naturally organizes itself into a sheaf on $P$ due to its functoriality in the simplex argument: given $\sigma$ a face of $\tau$, one has an induced homomorphism $H_{\ast}(\sigma) \to H_{\ast}(\tau)$ of local homology groups. We will consider canonical stratification with respect to a homotopical lift of the local $\ZZ$-homology sheaf to the $\infty$-category of spectra, the sheaf $\LL_P$ of Definition \ref{dfn:orientationSheaf}. This yields an \emph{a priori} different definition of canonical stratification (Definition \ref{dfn:strat}) that has better formal properties. We then reconcile this definition with the one described above (Proposition \ref{prp:extendingLocalConstancy} and Remark \ref{rem:Hurewicz}).




\subsection{Main contributions and related work}


The problem of algorithmic determination of the canonical stratification was first studied in \cite{NandaStrat} in the context of regular CW complexes, and subsequently in \cite{BrownWang} in the more general context of posets.\footnote{As discussed in their paper, \cite{BrownWang} works with a slightly different notion of stratification: also see \ref{shriekVsStar}.} Our work continues this line of investigation and culminates in an algorithm (Algorithm \ref{algorithm} and \S \ref{sec:st-alg}) that differs in a few key respects from this existing work. Specifically:

\begin{itemize}
	\item Previous algorithms such as \cite[\S 6.2]{NandaStrat} proceed by checking that various maps in the face poset of a regular CW complex induce quasi-isomorphisms on local (co)homology. In contrast, we prove that for the purposes of the canonical stratification of a simplicial complex, computing maps to be quasi-isomorphisms is superfluous; instead, it suffices to compute the small links (Definition \ref{dfn:link}) of individual simplices to be stable spheres of the appropriate dimension (the aforementioned Proposition \ref{prp:extendingLocalConstancy}).
	\item We also prove that the small links are constrained by Poincar\'{e} duality (Remark \ref{rem:reduceComputationPoincareDuality}), which further reduces the amount of necessary computation.\footnote{For our proofs of these two results, we make use of the combinatorial properties of abstract simplicial complexes and their posets of simplices, though we believe that they more generally extend to face posets of regular CW complexes.}
	\item In particular, for simplicial complexes of dimension $\leq 3$, our algorithm entirely avoids any linear algebraic computation of homology groups (\S \ref{sec:3d-alg}).
\end{itemize}

Along the way, we prove a few results that may be of independent interest, most notably a higher categorical descent result (Theorem \ref{thm:MayerVietoris}) and a result on links (Theorem \ref{thm:shiftingLink}) that may be new in the infinite case.

\subsection{Notation and conventions}

Throughout this paper, we will freely use the formalism of $\infty$-categories as expounded in Lurie's books \cite{HTT,HA}, from which we adopt numerous conventions and pieces of notation. In particular, let us highlight the following:
\begin{itemize}
	\item $[n]$ is the totally ordered set $\{0 < 1 < ... < n\}$. The simplex category $\Delta$ is the category with objects $\{ [n] : n \in \NN \}$ and with morphisms the order preserving maps. $s\Set$ is the category of \emph{simplicial sets}, i.e., functors $\Delta^{\op} \to \Set$.
	\item An \emph{$\infty$-category} is a simplicial set that satisfies the inner horn filling condition, and a \emph{space} is a Kan complex. Note that a	space is thus an \emph{$\infty$-groupoid}, i.e., an $\infty$-category in which every morphism is an equivalence.
	\item $\Cat_{\infty}$ denotes the $\infty$-category of (small) $\infty$-categories, $\sS$ denotes the $\infty$-category of spaces, $\Sp$ denotes the $\infty$-category of spectra, and $\Poset$ denotes the category of posets.
	\item Let $\sS_{\ast} \coloneq \sS^{\ast/}$ be the $\infty$-category of pointed spaces. We have the usual adjunctions
\[ \begin{tikzcd}[row sep=4ex, column sep=4ex, text height=1.5ex, text depth=0.25ex]
\Sigma^{\infty}_+ \colon \sS \ar[shift left=1]{r}{(-)_+} & \sS_{\ast} \ar[shift left=1]{r}{\Sigma^{\infty}} \ar[shift left=1]{l} & \Sp \ar[shift left=1]{l} \colon \Omega^{\infty}
\end{tikzcd}. \]
	\item We have a nerve functor $N: \Poset \to \Cat_{\infty}$, which is fully faithful with essential image given by those $\infty$-categories whose mapping spaces are either empty or contractible. We will typically suppress the extra symbol $N$ and simply refer to $P$ when regarding a poset $P$ as a category.\footnote{We will write $N(P)$ in a few places when we wish to emphasize the simplicial set given by the nerve of $P$.} 
	\item The geometric realization functor $|-|: \Cat_{\infty} \to \sS$ is defined to be the left adjoint of the inclusion $\sS \subset \Cat_{\infty}$ (as opposed to any particular point-set level model). When we write $|P|$ for $P$ a poset, we mean to regard $P$ as a category and then take its geometric realization, i.e., to consider the composite functor $\Poset \xto{N} \Cat_{\infty} \xto{|-|} \sS$.
	\item All categorical constructions are always meant in the $\infty$-categorical sense. For example, limits and colimits are necessarily homotopy limits and colimits.
	\item Limit and colimits involving posets are always regarded as being computed in $\Cat_{\infty}$. Note that limits of posets are computed the same in $\Poset$ or $\Cat_{\infty}$. However, our convention has force when computing colimits; for example, if $p: I \to \Poset$ is the constant functor at the terminal poset, then $\varinjlim (N \circ p) \simeq |I|$ is a space, which is not generally equivalent to a poset.
    \item Following \cite{HA}, we denote the smash product of spectra by $\otimes$ rather than $\wedge$.
\end{itemize}

\subsection{Acknowledgements}

We thank Elden Elmanto and the anonymous referee for a number of helpful comments on earlier versions of this article. We are also grateful to Jonas Frey for locating a mistake in our previous formulation of Corollary \ref{cor:SheavesAsFunctors} and to Ko Aoki for pointing out a mistake in our previous proof of Proposition \ref{prp:SheavesAreRKE}. J.S. was supported by NSF grant DMS-1547292 and by the Deutsche Forschungsgemeinschaft (DFG, German Research Foundation) under Germany’s Excellence Strategy EXC 2044–390685587, Mathematics Münster: Dynamics–Geometry–Structure.
Code for the dimension $\leq 3$ version of the algorithm is provided in a repository at \url{https://github.com/ColfaxResearch/CanonicalStratification}.

\section{Sheaves on posets} \label{sec:sheaves}


Let $P$ be a poset. 

\begin{dfn} Let $Q \subset P$ be a subposet. Then $Q$ is a \emph{cosieve} if it is closed upwards in $P$, i.e., for every $x \in Q$, if $y \geq x$ in $P$ then $y \in Q$. Dually, $Q$ is a \emph{sieve} if it is closed downwards in $P$, i.e., for every $x \in Q$, if $y \leq x$ in $P$ then $y \in Q$.

Let $\Op(P)$, resp. $\Cl(P)$ denote the poset of cosieves, resp. sieves of $P$, with the partial order defined by inclusion.
\end{dfn}

One may endow the set $P$ with the structure of a topological space by declaring the open sets to be the cosieves of $P$. Alternatively, one can view $P$ as a category with objects given by the elements $x \in P$ and morphisms defined by the relation that there exists a unique morphism $x \to y$ if and only if $x \leq y$. When we reason about the homotopical properties of posets, it will be this second, categorical perspective that predominates. Nonetheless, the intuition afforded by topological spaces has its uses, most notably in understanding sheaves on posets. In this introductory section, we will explain how a few standard notions regarding sheaves on topological spaces translate to the setting of posets. We will also introduce and study the ($\Sp$-valued) local homology sheaf $\LL_P$ of a poset (Definition \ref{dfn:orientationSheaf}), in preparation for the stratification algorithm of \S \ref{sec:stratification}.
\\

Let $\sC$ be an $\infty$-category with all limits. We first recall the definition of a $\sC$-valued sheaf on a topological space.

\begin{dfn} Suppose $X$ is a topological space and let $\Op(X)$ be the poset of open subsets of $X$. We define a Grothendieck topology (\cite[Def.~6.2.2.1]{HTT}) on $\Op(X)$ as follows: for every open subset $U \subset X$, a sieve $J \subset \Op(X)_{\leq U}$ is said to be a \emph{covering sieve} if and only if the union over all open sets contained in $J$ equals $U$. Then a functor $F: \Op(X)^{\op} \to \sC$ is a \emph{$\sC$-valued sheaf} if and only if for every covering sieve $J \subset \Op(X)_{\leq U}$, the natural map
\[ F(U) \xto{\sim} \varprojlim_{V \in J^{\op}} F(V)  \]
is an equivalence in $\sC$.

Let $\Shv_{\sC}(X) \subset \Fun(\Op(X)^{\op}, \sC)$ denote the full subcategory of $\sC$-valued sheaves on $X$.
\end{dfn}

Suppose that $X \to P$ is a continuous map of topological spaces with $P$ a poset topologized as indicated above. Then, modulo some technical conditions, it is a theorem of Lurie (\cite[Thm.~A.9.3]{HA}), extending work of MacPherson and Treumann \cite{Treumann:exit}, that there exists an $\infty$-category $\Exit_P(X)$ such that we have an equivalence of $\infty$-categories
\[ \Fun(\Exit_P(X), \sC) \simeq \Shv^{P\text{-cnstr}}_{\sC}(X) \]
between $\sC$-valued functors on $\Exit_P(X)$ and $P$-constructible $\sC$-valued sheaves on $X$.\footnote{A sheaf $F$ on $X$ is \emph{$P$-constructible} if it is locally constant when restricted to each fiber $X_p$ of the map $X \to P$.} In the degenerate case where $X = P$, $\Exit_P(X) \simeq P$ with $P$ regarded as a category via its nerve, and the condition of $P$-constructibility is automatic. We therefore obtain an equivalence
\[ \Fun(P, \sC) \simeq \Shv_{\sC}(P). \]

Unfortunately, the equivalence furnished by \cite[Thm.~A.9.3]{HA} is too inexplicit for our purposes. Our first goal is to give a direct proof of this equivalence in the case that $P$ is finite-dimensional (Definition \ref{dfn:dimension}). Observe that we have a full and faithful inclusion of posets $P \subset \Op(P)^{\op}$ given by sending an object $x \in P$ to the cosieve $P_{\geq x}$.

\begin{prp} \label{prp:SheavesAreRKE} A functor $F: \Op(P)^{\op} \to \sC$ is a $\sC$-valued sheaf if $F$ is a right Kan extension of its restriction to $P$.
\end{prp}
\begin{proof} Suppose that $F$ is a right Kan extension of $F|_{P}$. Let $J \subset \Op(P)_{\leq S}$ be any covering sieve for the Grothendieck topology on $\Op(P)$; this means that $J$ is a sieve in $\Op(P)_{\leq S}$ whose elements $\{S_\alpha \}$ form a cover of $S$. The inclusion $J^{\op} \subset \Op(P)^{\op}$ extends to a functor $(J^{\op})^{\lhd} \to \Op(P)^{\op}$ that sends the cone point to $S$, and for $F$ to be a sheaf it suffices to verify that the restriction of $F$ to $(J^{\op})^{\lhd}$ is a limit diagram. To prove this, we apply a limit decomposition result from \cite[\S 4.2.3]{HTT}. Define a functor
\[ H: J \to s\Set_{/N(S)}  \]
which sends $T \in J$ to $N(T) \subset N(S)$. We claim that $H$ satisfies the hypotheses of \cite[Prop.~4.2.3.8]{HTT}. By \cite[Rem.~4.2.3.9]{HTT}, it suffices to check that for any non-degenerate $n$-chain
$$\sigma = \left[ x_0 < x_1 < ... < x_n \right]$$
in $S$, the subposet $J_\sigma$ on objects $\{ T \in J : \sigma \subset T \}$ is weakly contractible. Using that the constituent elements $\{ S_\alpha \}$ of $J$ cover $S$, there exists some $S_\alpha$ such that $x_0 \in S_\alpha$, hence $\sigma \subset S_{\alpha}$ because $S_\alpha$ is a cosieve. Therefore, $J_\sigma$ is nonempty. Moreover, given two objects $T, T' \in J_{\sigma}$, we have that the intersection $T \cap T'$ is the product in $J_{\sigma}$. As a nonempty category that admits binary products, $J_{\sigma}$ is weakly contractible; indeed, recalling the standard argument, the adjunction $\left( \adjunct{\Delta}{J_{\sigma}}{J_{\sigma} \times J_{\sigma}}{\times} \right)$ implies that $|J_{\sigma}| \times |J_{\sigma}| \simeq |J_{\sigma}|$, hence $|J_{\sigma}| \simeq \ast$. Alternatively, one can observe that $J_{\sigma}$ is cosifted, hence weakly contractible.

Now by the dual of \cite[Cor.~4.2.3.10]{HTT} applied to the functor $F|_S: S \to \sC$, we see that
\[  \varprojlim_{x \in S} F|_S(x) \xto{\sim} \varprojlim_{T \in J^{\op}} \varprojlim_{x \in T} F|_T(x). \]
Because $F$ is a right Kan extension of $F|_P$, this map is identified with
\[ F(S) \xto{\sim}  \varprojlim_{T \in J^{\op}} F(T), \]
and the claim is proven.
\end{proof}

\begin{rem}
We comment on the use and meaning of \cite[Cor.~4.2.3.10]{HTT}. Suppose given an $\infty$-category $\sC$, a small $\infty$-category $\sK$, and a functor $p: \sK \to \sC$. Then given \emph{any} small $\infty$-category $\sI$ and functor $p_{\bullet}: \sI \to (\Cat_{\infty})_{/\sC}$ such that $\colim_{\sI} p_{\bullet} \simeq p$ and each $p_i: \sK_i \to \sC$ admits a limit $x_i \in \sC$, we have a natural equivalence
\begin{equation} \label{eq:limit_decomposition}
\varprojlim_{\sK} p \simeq \varprojlim_{i \in \sI^{\op}} \varprojlim_{\sK_i} p_i.
\end{equation}
Indeed, this follows essentially from the ``slice-slice'' adjunction
\[ \adjunct{\sC_{(-)/}}{(\Cat_{\infty})_{/\sC}}{(\Cat_{\infty})^{\op}_{/\sC}}{\sC_{/(-)}}; \]
see \cite[Thm.~8.1]{shah2022parametrized} for a proof.\footnote{The reference actually establishes a generalization to the setting of parametrized (co)limits with respect to a fixed base $\infty$-category $\sT$. Specializing to the case $\sT = \ast$ then obtains the equivalence \eqref{eq:limit_decomposition}.} The results of \cite[\S4.2.3]{HTT} then amount to an implementation of this observation at the level of simplicial sets, whence the conditions that appear in \cite[Prop.~4.2.3.8]{HTT}.
\end{rem}

\begin{prp} \label{prp:Hypercompleteness} Suppose $P$ is a finite-dimensional poset and let $\phi: F \to G$ be a morphism of $\sC$-valued sheaves on $P$ such that for all $x \in P$, the induced map $\phi_x: F(P_{\geq x}) \to G(P_{\geq x})$ is an equivalence in $\sC$. Then $\phi$ is an equivalence in $\Shv_{\sC}(P)$.
\end{prp}
\begin{proof} We proceed by induction on the dimension $n = \dim(P)$ of $P$. The claim is obvious if $n = 0$, i.e., $P$ is a discrete set, so suppose for the inductive hypothesis that $n >0$ and we have proven the claim for all posets $Q$ of dimension less than $n$. Let $U \subset P$ be a cosieve and consider the covering sieve $J \subset \Op(X)_{\leq U}$ generated by $\{ P_{\geq x} : x \in U \}$. To show that $\phi_U: F(U) \to G(U)$ is an equivalence, it suffices to show that $\phi_V: F(V) \to G(V)$ is an equivalence for all $V \in J$, i.e., for all $V$ contained in $P_{\geq x}$ for some $x \in U$. Either $V = P_{\geq x}$, in which case $\phi_V$ is an equivalence by assumption, or $x \notin V$ and hence $\dim(V) < n$, in which case $\phi_V$ is an equivalence by the inductive hypothesis. We conclude that $\phi$ is an equivalence.
\end{proof}

\begin{cor} \label{cor:SheavesAsFunctors} Right Kan extension along the inclusion $j: P \sub \Op(P)^{\op}$ implements a fully faithful embedding
\[ j_{\ast}: \Fun(P,\sC) \subset \Shv_{\sC}(P)  \]
that is right adjoint to the restriction $j^{\ast}$. Furthermore, if $P$ is finite-dimensional, then $j_{\ast}$ is an equivalence of $\infty$-categories.
\end{cor}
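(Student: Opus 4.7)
The plan is to deduce the corollary from the two preceding propositions via formal nonsense about Kan extensions, once one unpacks what the right Kan extension along $j$ looks like.

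First I would set up the basic adjunction. Since $\sC$ admits all limits, the right Kan extension $j_\ast \colon \Fun(P, \sC) \to \Fun(\Op(P)^{\op}, \sC)$ along $j$ exists and is automatically right adjoint to the restriction functor $j^\ast$. Unwinding the pointwise formula, and using that a morphism $U \to j(x) = P_{\geq x}$ in $\Op(P)^{\op}$ is the data $P_{\geq x} \subset U$, which for $U$ a cosieve is equivalent to $x \in U$, one obtains the explicit description
\[ (j_\ast F)(U) \simeq \varprojlim_{x \in U} F(x). \]
Because $j$ is fully faithful, the counit $j^\ast j_\ast \Rightarrow \id$ is an equivalence (equivalently, the $x \in U$ limit has $x$ as an initial object when $U = P_{\geq x}$), so $j_\ast$ is a fully faithful right adjoint.

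Next I would verify that $j_\ast$ lands in $\Shv_\sC(P)$. This is immediate from Prop.~\ref{prp:SheavesAreRKE}: for any $F \in \Fun(P, \sC)$, the functor $j_\ast F$ is tautologically a right Kan extension of its restriction $j^\ast j_\ast F \simeq F$ along $j$, so it is a sheaf. Thus $j^\ast \dashv j_\ast$ restricts to an adjunction $j^\ast \colon \Shv_\sC(P) \rightleftarrows \Fun(P, \sC) \colon j_\ast$ in which $j_\ast$ remains fully faithful.

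It remains to show $j_\ast$ is essentially surjective when $\dim(P) < \infty$, equivalently that the unit $\eta \colon G \to j_\ast j^\ast G$ is an equivalence for every sheaf $G$. Here I would invoke Prop.~\ref{prp:Hypercompleteness}: it suffices to check that $\eta_{P_{\geq x}}$ is an equivalence for every $x \in P$. By the explicit formula,
\[ (j_\ast j^\ast G)(P_{\geq x}) \simeq \varprojlim_{y \in P_{\geq x}} G(P_{\geq y}), \]
and the component of $\eta_{P_{\geq x}}$ at an index $y \geq x$ is the restriction map $G(P_{\geq x}) \to G(P_{\geq y})$ induced by the inclusion $P_{\geq y} \subset P_{\geq x}$. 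Since $x$ is the minimum of $P_{\geq x}$, it is initial in the indexing category, so the limit is computed by evaluation at $y = x$ and $\eta_{P_{\geq x}}$ is identified with $\id_{G(P_{\geq x})}$. This completes the argument.

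The only genuinely non-formal step is the final one, and the potential obstacle is subtle: one has to make sure the description of $\eta$ at $U = P_{\geq x}$ really does reduce to the identity, which relies on $j$ being fully faithful and on $x$ being initial in $P_{\geq x}$. Everything else — the existence of $j_\ast$, its sheaf-ness, fully faithfulness, and the reduction of the equivalence check to objects in the image of $j$ — is supplied by the preceding propositions plus standard Kan extension formalism.
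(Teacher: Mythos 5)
Your proposal is correct and follows the same route as the paper: establish the adjunction $j^\ast \dashv j_\ast$, deduce fully faithfulness of $j_\ast$ from fully faithfulness of $j$, apply Prop.~\ref{prp:SheavesAreRKE} to see that $j_\ast$ lands in sheaves, and apply Prop.~\ref{prp:Hypercompleteness} to reduce essential surjectivity to checking the unit on the cosieves $P_{\geq x}$. The only difference is that you spell out the pointwise Kan extension formula and explicitly verify the unit is an equivalence at $P_{\geq x}$, whereas the paper leaves this implicit (it also follows formally from fully faithfulness of $j_\ast$ and a triangle identity).
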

\begin{proof} Consider the adjunction of functor $\infty$-categories
$$ \adjunct{j^{\ast}}{\Fun(\Op(P)^{\op}, \sC)}{\Fun(P,\sC)}{j_{\ast}}$$
given by restriction $j^{\ast}$ and right Kan extension $j_{\ast}$. Since $j$ is fully faithful, $j_{\ast}$ is fully faithful by \cite[Prop.~4.3.2.15]{HTT}. Proposition \ref{prp:SheavesAreRKE} then shows that the essential image of $j_{\ast}$ is contained in the full subcategory $\Shv_{\sC}(P)$ and $j^{\ast} \dashv j_{\ast}$ restricts to the claimed adjunction. For the second assertion, Proposition \ref{prp:Hypercompleteness} implies that the unit map $F \to j_{\ast} j^{\ast} F$ is an equivalence for all $\sC$-valued sheaves $F$, hence $j_{\ast}$ is an equivalence. 
\end{proof}

\begin{rem} In the setting of $1$-categories, such a result has been previously obtained by Justin Curry \cite[\S 4.2.2]{CurryThesis} without finite-dimensionality hypotheses on $P$; see also \cite[Prop. B.6.4]{LurieUltra}. Corollary \ref{cor:SheavesAsFunctors} is not a precise analogue of this $1$-categorical result due to potential difficulties involving the failure of the $\infty$-topos $\Shv_{\sS}(P)$ to be hypercomplete. Indeed, Jonas Frey has informed us of a counterexample to the equivalence of Corollary \ref{cor:SheavesAsFunctors} for an infinite-dimensional poset, which he attributed to Charles Rezk and Mathieu Anel. This counterexample has also appeared in Aoki's work as \cite[Exm.~A.13]{aoki2020tensor}.

Aoki has since proven that for a general poset $P$, the embedding $j_{\ast}$ identifies $\Fun(P,\sS)$ with the full subcategory of $\Shv_{\sS}(P)$ on the hypercomplete sheaves \cite[Exm.~A.11]{aoki2020tensor} (which we stated as a conjecture in an earlier version of this paper). This is also recorded as \cite[Exm.~3.12.15]{Exodromy}, which cites Aoki for this fact.
\end{rem}

\begin{dfn} In view of Corollary \ref{cor:SheavesAsFunctors}, we will interchangeably refer to functors $F: P \to \sC$ as sheaves. If $F$ sends every morphism in $P$ to an equivalence in $\sC$, we say that $F$ is \emph{locally constant}. If $F$ is moreover equivalent to a constant functor, we say that $F$ is \emph{constant}.
\end{dfn}

We now define the sheaf of central interest in this paper. Given a poset $P$, let $c: P \to \Poset$ be the constant functor at $P$, and let $F: P \to \Poset$ be the functor $x \mapsto P - P_{\geq x}$. We have a natural transformation $\theta: F \Rightarrow c$ given objectwise by inclusion.

\begin{dfn} \label{dfn:orientationSheaf} Let $\LL_P^{\sS}$ be the cofiber of $|\theta|$ regarded as valued in pointed spaces
\[ \LL_P^{\sS}: P \to \sS_{\ast} \: , \quad x \mapsto |P|/|P-P_{\geq x}|. \]

We define the \emph{local homology sheaf} $\LL_P: P \to \Sp$ to be $\Sigma^{\infty }\LL^{\sS}_P$.\footnote{The adjective ``homology'' should be interpreted here with respect to coefficients given by the sphere spectrum.} Furthermore, given a ring $k$, we define the \emph{local $k$-homology sheaf} $\LL_{P}^k: P \to \categ{D}(k)$ by postcomposition of $\LL_P$ with the base-change functor $- \otimes H k : \Sp \to \categ{D}(k)$, where $\categ{D}(k)$ is the unbounded derived category of $k$-modules (viewed as an $\infty$-category).\footnote{Recall that this base-change functor is indeed given by smashing with the Eilenberg-MacLane spectrum $H k$ under the equivalence $\categ{D}(k) \simeq \Mod_{H k}$.}
\end{dfn}

\begin{rem} We have a point-set level model of $\LL_P^k$ as a functor valued in the category $\Ch_k$ of chain complexes of $k$-modules, given by $\LL_P^k(x) \coloneq C_{\ast}(P, P- P_{\geq x}; k)$. We will not emphasize this perspective on the local $k$-homology sheaf in this paper.
\end{rem}

\begin{prp} \label{prp:ExtendedOrientationSheaf} Suppose that $P$ is a finite poset.
\begin{enumerate} \item The functor $\widetilde{\LL_P}: \Op(P)^{\op} \to \Sp$ defined by
 \[ \widetilde{\LL_P}(S) \coloneq \Sigma^\infty(|P|/|P-S|) \]
is a sheaf.

\item For any ring $k$, the functor $\widetilde{\LL_P^k}: \Op(P)^{\op} \to \categ{D}(k)$ defined by 
\[ \widetilde{\LL_P^k}(S) \coloneq C_\ast(P,P-S; k) \]
is a sheaf.
\end{enumerate}
\end{prp}
\begin{proof} Because $P$ is finite, (2) follows from (1) in view of the base-change functor $\Sp \to \categ{D}(k)$ preserving finite limits. For (1), again using that $P$ is finite, it suffices to check that
\begin{enumerate} \item[(a)] $\widetilde{\LL_P}(\emptyset) \simeq 0$.
\item[(b)] For every cover of a cosieve $S$ by two cosieves $S_0$ and $S_1$, we have a pullback square
\[ \begin{tikzcd}[row sep=4ex, column sep=4ex, text height=1.5ex, text depth=0.25ex]
\widetilde{\LL_P}(S) \ar{r} \ar{d} & \widetilde{\LL_P}(S_0) \ar{d} \\
\widetilde{\LL_P}(S_1) \ar{r} & \widetilde{\LL_P}(S_0 \cap S_1).
\end{tikzcd} \]
\end{enumerate}
Indeed, given any covering sieve $J \subset \Op(P)_{\leq S}$ with maximal elements $\{ S_0, S_1 \}$, it is easy to check that the inclusion of $\{ S_0 \leftarrow S_0 \cap S_1 \rightarrow S_1 \}$ into $J$ is cofinal; moreover, a simple inductive argument extends the scope of (b) to a covering of $S$ by $n$ cosieves.

(a) holds by definition. For (b), using that $\Sigma^\infty$ sends pushout squares to pullback squares, it suffices to show that we have a pushout square of spaces

\[ \begin{tikzcd}[row sep=4ex, column sep=4ex, text height=1.5ex, text depth=0.25ex]
\left|P- S \right| \ar{r} \ar{d} & \left| P - S_0 \right| \ar{d} \\
\left| P-S_1 \right| \ar{r} & \left| P - (S_0 \cap S_1) \right|,
\end{tikzcd} \]

which follows from Theorem \ref{thm:MayerVietoris}.\footnote{Theorem \ref{thm:MayerVietoris} can also be used to directly handle the case of a cover of $S$ by $n$ cosieves.}
\end{proof}

\begin{cor} \label{cor:extendedOrientationSheaf} Under the equivalence of \ref{cor:SheavesAsFunctors}, $\widetilde{\LL_P}$ (resp. $\widetilde{\LL^k_P}$) corresponds to $\LL_P$ (resp. $\LL_P^k$).
\end{cor}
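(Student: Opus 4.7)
The plan is to unwind the equivalence supplied by Cor.~\ref{cor:SheavesAsFunctors} and observe that the corollary is essentially a tautology once we identify what the restriction along $j: P \subset \Op(P)^{\op}$ does to the sheaf $\widetilde{\LL_P}$. Recall that in Cor.~\ref{cor:SheavesAsFunctors}, the equivalence $j_{\ast}: \Fun(P,\sC) \xrightarrow{\sim} \Shv_{\sC}(P)$ is (right Kan extension along $j$, and hence) inverse to the restriction functor $j^{\ast}$ once cut down to $\Shv_{\sC}(P)$. Consequently, to show that a sheaf $\mathcal{F} \in \Shv_{\sC}(P)$ corresponds to a prescribed functor $G \colon P \to \sC$, it suffices to exhibit an equivalence $j^{\ast} \mathcal{F} \simeq G$ in $\Fun(P,\sC)$.

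First, I would confirm that the hypotheses of Cor.~\ref{cor:SheavesAsFunctors} are met: since $P$ is finite (by the hypothesis of Prop.~\ref{prp:ExtendedOrientationSheaf}), it is in particular finite-dimensional, so $j_\ast$ is an equivalence onto $\Shv_{\sC}(P)$. Next, Prop.~\ref{prp:ExtendedOrientationSheaf} certifies that $\widetilde{\LL_P}$ and $\widetilde{\LL_P^k}$ genuinely lie in the respective sheaf subcategories $\Shv_{\Sp}(P)$ and $\Shv_{D(k)}(P)$, so it is legitimate to apply the equivalence.

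It then remains to compute $j^{\ast} \widetilde{\LL_P}$ directly. By the definition of $j$, for $x \in P$ one has $j(x) = P_{\geq x}$, so
\[
(j^{\ast} \widetilde{\LL_P})(x) = \widetilde{\LL_P}(P_{\geq x}) = \Sigma^\infty \bigl( |P|/|P - P_{\geq x}| \bigr),
\]
and the right-hand side is precisely $\LL_P(x)$ by Def.~\ref{dfn:orientationSheaf}. One must of course check that this identification is natural in $x \in P$, but this is immediate from the functoriality of the construction $S \mapsto \Sigma^\infty(|P|/|P-S|)$ in $S \in \Op(P)^{\op}$: a morphism $x \leq y$ in $P$ corresponds to the inclusion $P_{\geq y} \subset P_{\geq x}$, and functoriality in both descriptions is transported along the same inclusion of the complements. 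The analogous computation for $\widetilde{\LL_P^k}$ then follows by postcomposing with the base-change functor $\Sp \to D(k)$, which commutes with all the structure in question.

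No substantive obstacle is anticipated, as the corollary is bookkeeping: once Cor.~\ref{cor:SheavesAsFunctors} and Prop.~\ref{prp:ExtendedOrientationSheaf} are in hand, the content reduces to matching definitions on the subposet $P \subset \Op(P)^{\op}$. The only point requiring a moment's care is ensuring that the equivalence of Cor.~\ref{cor:SheavesAsFunctors} is invoked in the correct direction, i.e., that the inverse of $j_{\ast}|_{\Shv_{\sC}(P)}$ is $j^{\ast}$, which is transparent from the proof given there (fully faithfulness of $j_{\ast}$ plus essential surjectivity via Prop.~\ref{prp:Hypercompleteness}).
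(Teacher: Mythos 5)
Your proof is correct and matches the paper's one-line argument: the paper simply observes that $(\widetilde{\LL_P})|_P = \LL_P$ and combines this with Prop.~\ref{prp:ExtendedOrientationSheaf}, which is exactly the content you spell out in more detail. Your explicit checks of naturality and of the direction of the equivalence $j_\ast \dashv j^\ast$ are worthwhile bookkeeping but not a departure from the paper's route.
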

\begin{proof} Together with Proposition \ref{prp:ExtendedOrientationSheaf}, this follows immediately from the observation that $(\widetilde{\LL_P})|_P = \LL_P$.
\end{proof}

\subsection{Descent over posets}

Consider a nice topological space $X$ and a vector bundle $V \to X$. The basic idea of \emph{descent} is that given a suitable open cover $\{ U_i \}$ of $X$, we can reconstruct $V \to X$ from the pulled-back vector bundles $V_i \coloneq V \times_{X} U_i \to U_i$ together with the ``gluing'' data of isomorphisms $\phi_{i j}: V_{i} \times_{U_i} (U_i \cap U_j) \xto{\cong} V_j \times_{U_j} (U_i \cap U_j)$ that satisfy appropriate compatibility (``cocycle'') conditions. In this subsection, we consider descent in a higher categorical setting. We replace the topological space $X$ by a poset $P$, the vector bundle $V \to X$ by a functor $\sC \to P$ with $\sC$ an $\infty$-category, and the cover $\{U_i\}$ of $X$ by a sieve or cosieve covering $\{P_i\}$ of $P$. Our main result is the formula of Theorem \ref{thm:MayerVietoris}. In the sequel, we will only need the case where $\sC = P$. However, we have decided to phrase our results at this level of generality so as to better expose the underlying ideas. 

\begin{prp} \label{prp:SieveCovering} Let $\sC$ be an $\infty$-category and let $\pi: \sC \to P$ be a functor. Regard $P$ as a subposet of the poset $\Cl(P)$ of sieves of $P$ via $x \mapsto P_{\leq x}$. Then the functor
\[ F: \Cl(P) \to \Cat_{\infty}, \quad Z \mapsto \sC \times_P Z \]
is a left Kan extension of its restriction to $P$.

Dually, regard $P^{\op}$ as a subposet of the poset $\Op(P)$ of cosieves of $P$ via $x \mapsto P_{\geq x}$. Then the functor
\[ G: \Op(P) \to \Cat_{\infty}, \quad U \mapsto \sC \times_P U \]
is a left Kan extension of its restriction to $P^{\op}$.
\end{prp}
\begin{proof} For the claim about $F$, we need to check that for any sieve $Z \subset P$, 
\[ \varinjlim\limits_{x \in Z} \sC \times_P P_{\leq x} \xto{\sim} \sC \times_P Z.\]
Replacing $\sC$ by $\sC \times_P Z$, we may suppose that $Z = P$. Then the claim is a consequence of the following general fact about a categorical fibration\footnote{Note that any functor $\sC \to P$ is necessarily a categorical fibration by \cite[Prop.~2.3.1.5]{HTT} and every equivalence in $P$ being an identity morphism.} $\pi: \sC \to \sB$ (where the base $\sB$ is now taken to be an arbitrary $\infty$-category): the colimit of the functor
\[ \sB \to \Cat_{\infty}, \quad x \mapsto \sC \times_{\sB} \sB^{/x} \]
is equivalent to $\sC$. To prove this, we use that this functor classifies the cocartesian fibration $\ev_1: \sC \times_{\sB} \sO(\sB) \to \sB$ (where $\sO(\sB)$ is notation for the arrow $\infty$-category $\Fun(\Delta^1, \sB)$). Therefore, an explicit model for its colimit is given by inverting the $\ev_1$-cocartesian edges in $\sC \times_{\sB} \sO(\sB)$ (cf. \cite[Cor.~3.3.4.3]{HTT}). Let $\cE$ be the collection of the $\ev_1$-cocartesian edges, so an edge
\[ \left( \begin{tikzcd}[row sep=4ex, column sep=4ex, text height=1.5ex, text depth=0.25ex]
c_0 \ar{r}{\alpha} & c_1
\end{tikzcd}, \quad
\begin{tikzcd}[row sep=4ex, column sep=4ex, text height=1.5ex, text depth=0.25ex]
\pi(c_0) \ar{r} \ar{d} & \pi(c_1) \ar{d} \\
t_0 \ar{r} & t_1
\end{tikzcd}
 \right) \]
belongs to $\cE$ if and only if $\alpha$ is an equivalence.

Regard $(\sC \times_{\sB} \sO(\sB), \cE)$ as a marked simplicial set. To complete the proof, it suffices to show that the projection functor $\pr_{\sC}: (\sC \times_{\sB} \sO(\sB), \cE) \to \sC^{\sim}$ is an equivalence in the marked model structure on $s\Set^+$ of \cite[\S 3]{HTT}, where $\sC^{\sim}$ denotes that we mark $\sC$ with its equivalences. For this, we observe that the identity section $\iota: \sC \to \sC \times_\sB \sO(\sB)$ is left adjoint to $\pr_{\sC}$, with $\pr_{\sC} \circ \iota = \id$ and counit map $\epsilon$
\[ \begin{tikzcd}[row sep=4ex, column sep=6ex, text height=1.5ex, text depth=0.25ex]
\{ 0 \} \times  \sC \times_\sB \sO(\sB) \ar{d} \ar[bend left=20]{rd}{\iota \circ \pr_C} \\
\Delta^1 \times \sC \times_\sB \sO(\sB) \ar{r}{\epsilon} & \sC \times_\sB \sO(\sB) \\
\{ 1 \} \times  \sC \times_\sB \sO(\sB) \ar{u} \ar[bend right=20]{ru}[swap]{\id}
\end{tikzcd} \]
defined such that on objects $(c,f: \pi(c) \to t)$, $\epsilon$ is given by the $\ev_1$-cocartesian edge
\[ \left( \begin{tikzcd}[row sep=4ex, column sep=4ex, text height=1.5ex, text depth=0.25ex]
c \ar{d}[swap]{=} \\
c
\end{tikzcd}, \quad
\begin{tikzcd}[row sep=4ex, column sep=4ex, text height=1.5ex, text depth=0.25ex]
\pi(c) \ar{r}{=} \ar{d}{=} & \pi(c) \ar{d}{f} \\
\pi(c) \ar{r}{f}  & t
\end{tikzcd} \right). \]
$\epsilon$ thus furnishes a marked homotopy between $\id$ and $\pr_C \circ \iota$ as self maps of $(\sC \times_{\sB} \sO(\sB), \cE)$, so we conclude that $\pr_{\sC}$ (as well as $\iota$) is a marked homotopy equivalence, \textit{a fortiori} a weak equivalence in $s\Set^+$.

Finally, a dual argument handles the claim about $G$.
\end{proof}

\begin{thm} \label{thm:MayerVietoris} Let $\sC$ be an $\infty$-category and let $\pi: \sC \to P$ be a functor. Let $P_0, ..., P_n$ be subposets of $P$ which cover $P$. Suppose either that every $P_i$ is a sieve or that every $P_i$ is a cosieve. Then we have an equivalence of $\infty$-categories
\[ \varinjlim\limits_{\emptyset \neq I \subset [n]} \left( \left( \bigcap_{i \in I} P_i \right) \times_P \sC \right) \xto{\sim} \sC \]
where the colimit is taken over the poset of nonempty subsets of $[n]$, ordered by reverse inclusion.
\end{thm}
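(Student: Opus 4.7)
The strategy is to express both sides of the claimed equivalence as colimits of the common ``elementary'' functor $y \mapsto \sC \times_P P_{\leq y}$ indexed over suitably chosen categories, and then compare them by a cofinality argument. I focus on the sieve case; the cosieve case is dual. By Prop.~\ref{prp:SieveCovering} applied to $\pi : \sC \to P$, the functor $F : \Cl(P) \to \Cat_{\infty}$, $Z \mapsto \sC \times_P Z$, is a left Kan extension along $x \mapsto P_{\leq x}$, so $F(Z) \simeq \varinjlim_{x \in Z} F(P_{\leq x})$ for every sieve $Z$. In particular, $\sC \simeq F(P) \simeq \varinjlim_{x \in P} F(P_{\leq x})$, and each $F\bigl(\bigcap_{i \in I} P_i\bigr) \simeq \varinjlim_{y \in \bigcap_{i \in I} P_i} F(P_{\leq y})$.

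Writing $\mathcal{I}$ for the poset of nonempty subsets of $[n]$ ordered by reverse inclusion and $D : \mathcal{I} \to \Cat_{\infty}$, $I \mapsto \bigcap_{i \in I} P_i$, iterating colimits (Fubini for colimits over a Grothendieck construction) identifies the left-hand side of the theorem with $\varinjlim_{(I, y) \in \mathcal{E}} F(P_{\leq y})$, where $\mathcal{E}$ is the Grothendieck construction of $D$: its objects are pairs $(I, y)$ with $y \in \bigcap_{i \in I} P_i$, and a morphism $(I, y) \to (I', y')$ exists iff $I \supseteq I'$ and $y \leq y'$ in $P$. It then suffices to show that the projection $\rho : \mathcal{E} \to P$, $(I, y) \mapsto y$, is cofinal, for this yields $\varinjlim_\mathcal{E} F(P_{\leq -}) \simeq \varinjlim_{x \in P} F(P_{\leq x}) \simeq \sC$. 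By Joyal's $\infty$-categorical Theorem A, cofinality reduces to verifying that for each $x \in P$ the slice $P_{x/} \times_P \mathcal{E}$ is weakly contractible; its objects are triples $((I, y), x \leq y)$ with $y \in \bigcap_{i \in I} P_i$. Since each $P_i$ is downward closed and $x \leq y$, the membership $y \in P_i$ forces $x \in P_i$, so $I \subseteq I(x) := \{i : x \in P_i\}$; because the $P_i$ cover $P$, $I(x)$ is nonempty, and then $(I(x), x, x \leq x)$ is an initial object of the slice, giving contractibility.

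The cosieve case is obtained by replacing $\Cl(P)$ with $\Op(P)$ and invoking the second half of Prop.~\ref{prp:SieveCovering}, which presents $Z \mapsto \sC \times_P Z$ as a left Kan extension from $P^{\op} \hookrightarrow \Op(P)$ via $x \mapsto P_{\geq x}$; after projecting to $P^{\op}$ throughout, the cofinality check dualizes (swap ``downward closed + $x \leq y$'' for ``upward closed + $x \geq y$'' when propagating $P_i$-membership), and the analogous element $(I(x), x)$ is again initial in the relevant slice. The only potentially delicate step is the Fubini/cofinality manipulation, but the explicit initial object in each slice makes both clean.
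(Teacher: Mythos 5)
Your proof is correct, and it reaches the conclusion by a genuinely different path than the paper. The paper's proof is more compressed: it defines the order-reversing map $\phi : P \to \sd([n])^{\op}$, $x \mapsto \{i : x \in P_i\}$, observes that $P \times_{\sd([n])^{\op}} (\sd([n])^{\op})_{\leq I} = \bigcap_{i \in I} P_i$, and then applies Prop.~\ref{prp:SieveCovering} once to the composite $\phi \circ \pi : \sC \to \sd([n])^{\op}$, which immediately yields the desired colimit formula. You instead apply Prop.~\ref{prp:SieveCovering} to $\pi : \sC \to P$ alone, express both sides as iterated colimits of $y \mapsto \sC \times_P P_{\leq y}$, reindex via Fubini over the Grothendieck construction $\mathcal{E} \to \mathcal{I}$, and finish with a cofinality argument (Quillen's Theorem A with the explicit initial object $(I(x),x)$ in each slice). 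The paper's route is more economical and avoids the Fubini and cofinality machinery, while yours makes the colimit manipulation explicit at the cost of more moving parts; it is worth noting that your cofinality datum $I(x)$ is exactly the value $\phi(x)$ of the paper's map, so the two arguments are not unrelated: you are essentially unwinding the pointwise left Kan extension formula that the paper's application of Prop.~\ref{prp:SieveCovering} to $\phi \circ \pi$ packages in one step. Both proofs are valid; the paper's is the one a reader should prefer for brevity, but your version does have pedagogical value in making the role of the nerve-of-the-cover index category concrete.
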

\begin{proof} We may suppose that every $P_i$ is a sieve, the cosieve case following from a dual argument. Let $\sd([n])$ be the poset of nonempty subsets of $[n]$, ordered by inclusion\footnote{Viewing $[n]$ itself as a poset, this is the usual barycentric subdivision of $[n]$.}. Define a map of sets $\phi: P \to \sd([n])^{\op}$ by
\[ \phi(x) = \{ i \in [n]: x \in P_i \}. \]
If $x \leq y$, then for every $i \in [n]$ such that $y \in P_i$, we necessarily have $x \in P_i$, so $\phi(y) \subset \phi(x)$. Therefore, $\phi$ is order-preserving, so $\phi$ is a functor. For $I \in \sd([n])^{\op}$, note that
\[ P \underset{\sd([n])^{\op}}{\times} (\sd([n])^{\op})_{\leq I} = \bigcap_{i \in I} P_i. \]

Applying Proposition \ref{prp:SieveCovering} to the composite functor $\phi \circ \pi$, we deduce the claim.
\end{proof}


\subsection{Recollement of sheaves on posets}


Having defined the local homology sheaf $\LL_P$, it is natural to ask about the functoriality properties of $\LL_{(-)}$ in the poset argument $P$. In this subsection, we study the functoriality of $\LL_{(-)}$ in the situation of a sieve inclusion $i: Q \to P$ with $P$ a finite poset (Proposition \ref{prp:shriekFunctoriality}). In the sequel, this material will only be used to clarify the difference between our notion of canonical stratification and other possible approaches (see \ref{shriekVsStar} below).

To properly articulate the relation between $\LL_Q$ and $\LL_P$, we will use the formalism of \emph{recollement} of sheaves. Suppose $X$ is a topological space, $i: Z \to X$ is the inclusion of a closed subspace and $j: U = X - Z \to X$ is the inclusion of its open complement. Then we have various functors between the categories of sheaves of sets on $X$, $Z$, and $U$. For example, we have the pushforward-pullback adjunctions
\[ \adjunct{i^\ast}{\Shv(X)}{\Shv(Z)}{i_\ast}, \quad \adjunct{j^\ast}{\Shv(X)}{\Shv(U)}{j_\ast}. \]
We also have a ``gluing'' relation: given a sheaf $\cF$ on $X$, we have a pullback square
\[ \begin{tikzcd}[row sep=4ex, column sep=4ex, text height=1.5ex, text depth=0.25ex]
\cF \ar{r} \ar{d} & i_\ast i^\ast \cF \ar{d} \\
j_\ast j^\ast \cF \ar{r} & i_\ast i^\ast j_\ast j^\ast \cF 
\end{tikzcd} \]
where the right vertical arrow is induced by a certain canonical map $i^\ast \cF \to i^\ast j_\ast j^\ast \cF$. In fact, the datum of a sheaf $\cF$ on $X$ amounts to the data of sheaves $\cF_Z$ on $Z$, $\cF_U$ on $U$, and a map of sheaves $\cF_Z \to i^\ast j_\ast \cF_U$. In this situation, we say that $\Shv(X)$ is a \emph{recollement} of $\Shv(U)$ and $\Shv(Z)$ (\cite[Def.~A.8.1]{HA}). See \cite[\S A.8]{HA}, \cite{BarwickGlasmanNoteRecoll}, or \cite[\S 2]{shah2022recollements} for general references on the theory of recollements in the setting of $\infty$-categories.

\begin{nul} We now replace $X$ by the poset $P$, $Z \to X$ by a sieve inclusion $i: Q \to P$, $U \to X$ by the complementary cosieve inclusion $j: R = P-Q \to P$, and the category of sets by a stable $\infty$-category $\sC$ with all limits and colimits (e.g., $\Sp$ or $\categ{D}(\ZZ)$). We want to show that $\Fun(P,\sC)$ is a recollement of $\Fun(Q, \sC)$ and $\Fun(R, \sC)$. To accomplish this task, we will apply \cite[Prop.~A.8.7]{HA} to a certain correspondence $\pi: \sM \to \Delta^1$. Let $p: P \to \Delta^1$ be the map of posets which sends $x \in Q$ to $0$ and $y \in R$ to $1$. Define $\pi: \sM \to \Delta^1$ to be the simplicial set given by the formula:
\begin{itemize}
	\item[($\ast$)] Maps of simplicial sets $K \to \sM$ over $\Delta^1$ are in bijection with maps $K \times_{\Delta^1} P \to \sC$, i.e.
	\[ \Hom_{/\Delta^1}(K,\sM) = \Hom(K \times_{\Delta^1} P, \sC). \]
\end{itemize}
Then $\sM$ is an $\infty$-category by \cite[Prop.~B.4.5]{HA} applied to the flat inner fibration $p: P \to \Delta^1$. Moreover, we have an identification of the fibers and sections of $\sM$:
 \[ \sM_0 \simeq \Fun(Q,\sC), \: \sM_1 \simeq \Fun(R, \sC), \: \Fun_{/\Delta^1}(\Delta^1, \sM) \simeq \Fun(P, \sC) \]
and $\pi$ is a cartesian fibration classified by the functor $(\Delta^1)^{\op} \to \Cat_{\infty}$ given by the composition
\[ \Fun(R, \sC) \xto{j_{\ast}} \Fun(P, \sC) \xto{i^{\ast}} \Fun(Q, \sC) \]
where $j_{\ast}$ denotes right Kan extension along $j$ and $i^\ast$ denotes restriction along $i$. Note that $i^\ast j_{\ast}$ is an exact functor because $\sC$ is stable. Therefore, $\sM$ is a left-exact correspondence (\cite[Def.~A.8.6]{HA}), which yields our desired recollement.
\end{nul}

In the situation of a recollement of stable $\infty$-categories, we have the diagram of adjunctions
\[ \begin{tikzcd}[row sep=4ex, column sep=8ex, text height=1.5ex, text depth=0.25ex]
\Fun(R, \sC) \ar[shift left=4]{r}{j_!} \ar[shift right=4]{r}[swap]{j_\ast} & \Fun(P, \sC) \ar{l}[description]{j^\ast} \ar[shift left=4]{r}{i^\ast} \ar[shift right=4]{r}[swap]{i^!} & \Fun(Q, \sC) \ar{l}[description]{i_\ast}
\end{tikzcd} \]
where $i^!$ is the \emph{exceptional inverse image} or ``shriek pullback'' functor, which fits into a fiber sequence of functors
\begin{equation} \label{shriekFiberSequence} i^! \to i^\ast \to i^\ast j_\ast j^\ast.
\end{equation}

See \cite[Obs.~2.18]{shah2022recollements}. Now let $\sC = \Sp$ and consider the local homology sheaf $\LL_P \in \Fun(P, \Sp)$. For all objects $x \in Q$, we have morphisms $\Sigma^\infty (|Q|/|Q- Q_{\geq x}|) \to \Sigma^{\infty} (|P|/|P - P_{\geq x}|)$ induced by the various inclusions of posets that assemble into a natural transformation $\LL_Q \to i^\ast \LL_P = (\LL_P)|_Q$. 

\begin{prp} \label{prp:shriekFunctoriality} Suppose $P$ is a finite poset. Then the map $\LL_Q \to i^\ast \LL_P$ factors through an equivalence $\LL_Q \xto{\sim} i^! \LL_P$.
\end{prp}
\begin{proof} We first compute $j_\ast ((\LL_P)|_R)$ on objects $x \in Q$. By the pointwise formula for right Kan extension, we have that
\[ (j_\ast ((\LL_P)|_R))(x) \simeq \varprojlim_{y \in R_{\geq x} \coloneq (R \times_P P_{\geq x})} (\LL_P)(y). \]

Since $R_{\geq x}$ is a cosieve in $P$, by Corollary \ref{cor:extendedOrientationSheaf} (which requires $P$ to be finite) the limit can be identified with
\[\widetilde{\LL_P}(R_{\geq x}) \coloneq \Sigma^\infty(|P|/|P- R_{\geq x}|), \]
and a chase of the definitions shows that the unit map $\LL_P \to j_\ast j^\ast \LL_P$ evaluated on $x$ is the map $\Sigma^\infty(|P|/|P- P_{\geq x}|) \to \Sigma^\infty(|P|/|P- R_{\geq x}|)$ induced by the inclusion $P-P_{\geq x} \subset P-R_{\geq x}$.

Upon passage to cofibers, the sequence of inclusions $P - P_{\geq x} \subset P - R_{\geq x} \subset P$ yields the fiber sequence
\[ \Sigma^\infty (|P - R_{\geq x}| / |P - P_{\geq x} |) \to \Sigma^\infty(|P|/|P- P_{\geq x}|) \to \Sigma^\infty(|P|/|P- R_{\geq x}|), \]
which calculates the fiber term $(i^! \LL_P)(x)$. Then applying Theorem \ref{thm:MayerVietoris} to the square of sieve inclusions
\[ \begin{tikzcd}[row sep=4ex, column sep=4ex, text height=1.5ex, text depth=0.25ex]
(P - (P_{\geq x} \cup R)) = (Q - Q_{\geq x})  \ar{r} \ar{d} & P - R = Q \ar{d} \\
P - P_{\geq x} \ar{r} & (P- (R \cap P_{\geq x})) = (P - R_{\geq x})
\end{tikzcd} \]
we see that 
\[ \begin{tikzcd}[row sep=4ex, column sep=4ex, text height=1.5ex, text depth=0.25ex]
\Sigma^\infty_+ |Q - Q_{\geq x}| \ar{r} \ar{d} & \Sigma^\infty_+ |Q| \ar{d} \\
\Sigma^\infty_+ |P - P_{\geq x}| \ar{r} & \Sigma^\infty_+ |P - R_{\geq x}|
\end{tikzcd} \]
is a pushout square, so the induced map of cofibers
\[ (\LL_Q)(x) = \Sigma^\infty (|Q|/|Q - Q_{\geq x}|) \to (i^! \LL_P)(x) = \Sigma^\infty (|P - R_{\geq x}| / |P - P_{\geq x} |)\]
is an equivalence (clearly natural in $x \in Q$), proving the claim.
\end{proof}

\section{$\delta$-structures on posets} \label{sec:deltaStructures}
To proceed with our program for stratifying posets $P$ via the local homology sheaf $\LL_P$, we need to constrain $P$ so that the notion of local neighborhood used in the definition of $\LL_P$ is reasonable (for example, invariant under subdivision). To do this, we will restrict our attention to those posets which arise as the poset of simplices of a simplicial complex; we call such posets \emph{$\delta$-admissible} (Definition \ref{dfn:deltaStructure}). To explicate the relevance of this condition and how it constrains the behavior of $\LL_P$, it will be convenient to introduce the formalism of discrete cartesian fibrations over $\Delta^{\inj}$.

\begin{ntn} Let $\Delta^{\inj}$ denote the subcategory of the simplex category $\Delta$ with the same objects and morphisms $[k] \to [l]$ taken to be the injective maps of totally ordered sets.
\end{ntn}

\begin{rec} \label{rec:cartesianFibration} Let us briefly recall the notion of \emph{cartesian fibration} that we use to formulate Definition \ref{dfn:deltaStructure}; see \cite[\S 2.4]{HTT} or \cite[\S 3]{BarShah} for a more systematic discussion. Let $\sX$ and $\sC$ be categories.\footnote{The restriction to ordinary categories is only for expository purposes: we could also take $\sX$, $\sC$ to be $\infty$-categories. In that case, we should also demand that $F: \sX \to \sC$ is an inner fibration and consider mapping spaces instead of hom sets.} Given a functor $F: \sX \to \sC$, we say that a morphism $f: y \to x$ in $\sX$ is a \emph{$F$-cartesian edge} if it enjoys the following lifting property: for every object $z \in \sX$, the commutative square of hom sets
\[ \begin{tikzcd}[row sep=4ex, column sep=4ex, text height=1.5ex, text depth=0.25ex]
\Hom_{\sX}(z,y) \ar{r}{f_{\ast}} \ar{d}{F} & \Hom_{\sX}(z,x) \ar{d}{F} \\
\Hom_{\sC}(F(z), F(y)) \ar{r}{F(f)_{\ast}} & \Hom_{\sC}(F(z), F(x)) 
\end{tikzcd} \]
is a pullback square. We say that $F$ is a \emph{cartesian fibration} if for every morphism $\alpha: d \to c$ in $\sC$ and $x \in \sX$ such that $F(x) = c$, there exists a cartesian edge $f: y \to x$ with $F(f) = \alpha$. $F$ is moreover \emph{discrete} if its fibers are all equivalent to sets.

Dually, $F$ is a cocartesian fibration if $F^{\op}: \sC^{\op} \to \sD^{\op}$ is a cartesian fibration.
\end{rec}

We recall the basic factorization property of cartesian fibrations:

\begin{rem} \label{rem:cart_edge_factorization}
Let $F: \sX \to \sC$ be a cartesian fibration. Then given any edge $[e: z \to x]$ lying over an edge $[\alpha = F(e): d \to c]$, there exists a factorization $[z \xto{e'} y \xto{e''} x]$ of $e$ in which $e''$ is a $F$-cartesian edge lying over $\alpha$ and $e'$ lies in the fiber $\sX_{d}$. Moreover, this factorization is unique up to equivalence.
\end{rem}

\begin{rem} If $\delta: \sC \to \Delta^{\inj}$ is a discrete cartesian fibration, then every morphism in $\sC$ is necessarily a $\delta$-cartesian edge. Indeed, this follows immediately from Remark \ref{rem:cart_edge_factorization}.
\end{rem}

\begin{ntn} \label{ntn:simplexFace} Suppose $\delta: \sC \to \Delta^{\inj}$ is a discrete cartesian fibration (with $\sC$ a category, not necessarily a poset). Given an injective map of totally ordered sets $\alpha: [k] \to [n]$ with image $I \coloneq \alpha([k])$ in $[n]$, and $x \in P$ such that $\delta(x) = [n]$, let $\alpha^{\ast}(x)$ or $x_I$ denote the source object of the $\delta$-cartesian edge $\alpha^{\ast}(x) = x_I \to x$ covering $\alpha$, which is prescribed by the lifting property of $\delta$.
\end{ntn}

\begin{dfn} \label{dfn:deltaStructure} Let $P$ be a poset. A \emph{$\delta$-structure} on $P$ is the data of a functor $\delta: P \to \Delta^{\inj}$ that is a discrete cartesian fibration, along with the additional injectivity hypothesis:
\begin{itemize} \item[($\ast$)] If $x \in P$ with $\delta(x) = [n]$ and $\alpha, \beta: [k] \to [n]$ are two distinct maps in $\Delta^{\inj}$, then $\alpha^{\ast}(x) \neq \beta^{\ast}(x)$.
\end{itemize}

 $P$ is said to be \emph{$\delta$-admissible} if it admits a $\delta$-structure.
\end{dfn}

Let us pause to unwind Definition \ref{dfn:deltaStructure}. Let $\delta: P \to \Delta^{\inj}$ be a functor whose fibers are sets. Then the condition that $\delta$ be a cartesian fibration amounts to the following: for every $x \in P$ such that $\delta(x) = [n]$ and every injective map of totally ordered sets $\alpha: [k] \to [n]$, there exists a unique $\alpha^{\ast}(x) \leq x$ covering $\alpha$ such that for every $y \leq x$ covering $\gamma: [i] \to [n]$ with $\gamma([i]) \subset \alpha([k])$, we have that $y \leq \alpha^{\ast}(x)$.

We can also understand Definition \ref{dfn:deltaStructure} in terms of the equivalence between functors $X: (\Delta^{\inj})^{\op} \to \Set$ (i.e., semisimplicial sets) and discrete cartesian fibrations $\delta: \sC \to \Delta^{\inj}$ implemented by the Grothendieck construction. In this context, that equivalence is explicitly given as follows:
\begin{enumerate}
	\item Given a semisimplicial set $X$, define $\sC$ to be the \emph{category of simplices} of $X$. The objects of $\sC$ are given by $x \in X([n])$, while the morphisms $y \to x$, $x \in X([n])$ and $y \in X([k])$, are in bijective correspondence with $\alpha: [k] \to [n]$ such that $X(\alpha)(x) = y$. $\sC$ admits an obvious functor to $\Delta^{\inj}$ which is seen to be a discrete cartesian fibration.
	\item Conversely, given a discrete cartesian fibration $\sC \to \Delta^{\inj}$, define a semisimplicial set $X$ objectwise by $X([n]) = C_{[n]}$, and for every injective map of totally ordered sets $\alpha: [k] \to [n]$, define $X(\alpha): X([n]) \to X([k])$ by $X(\alpha)(x) = \alpha^{\ast}(x)$. 
\end{enumerate}

Under this correspondence, we say that the functor $X$ \emph{classifies} the cartesian fibration $\delta$.

Given $X$, its category of simplices is generally not a poset. Rather, we can refine the Grothendieck correspondence to one between $\delta$-structures $\delta: P \to \Delta^{\inj}$ and semisimplicial sets $X$ which satisfy the following additional condition:
\begin{itemize}
	\item[($\ast$)] For every $x \in X([n])$, the corresponding map $x: \Delta^n \to X$ is objectwise injective.\footnote{Here, $\Delta^n$ denotes the semisimplicial set which is the image of $[n]$ under the Yoneda embedding $\into{\Delta^{\inj}}{\Fun((\Delta^{\inj})^{\op}, \Set)}$, as is standard. Then maps $\Delta^n \to X$ are in bijection with elements of $X([n])$.}
\end{itemize}

\begin{exm} \label{exm:correspondence_delta_str} Let $K$ be an abstract simplicial complex with vertex set $V$ and let $P$ be its poset of simplices. Choosing an ordering of $V$, we can regard $K$ as a semisimplicial set and thereby produce a $\delta$-structure on $P$ by identifying $P$ with the category of simplices of $K$. Therefore, $P$ is $\delta$-admissible. In fact, we do not need to choose a global ordering of $V$ to see this; it is clear that the choice of $\delta$-structure on $P$ amounts to a compatible local ordering of the simplices of $K$. Conversely, if $P$ is $\delta$-admissible, then we may define an abstract simplicial complex $K$ with $P$ as its poset of simplices.
\end{exm}

\begin{exm} \label{exm:ssetsToPosets} Let $K$ be a simplicial set. Then the procedure of \cite[Var.~4.2.3.15]{HTT} produces a cofinal map $\phi: N(P) \to K$ that depends functorially on $K$, such that $P$ is a $\delta$-admissible poset.\footnote{The idea of Lurie's construction is to first consider the category of simplices $\Delta_{/K}$ together with the \emph{last vertex} map $\Delta_{/K} \to K$ that sends an $n$-simplex $\sigma \in K_n$ to $\sigma(n)$; this is cofinal by \cite[Prop.~4.2.3.14]{HTT}. One then modifies $\Delta_{/K}$ in a few clever ways to obtain a poset $P$ along with a cofinal $N(P) \to \Delta_{/K}$; the remaining problem is essentially to get around the presence of degenerate simplices in $\Delta_{/K}$. More precisely, $N(P)$ is the category of \emph{nondegenerate} simplices in the category $\Delta_{/(\Delta_{/K})} \times_{\Delta} \Delta^{\inj}$, where $\Delta_{/(\Delta_{/K})}$ denotes the category of simplices in $\Delta_{/K}$. By definition, $P$ then comes equipped with a map $\delta: P \to \Delta^{\inj}$ that constitutes a $\delta$-structure.} Furthermore, if $K$ is finite, then the procedure of \cite[Var.~4.2.3.16]{HTT} produces a cofinal map $\phi: N(P) \to K$ with $P$ a finite $\delta$-admissible poset; however, this construction is not functorial in $K$.

Recalling that if a map is cofinal then it is a weak homotopy equivalence (\cite[Prop.~4.1.1.3(3)]{HTT}), we see that the weak homotopy types of $\delta$-admissible posets encompass all spaces, with matching finiteness conditions.
\end{exm}

\begin{exm} In Example \ref{exm:ssetsToPosets}, if $K = N(Q)$ is itself the nerve of a poset, then the poset $P$ can be taken to be the subdivision $\sd(Q)$, defined to be the poset whose objects are $n$-chains $\sigma = [x_0 < \ldots < x_n]$ in $Q$ (i.e., non-degenerate simplices of $N(Q)$) with $\sigma \leq \tau$ if and only if $\sigma$ is a subchain of $\tau$. Then we have the functor $\delta: \sd(Q) \to \Delta^{\inj}$ given by the pullback of the structure map $\Delta_{/Q} \to \Delta$ of the category of simplices\footnote{$\Delta_{/Q}$ is the category of simplices of $Q$ regarded as a simplicial set $N(Q)$. $\sd(Q)$ is then the category of simplices of $Q$ regarded as a semisimplicial set, forgetting the degeneracies.} of $Q$, which is a discrete cartesian fibration, so $\sd(Q)$ is $\delta$-admissible. Define the ``last vertex'' map $\phi: \sd(Q) \to Q$ by $\phi([x_0 < \ldots < x_n]) = x_n$. Then $\phi$ is cofinal by the first argument of \cite[Var.~4.2.3.16]{HTT} (which doesn't use finiteness); the point is that we have a factorization $\sd(Q) \subset \Delta_{/Q} \to N(Q)$ of $\phi$ with the first map right adjoint to a retraction $\Delta_{/Q} \to \sd(Q)$ and the second map the cofinal map of \cite[Prop.~4.2.3.14]{HTT}. Moreover, the subdivision $\sd(-)$ is manifestly functorial in its argument.
\end{exm}

We also have the notion of dimension for posets and objects of posets.

\begin{dfn} \label{dfn:dimension} The \emph{dimension} of an object $x \in P$ is defined to be
\[ \dim(x) = \max\{n \: | \: \text{there exists a chain } x_0 < x_1 < ... < x_{n-1} < x_n = x \text{ in } P \}. \]
We will sometimes disambiguate $\dim(x)$ as $\dim_P(x)$ and refer to the \emph{$P$-dimension} of $x$.

The \emph{dimension} of $P$ is defined to be $\dim(P) = \max\{ \dim(x) \: | \: x \in P \}$ for $P$ nonempty (and equals $-1$ if $P$ is empty). $P$ is said to be finite-dimensional if $\dim(P) < \infty$.
\end{dfn}

Note that for any $\delta$-structure on $P$, $\delta(x) = [n]$ if and only if $\dim(x) = n$, and for $P$ finite-dimensional $\dim(P)$ equals the maximum $n$ such that $[n]$ is in the image of $\delta$. Every $\delta$-structure on $P$ thus knows the dimension of objects in $P$. However, we advise the reader not to think of the $\delta$-structure itself as a dimension function on $P$ (since the notion of dimension in $P$ doesn't vary under change of $\delta$-structure). Rather, the $\delta$-structure records face assignment under the correspondence of Example \ref{exm:correspondence_delta_str}.

\begin{ntn} In view of the fact that the objects of a $\delta$-admissible poset $P$ are simplices in a simplicial complex, or alternatively a semisimplicial set, we will henceforth typically denote objects of $P$ by $\sigma, \tau$, etc. rather than $x, y$, etc. If $\sigma \leq \tau$, then we will on occasion call $\sigma$ a \emph{face} of $\tau$ and $\tau$ a \emph{coface} of $\sigma$. 
\end{ntn}

\subsection{Links}

In this subsection, suppose that $P$ is a $\delta$-admissible poset. Our main result (Theorem \ref{thm:shiftingLink}) relates the value of the space-valued sheaf $\LL_P^{\sS}$ at any $\sigma \in P$ to the geometric realizations of two other subposets of $P$: the link and the small link of $\sigma \in P$.

\begin{ntn} Given a subposet $Q \subset P$, let $\overline{Q} \subset P$ denote the minimal sieve in $P$ containing $Q$. Viewing $P$ as a topological space, $\overline{Q}$ is the closure of $Q$ in $P$.
\end{ntn}

\begin{dfn} \label{dfn:link} The \emph{link} of $\sigma \in P$ is the subposet $\overline{P_{\geq \sigma}} - P_{\geq \sigma}$. The \emph{small link} of $\sigma \in P$ is the subposet $P_{>\sigma}$.\footnote{In the literature, authors sometimes refer to this subposet as the link of $\sigma$. We do not know of standard terminology which distinguishes between these two notions of link.}
\end{dfn}

To orient the reader, it may help to note that the link is the combinatorial analogue of the concept of punctured tubular neighborhood from manifold theory.

\begin{nul} \label{basicLinkRelation} The relevance of the link for us is that the unreduced suspension (Recollection \ref{rec:unreducedSuspension}) of the link of $\sigma$ is homotopy equivalent to $\LL_P^{\sS}(\sigma)$. To see this, first note that by Theorem \ref{thm:MayerVietoris} applied to the sieve covering $\left\{ \overline{P_{\geq \sigma}}, P - P_{\geq \sigma} \right\}$ of $P$, we have a pushout square of posets
\[ \begin{tikzcd}[row sep=4ex, column sep=4ex, text height=1.5ex, text depth=0.25ex]
\overline{P_{\geq \sigma}} - P_{\geq \sigma} \ar{r} \ar{d} & \overline{P_{\geq \sigma}} \ar{d} \\
P - P_{\geq \sigma} \ar{r} & P.
\end{tikzcd} \]
Upon geometric realization and using that $\left| \overline{P_{\geq \sigma}} \right| \simeq \ast$ by Lemma \ref{lm:cofinality}(1) below, we obtain a pushout square of spaces
\[ \begin{tikzcd}[row sep=4ex, column sep=4ex, text height=1.5ex, text depth=0.25ex]
\left| \overline{P_{\geq \sigma}} - P_{\geq \sigma} \right| \ar{r} \ar{d} & \ast \ar{d} \\
\left| P - P_{\geq \sigma} \right| \ar{r} & \left| P \right|.
\end{tikzcd} \]
Taking cofibers, we see that $S^1(| \overline{P_{\geq \sigma}} - P_{\geq \sigma} |) \simeq \LL_P^{\sS}(\sigma)$.
\end{nul}

There is a more subtle relation between the small link of $\sigma$ and the link of $\sigma$ (Theorem \ref{thm:shiftingLink}), which will occupy our attention in the remainder of this subsection. To prepare for the proof of Theorem \ref{thm:shiftingLink}, we need to introduce some auxiliary subposets. Fix a $\delta$-structure on $P$ and $\sigma \in P$ with $\delta(\sigma) = [n]$. Recalling Notation \ref{ntn:simplexFace}, define for every subset $I \subset [n]$ the subposet
\[ A_{I} \coloneq \{ \tau \in \overline{P_{\geq \sigma}} - P_{\geq \sigma} | \: \sigma_{\{j\}} \nleq \tau \text{ for all } j \notin I \} \subset \overline{P_{\geq \sigma}} - P_{\geq \sigma}. \]

We will use the posets $A_I$ to interpolate between the small link $P_{> \sigma}$ and the link $\overline{P_{\geq \sigma}} - P_{\geq \sigma}$ (cf. Lemmas~\ref{lem:PosetSuspension} and \ref{lm:slidingAbovePoset}). First note that for a proper nonempty subset $I \subset [n]$, $\sigma_I \in A_I$. Let $B_I \subset A_I$ be the cosieve in $A_I$ generated by $\sigma_I$. 

\begin{lem} \label{lm:cofinality} \begin{enumerate} \item The inclusion $i: P_{\geq \sigma} \to \overline{P_{\geq \sigma}}$ is cofinal. In particular, $\overline{P_{\geq \sigma}}$ is weakly contractible.
\item For every proper nonempty $I \subset [n]$, the inclusion $i_I: B_I \to A_I$ is cofinal. In particular, $A_I$ is weakly contractible.
\end{enumerate}
\end{lem}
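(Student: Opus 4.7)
My plan is to apply Quillen's Theorem A in its $\infty$-categorical form (\cite[Thm.~4.1.3.1]{HTT}) to both inclusions, by exhibiting an initial object in each relevant slice. In both cases that initial object will be a ``smallest common coface,'' whose existence rests on the $\delta$-admissibility of $P$. Specifically, via the Grothendieck correspondence between $\delta$-structures and objectwise-injective semisimplicial sets, $P$ is realized as the poset of simplices of an ordered abstract simplicial complex, and each simplex $\rho \in P$ is rigidly determined by its vertex set $V(\rho) = \{\rho_{\{0\}}, \ldots, \rho_{\{\dim \rho\}}\} \subseteq P_0 \coloneq \delta^{-1}([0])$.

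For (1), Quillen A reduces cofinality of $i$ to weak contractibility of the slice $S_\tau \coloneq P_{\geq \sigma} \times_{\overline{P_{\geq \sigma}}} (\overline{P_{\geq \sigma}})_{\tau/}$, which I identify with the subposet of common cofaces of $\sigma$ and $\tau$ in $P$. By the very definition of $\overline{P_{\geq \sigma}}$, $S_\tau$ is nonempty; picking any $\rho \in S_\tau$, the injectivity axiom ($\ast$) presents $\sigma$ and $\tau$ as faces of $\rho$ indexed by subsets $I_\sigma, I_\tau \subseteq [\dim \rho]$, and I set $\mu \coloneq \rho_{I_\sigma \cup I_\tau}$. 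I then argue that $\mu$ depends only on $\sigma$ and $\tau$: its vertex set is $V(\sigma) \cup V(\tau)$, and any other $\rho' \in S_\tau$ must contain this set among its vertices, yielding the same element of $P$. Consequently $\mu \leq \rho'$ for every $\rho' \in S_\tau$, so $\mu$ is initial in $S_\tau$, the slice is weakly contractible, and $i$ is cofinal. Weak contractibility of $\overline{P_{\geq \sigma}}$ then follows from $|P_{\geq \sigma}| \simeq \ast$ via the initial object $\sigma$.

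For (2), I apply the same blueprint with $\sigma$ replaced by $\sigma_I$, managing the additional constraints defining $A_I$. Fix $\tau \in A_I$; the slice $B_I \times_{A_I} (A_I)_{\tau/}$ consists of $\tau' \in \overline{P_{\geq \sigma}} - P_{\geq \sigma}$ satisfying $\sigma_I \leq \tau' \geq \tau$ and $\sigma_{\{j\}} \nleq \tau'$ for all $j \notin I$. Since $\tau \in \overline{P_{\geq \sigma}}$ it shares a common coface with $\sigma$, hence also with $\sigma_I$, so the construction from (1) produces $\mu \coloneq \sigma_I \cup \tau$ with $V(\mu) = V(\sigma_I) \cup V(\tau)$. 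Neither $V(\sigma_I)$ nor $V(\tau)$ contains $\sigma_{\{j\}}$ for $j \notin I$, so the same holds for $V(\mu)$; this places $\mu \in A_I$ and, together with $I \subsetneq [n]$, witnesses $V(\sigma) \nsubseteq V(\mu)$, so $\mu \notin P_{\geq \sigma}$. Combined with $\sigma_I \leq \mu$, this shows $\mu \in B_I$, and the minimality argument from (1) makes $\mu$ the initial object of the slice. Cofinality of $i_I$ follows, and weak contractibility of $A_I$ follows from $|B_I| \simeq \ast$ via the initial object $\sigma_I$.

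The main technical hurdle is the well-definedness of the candidate initial object $\mu$ as an absolute element of $P$, rather than merely as a face of a chosen ambient coface. This is precisely the content of the rigid correspondence between $\delta$-admissible posets and ordered abstract simplicial complexes, and ultimately reduces, via the injectivity axiom ($\ast$), to the observation that a simplex in $P$ is determined by its vertex subset. Once this is secured, both halves of the lemma reduce to bookkeeping with vertex sets.
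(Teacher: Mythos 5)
Your argument is essentially the paper's, just unwound slice by slice: you construct the same minimal common coface $\mu$ and exhibit it as the initial object of each slice appearing in Quillen's Theorem A, whereas the paper packages the whole family $\tau \mapsto \mu$ into a retraction $r: \overline{P_{\geq \sigma}} \to P_{\geq \sigma}$ and observes that $r$ is left adjoint to $i$, which is equivalent to the assertion that every such slice has an initial object. So there is no real difference in route.

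One subtle point in your well-definedness reduction is worth flagging. You assert that a simplex of $P$ is determined by its vertex subset ``via the injectivity axiom $(\ast)$,'' but that axiom only prevents distinct faces \emph{of a single fixed simplex} from sharing a vertex set; it does not prevent two distinct simplices of $P$ from having the same vertex set. For instance, take $P = \{a,b,e_0,e_1\}$ with $a,b < e_0$, $a,b < e_1$ (and $e_0, e_1$ incomparable), with $\delta$ sending $a,b \mapsto [0]$ and $e_0,e_1 \mapsto [1]$ (a ``doubled edge''). This is a $\delta$-structure satisfying $(\ast)$, yet $e_0$ and $e_1$ share the vertex set $\{a,b\}$; for $\sigma = a$ the slice over $b \in \overline{P_{\geq a}}$ is the discrete poset $\{e_0, e_1\}$ with no initial object, and indeed $|\overline{P_{\geq a}}| \simeq S^1$ while $|P_{\geq a}| \simeq \ast$, so the lemma genuinely fails for this poset. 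The paper's own proof carries the same tacit assumption, since ``the minimal $\tau'$'' defining $r$ need not exist; the converse direction of the paper's discussion following Def.~\ref{dfn:deltaStructure}, claiming every $\delta$-admissible poset is the face poset of an abstract simplicial complex, is slightly overstated. Under the intended hypothesis that $P$ actually is such a face poset --- so that a simplex is recovered from its vertex set --- both your argument and the paper's are correct.
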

\begin{proof} We will prove all of these cofinality claims by constructing retractions which are left adjoint to the inclusions and then invoking Quillen's Theorem A (\cite[Thm.~4.1.3.1]{HTT})\footnote{The $\infty$-categorical version of ``Quillen's Theorem A'' is due to Andr\'{e} Joyal.}, which implies that right adjoints are cofinal. The weak contractibility claim then follows because cofinal maps are weak homotopy equivalences and $P_{\geq \sigma}$ and $B_I$ are weakly contractible as they have initial objects.

(1): Let $r: \overline{P_{\geq \sigma}} \to P_{\geq \sigma}$ be the map which sends $\tau$ to the minimal $\tau'$ such that $\tau \leq \tau'$ and $\sigma \leq \tau'$. Then $r \circ i = \id$ and $r$ is left adjoint to $i$: the minimality of $r(\tau)$ precisely means that $r(\tau) \leq \kappa$ for any $\kappa \geq \sigma$ if and only if $\tau \leq \kappa$.

(2): Let $r_I: A_I \to B_I$ be the map which sends $\tau$ to the minimal $\tau'$ such that $\tau \leq \tau'$ and $\sigma_I \leq \tau'$; this is well-defined by our assumption that $\tau$ is in the sieve generated by $P_{\geq \sigma}$. Then by the same argument as in (1), $r_I$ is left adjoint to $i_I$.
\end{proof}



Let $\cP([n])$ be the poset of subsets of $[n]$ with the partial order given by inclusion. Note that if $I \subset I'$, then $A_I \subset A_{I'}$. We can thus define a functor $F: \cP([n]) \to \Cat_{\infty}$ by $F(I) = A_I$.

\begin{lem} \label{lem:PosetSuspension} $F: \cP([n]) \to \Cat_{\infty}$ is a colimit diagram, i.e. the canonical map
\[ \varinjlim_{I \subsetneq [n]} A_I \to \overline{P_{\geq \sigma}} - P_{\geq \sigma} \]
is an equivalence.
\end{lem}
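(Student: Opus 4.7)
The plan is to apply the descent result Thm.~\ref{thm:MayerVietoris} to a suitable sieve cover of $\overline{P_{\geq \sigma}} - P_{\geq \sigma}$ by the $A_I$'s, with the indexing handled by a change of variables $I \leftrightarrow [n] \setminus I$.

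First I would check that each $A_I$ is in fact a sieve in $\overline{P_{\geq \sigma}} - P_{\geq \sigma}$: if $\tau \in A_I$ and $\tau' \leq \tau$, then $\sigma_{\{j\}} \leq \tau'$ would force $\sigma_{\{j\}} \leq \tau$, so the condition ``$\sigma_{\{j\}} \nleq \tau$ for all $j \notin I$'' passes to faces. Second, I would single out the corank-one pieces $A_{[n] \setminus \{i\}}$ for $i \in [n]$ and show they cover $\overline{P_{\geq \sigma}} - P_{\geq \sigma}$. This is the only step that uses $\delta$-admissibility essentially: for $\tau \in \overline{P_{\geq \sigma}} - P_{\geq \sigma}$ we have $\sigma \nleq \tau$, and I need to conclude that at least one vertex $\sigma_{\{i\}}$ fails to be $\leq \tau$. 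Equivalently, I need the combinatorial fact that if all of $\sigma_{\{0\}},\ldots,\sigma_{\{n\}}$ are faces of $\tau$, then $\sigma \leq \tau$. Using the Grothendieck presentation of a $\delta$-structure, the vertices $\sigma_{\{i\}}$ assemble into an injective map $\alpha \colon [n] \to \delta(\tau)$, and the injectivity hypothesis $(\ast)$ of Def.~\ref{dfn:deltaStructure} together with uniqueness of $\delta$-cartesian lifts forces $\alpha^{\ast}(\tau) = \sigma$.

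Third, I would compute the multiple intersections directly from the definition: for any nonempty $J \subset [n]$,
\[ \bigcap_{i \in J} A_{[n] \setminus \{i\}} = \{ \tau \in \overline{P_{\geq \sigma}} - P_{\geq \sigma} : \sigma_{\{i\}} \nleq \tau \text{ for all } i \in J \} = A_{[n] \setminus J}. \]
Fourth, applying Thm.~\ref{thm:MayerVietoris} with $\sC = P = \overline{P_{\geq \sigma}} - P_{\geq \sigma}$ and the sieve cover $\{A_{[n] \setminus \{i\}}\}_{i \in [n]}$ produces the equivalence
\[ \varinjlim_{\emptyset \neq J \subset [n]} A_{[n] \setminus J} \xto{\sim} \overline{P_{\geq \sigma}} - P_{\geq \sigma} \]
with the colimit indexed by nonempty subsets ordered by reverse inclusion. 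Finally I would reindex by $I = [n] \setminus J$: this is a bijection between nonempty subsets of $[n]$ and proper subsets of $[n]$, and it exchanges reverse inclusion with standard inclusion, yielding the stated equivalence and identifying it with the canonical map out of $F$ restricted to $\cP([n]) \setminus \{[n]\}$.

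The only genuine content is the covering step, which depends on the delta-admissibility hypothesis; everything else is an unwinding of definitions followed by a direct appeal to Thm.~\ref{thm:MayerVietoris}. I do not anticipate any subtle obstructions beyond being careful about the reindexing conventions between nonempty and proper subsets.
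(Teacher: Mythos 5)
Your proof is correct and takes essentially the same route as the paper's, which applies Thm.~\ref{thm:MayerVietoris} to the cover $\{A_{X_j}\}_{j\in[n]}$ (with $X_j = [n]\setminus\{j\}$) together with the observation that $A_I = \bigcap_{j\notin I} A_{X_j}$. You additionally spell out the verification that this collection actually covers $\overline{P_{\geq\sigma}} - P_{\geq\sigma}$ — the one step genuinely using $\delta$-admissibility — which the paper leaves implicit, and your reindexing between nonempty and proper subsets is handled correctly.
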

\begin{proof} The following two facts are immediate from the definitions:
\begin{enumerate}
	\item The $A_I$ are sieves inside $\overline{P_{\geq \sigma}} - P_{\geq \sigma}$.
	\item Let $X_j \subset [n]$ be the subset excluding $j$. Then $A_I = \bigcap_{j \notin I} A_{X_j}$.
\end{enumerate}
The claim now follows from Theorem \ref{thm:MayerVietoris} applied to the identity functor and the cover of $\overline{P_{\geq \sigma}} - P_{\geq \sigma}$ by the collection of sieves $\{ A_{X_j}: \: j \in [n] \}$.
\end{proof}

\begin{lem} \label{lm:slidingAbovePoset} We have an isomorphism of posets $P_{> \sigma} \cong A_{\emptyset}$.
\end{lem}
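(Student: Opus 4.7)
The plan is to build an explicit bijection $\phi : P_{>\sigma} \xto{\sim} A_\emptyset$ from the $\delta$-structure and show it is a poset isomorphism. Let $\delta(\sigma) = [n]$. For $\tau \in P_{>\sigma}$ with $\delta(\tau) = [m]$, let $\alpha_\tau : [n] \to [m]$ be the injection covering the unique arrow $\sigma \leq \tau$, and write $J_\tau \coloneq \alpha_\tau([n]) \subset [m]$. I would define $\phi(\tau) \coloneq \tau_{[m] \setminus J_\tau}$, the face of $\tau$ complementary to $\sigma$. To check $\phi(\tau) \in A_\emptyset$: clearly $\phi(\tau) \in \overline{P_{\geq \sigma}}$ via $\phi(\tau) \leq \tau \geq \sigma$, and if any $\sigma_{\{j\}} \leq \phi(\tau)$ held (in particular if $\sigma \leq \phi(\tau)$), then factoring the $\delta$-cartesian edge $\sigma_{\{j\}} \leq \tau$ (which covers $\{\alpha_\tau(j)\} \subset J_\tau$) through $\phi(\tau) \leq \tau$ (which covers $[m] \setminus J_\tau \hookrightarrow [m]$) would place $\alpha_\tau(j) \in [m] \setminus J_\tau$, a contradiction. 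Functoriality is routine: if $\tau \leq \tau'$ covers $\gamma : [m] \to [m']$, then $\alpha_{\tau'} = \gamma \circ \alpha_\tau$ so $J_{\tau'} = \gamma(J_\tau)$, and injectivity of $\gamma$ realizes $\phi(\tau) \leq \phi(\tau')$ as a face relation.

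For the inverse $\psi : A_\emptyset \to P_{>\sigma}$, I take the restriction of the retraction $r$ of Lemma \ref{lm:cofinality}(1): $\psi(\rho)$ is the minimal $\tau \in P_{\geq \sigma}$ with $\rho \leq \tau$. The image genuinely lies in $P_{>\sigma}$ rather than merely $P_{\geq \sigma}$, since $\psi(\rho) = \sigma$ would give $\rho \leq \sigma$ and hence $\sigma_{\{\beta(0)\}} \leq \rho$ for $\beta : [k] \to [n]$ covering $\rho \leq \sigma$, contradicting $\rho \in A_\emptyset$. To verify $\phi \circ \psi = \id$, let $\tau = \psi(\rho)$ with $\delta(\tau) = [m]$ and let $I \subset [m]$ be the image of the injection covering $\rho \leq \tau$. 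The $A_\emptyset$ condition forces $I \cap J_\tau = \emptyset$ by the same cartesian factorization argument used for $\phi$, while the defining minimality of $\tau$ forces $I \cup J_\tau = [m]$: otherwise $\tau_{I \cup J_\tau}$ would be a strictly smaller common coface of $\sigma$ and $\rho$. Hence $I = [m] \setminus J_\tau$ and $\rho = \tau_I = \phi(\tau)$. For $\psi \circ \phi = \id$, set $\tau' = \psi(\phi(\tau))$; the adjunction characterization of $r$ gives $\tau' \leq \tau$, and the already-proven identity $\phi(\tau') = \phi(\tau)$ together with the dimension count $\dim \phi(\tau') = \dim \tau' - n - 1$ forces $\delta(\tau') = [m] = \delta(\tau)$. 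Thus $\tau' \leq \tau$ is a $\delta$-cartesian lift of $\id_{[m]}$; since $\id_\tau$ is another such lift, uniqueness of cartesian lifts in a poset yields $\tau' = \tau$.

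The principal obstacle I anticipate is the minimality argument that $I \cup J_\tau = [m]$: one must invoke the discrete cartesian fibration structure to produce $\tau_{I \cup J_\tau}$ as a genuine element of $P$ dominating both $\sigma$ and $\rho$, which then contradicts the minimality of $\tau = r(\rho)$ unless $I \cup J_\tau$ exhausts $[m]$. This is the step at which the hypothesis that $\delta$ is a discrete cartesian fibration (together with its injectivity clause ($\ast$)) does the essential work, since it guarantees that every subset of $[m]$ indexes a bona fide face of $\tau$ and that these faces are distinct elements of the poset $P$.
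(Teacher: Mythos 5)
Your proof is correct and follows essentially the same route as the paper's: the forward map $\phi(\tau) = \tau_{[m]\setminus J_\tau}$ is exactly the paper's $f(\kappa) = (\iota^c)^*(\kappa)$, and the inverse $\psi$ is the paper's $g$, the minimal common coface. The only difference is that you carefully verify that $\phi$ and $\psi$ are mutually inverse (via the $I \cap J_\tau = \emptyset$, $I \cup J_\tau = [m]$ argument and the dimension count), a step the paper leaves as ``easy to check''; your spelled-out version is correct.
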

\begin{proof} Given a proper inclusion $\iota: [n] \subset [m]$, let $\iota^c: [m-n-1] \subset [m]$ denote the complementary inclusion to $\iota$. For any $\kappa \geq \sigma$, let $\iota(\kappa) = \delta(\sigma \to \kappa)$. Define a map of sets $f: P_{> \sigma} \to A_{\emptyset}$ by $f(\kappa) = (\iota^c)^\ast(\kappa)$. If $\kappa \leq \kappa'$ with $\kappa$, $\kappa'$ of dimension $m$, $m'$ respectively, then we have a factorization
\[ \begin{tikzcd}[row sep=4ex, column sep=4ex, text height=1.5ex, text depth=0.25ex]
\left[ m-n-1 \right] \ar{r}{\beta} \ar{d}{\iota(\kappa)^c} & \left[ m'-n-1 \right] \ar{d}{\iota(\kappa')^c} \\
\left[ m \right] \ar{r}{\alpha} & \left[ m' \right]
\end{tikzcd} \]
where $\alpha^\ast(\kappa') = \kappa$, so
\[ f(\kappa) = (\alpha \circ \iota(\kappa)^c)^\ast (\kappa') = (\iota(\kappa')^c \circ \beta)^\ast(\kappa') = \beta^\ast (f(\kappa')), \]
hence $f(\kappa) \leq f(\kappa')$ and $f$ is a map of posets.

In the reverse direction, define a map of posets $g: A_{\emptyset} \to P_{> \sigma}$ sending $\tau$ to the minimum $\tau'$ such that $\tau \leq \tau'$ and $\sigma \leq \tau'$. Then it is easy to check that $f$ and $g$ are inverse to each other.
\end{proof}
\begin{rec} \label{rec:unreducedSuspension} Let $X \in \sS$ be a space and let $n>0$ be an integer. The \emph{$n$th unreduced suspension} $S^n(X)$ of $X$ is defined to be the colimit of the functor $F: \cP([n])_{<[n]} \to \sS$, $F(\emptyset) = X$ and $F(I) = \ast$ for every nonempty proper subset $I \subset [n]$; the functor $F$ can be precisely defined as the right Kan extension of $X: \{\emptyset\} \to \sS$ along the inclusion $\{\emptyset\} \subset \cP([n])_{<[n]}$.

If $X = \emptyset$, then $S^n(X) \simeq S^{n-1}$ is the $(n-1)$-sphere.\footnote{Unfortunately, we have a clash of notation here regarding the symbol $S^n$.} If $X$ is nonempty, then any choice of basepoint $x \in X$ identifies $S^n(X)$ with the $n$th reduced suspension $\Sigma^n X $, because $\Sigma^n X$ is computed by the same diagram in pointed spaces $\sS_{\ast}$ and colimits in $\sS_{\ast}$ taken over connected diagrams are computed the same as in $\sS$.

We have that $S^n(X) \simeq (S^1 S^1 \ldots S^1)(X)$, with the single unreduced suspension $S^1(-)$ iterated $n$ times.
\end{rec}

\begin{thm} \label{thm:shiftingLink} Let $\sigma \in P$ with $n = \dim(\sigma)$. Then the geometric realization of the link $\left| \overline{P_{\geq \sigma}} - P_{\geq \sigma} \right|$ is the $n$th unreduced suspension of the geometric realization of the small link $\left| P_{> \sigma} \right|$. Consequently,
\[ S^{n+1} \left| P_{> \sigma} \right| \simeq S^1 S^{n} \left| P_{> \sigma} \right|  \xto{\sim} S^1(\left| \overline{P_{\geq \sigma}} - P_{\geq \sigma} \right|) \xto{\sim} \LL_P^{\sS}(\sigma). \]
\end{thm}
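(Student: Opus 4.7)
The plan is to combine the three preceding lemmas with the recollection on unreduced suspensions. I would first apply the geometric realization functor $|-| \colon \Cat_\infty \to \sS$ to the colimit equivalence of Lemma~\ref{lem:PosetSuspension}. Since $|-|$ preserves colimits, this yields
\[ \left| \overline{P_{\geq \sigma}} - P_{\geq \sigma} \right| \simeq \varinjlim_{I \subsetneq [n]} |A_I|. \]

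Next I would identify the diagram $I \mapsto |A_I|$ on the poset $\cP([n])_{<[n]}$ with the diagram $F$ of Rec.~\ref{rec:unreducedSuspension} evaluated at $X = |P_{>\sigma}|$. Objectwise this is immediate: Lem.~\ref{lm:slidingAbovePoset} gives $|A_\emptyset| \simeq |P_{>\sigma}|$, and Lem.~\ref{lm:cofinality}(2) gives $|A_I| \simeq \ast$ for every proper nonempty $I \subset [n]$. To promote these pointwise equivalences to an equivalence of functors, I would invoke the universal property of right Kan extension: since $F$ is characterized as the right Kan extension of $|P_{>\sigma}|$ from $\{\emptyset\}$ along $\{\emptyset\} \subset \cP([n])_{<[n]}$, and since the functor $I \mapsto |A_I|$ sends every nonempty proper $I$ to a terminal object and restricts correctly at $\emptyset$, the canonical comparison map $I \mapsto |A_I| \to F(I)$ is a levelwise equivalence, hence an equivalence of functors. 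Taking colimits then identifies $\varinjlim_{I \subsetneq [n]} |A_I|$ with $S^n |P_{>\sigma}|$ by the very definition of the $n$th unreduced suspension.

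Putting these steps together yields $|\overline{P_{\geq \sigma}} - P_{\geq \sigma}| \simeq S^n |P_{>\sigma}|$, which is the first assertion. For the displayed chain, I would then cite \ref{basicLinkRelation}, which supplies the equivalence $S^1 |\overline{P_{\geq \sigma}} - P_{\geq \sigma}| \simeq \LL_P^{\sS}(\sigma)$, and use that $S^{n+1} \simeq S^1 \circ S^n$ as endofunctors of $\sS$ (directly from Rec.~\ref{rec:unreducedSuspension}).

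The main obstacle is the naturality check in step two: one must verify that the two functors $\cP([n])_{<[n]} \to \sS$ agree not just pointwise but as diagrams. The cleanest route is through the universal property of right Kan extension as I outlined; alternatively, one could unwind the poset structure of $\cP([n])_{<[n]}$ directly and observe that every non-identity morphism in the target lands in a contractible object, so the only homotopical content is at the initial vertex $\emptyset$, which is handled by Lem.~\ref{lm:slidingAbovePoset}. The rest of the argument is essentially bookkeeping.
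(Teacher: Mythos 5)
Your proposal is correct and follows essentially the same route as the paper: geometric realization of the colimit decomposition in Lem.~\ref{lem:PosetSuspension}, combined with Lem.~\ref{lm:cofinality}(2) and Lem.~\ref{lm:slidingAbovePoset}, then \ref{basicLinkRelation} for the displayed chain. Your extra care on the naturality step --- recognizing that the diagram $I \mapsto |A_I|$ must be identified with the right-Kan-extended diagram defining $S^n$, not merely matched objectwise, and resolving this via the universal property of right Kan extension --- addresses a point the paper's proof leaves implicit.
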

\begin{proof} The first statement holds by taking the geometric realization of the diagram in Lemma \ref{lem:PosetSuspension}, using the weak contractibility result of Lemma \ref{lm:cofinality}(2), and identifying $A_{\emptyset}$ with $P_{> \sigma}$ by Lemma \ref{lm:slidingAbovePoset}. The consequence then follows by combining the first statement with the analysis of \ref{basicLinkRelation}.
\end{proof}



\begin{vrn} \label{vrn:functorialShiftingLink} Fix $\sigma \in P$ of dimension $n$. For every $\tau \in P_{> \sigma}$ and subset $I \subset [n]$, let $A_I^{\tau} = A_I \cap \overline{P_{\geq \sigma} - P_{\geq \tau}}$. Define a functor
\[ H: \cP([n]) \times P_{> \sigma} \to \Cat_{\infty} \]
by $H(I,\tau) = A_I^{\tau}$, with the functoriality given by the inclusion of subposets. Then for each $\tau > \sigma$, applying Lemma \ref{lem:PosetSuspension} with $P$ replaced by $P - P_{\geq \tau}$ shows that $H|_{\cP([n]) \times \{ \tau \}}$ is a colimit diagram, so the adjoint $H': \cP([n]) \to \Fun(P_{> \sigma}, \Cat_{\infty})$ is a colimit diagram. Postcomposing with geometric realization and using weak contractibility of the $A_I^\tau$ for $I \subset [n]$ nonempty proper (by Lemma \ref{lm:cofinality}(2) with $P$ replaced by $P - P_{\geq \tau}$), we see that $|H'([n])|$ is the $n$th unreduced suspension of $|H'(\emptyset)|$, as functors $P_{> \sigma} \to \sS$. Finally, using the isomorphisms $A^\tau_{\emptyset} \cong P_{> \sigma} - P_{\geq \tau}$ for all $\tau > \sigma$, we conclude that the functor
\[ B: P_{> \sigma} \to \sS, \: \tau \mapsto \left| \overline{P_{\geq \sigma} - P_{\geq \tau}} - (P_{\geq \sigma} - P_{\geq \tau}) \right| \]
is the $n$th unreduced suspension of the functor
\[ b: P_{> \sigma} \to \sS, \: \tau \mapsto \left| P_{> \sigma} - P_{\geq \tau} \right|.  \]

Now let
\[ F: \cP([n]) \to \Cat_{\infty}, \: I \mapsto A_I \]
be as before and let $F': \cP([n]) \to \Fun(P_{> \sigma}, \Cat_{\infty})$ be the composite of $F$ with the diagonal (i.e., the constant diagram functor). Via the inclusions $A_I^{\tau} \subset A_I$, we obtain a natural transformation $\eta: H' \to F'$. Taking geometric realizations, we get that $|\eta([n])| \simeq S^n(|\eta(\emptyset)|)$.

Concretely, this amounts to the following observation. Let $C, c: P_{> \sigma} \to \sS$ denote the constant functors at the geometric realization of the link $\left| \overline{P_{\geq \sigma}} - P_{\geq \sigma} \right|$ and the geometric realization of the small link $\left| P_{> \sigma} \right|$, respectively. Then the natural transformations $\phi: b \to c$ and $\Phi: B \to C$ defined by the evident inclusions satisfy the relation $\Phi \simeq S^n(\phi)$.
\end{vrn}

Using Variant \ref{vrn:functorialShiftingLink}, we can extend the objectwise equivalence of Theorem \ref{thm:shiftingLink} to an equivalence of functors.

\begin{prp} \label{prp:shiftedOrientationSheaf} Let $\sigma \in P$ of dimension $n$. Then we have an equivalence of functors 
\[ \Sigma^{n+1} \LL^{\sS}_{P_{>\sigma}} \simeq (\LL^{\sS}_P)|_{P_{>\sigma}} :P_{>\sigma} \to \sS_{\ast}. \]
\end{prp}
\begin{proof} Given $\tau > \sigma$, the sequence of inclusions
\[ P-P_{\geq \sigma} \to P-P_{\geq \tau} \to P \]
yields the cofiber sequence of pointed spaces
\[ \LL_{P - P_{\geq \tau}}^{\sS}(\sigma) \to \LL_P^{\sS}(\sigma) \to \LL_P^{\sS}(\tau) .\]
Let $F = \LL^{\sS}_{P-P_{\geq (-)}}(\sigma) : P_{> \sigma} \to \sS_{\ast}$ and $C(\sigma): P_{> \sigma} \to \sS_{\ast}$ be the constant functor at $\LL_P^{\sS}(\sigma)$. Then it follows that we have a cofiber sequence of functors $P_{>\sigma} \to \sS_\ast$
\[ F \to C(\sigma) \to (\LL_P^{\sS})|_{P_{>\sigma}}. \]
Now let $f: P_{> \sigma} \to \sS$ be defined by $f(\tau) = |P_{>\sigma} - P_{\geq \tau}|$ with the functoriality given by inclusion of subposets, and let $c(\sigma): P_{> \sigma} \to \sS$ be the constant functor at $|P_{> \sigma}|$. For all $\tau>\sigma$, we have cofiber sequences
\[ |P_{> \sigma} - P_{\geq \tau}| \to |P_{> \sigma}| \to \LL_{P_{>\sigma}}^{\sS}(\tau) \]
and this promotes to a cofiber sequence of functors $P_{>\sigma} \to \sS_\ast$
\[ f \to c(\sigma) \to \LL^{\sS}_{P_{>\sigma}}. \]

It follows from Variant \ref{vrn:functorialShiftingLink} that the natural transformation $F \to C(\sigma)$ is the $(n+1)$-unreduced suspension of $f \to c(\sigma)$. Unreduced suspension commutes with cofibers and is equivalent to ordinary suspension on pointed spaces, completing the proof.
\end{proof}

\subsection{Cosemisimplicial resolutions}
A choice of $\delta$-structure permits us to obtain cosemisimplicial resolutions computing the limit (i.e., global sections) of functors $F: P \to \sC$.

\begin{lem} \label{lm:cosimplicialResolution} Let $P$ be a poset equipped with a $\delta$-structure $\delta: P \to \Delta^{\inj}$. Let $\sC$ be an $\infty$-category with limits and let $F: P \to \sC$ be a functor. Then we have the following formula for the limit of $F$
\[ \varprojlim_{P} F \xto{\sim} {\varprojlim_{[n] \in \Delta^{\inj}}} \left( \prod_{\sigma \in P, \dim(\sigma)=n} F(\sigma) \right). \]
\end{lem}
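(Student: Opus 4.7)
The plan is to exploit the discrete cartesian fibration structure of $\delta: P \to \Delta^{\inj}$ to decompose $\varprojlim_P F$ as a limit over $\Delta^{\inj}$ of fiberwise limits, each of which is a product because the fibers of $\delta$ are by hypothesis discrete. Recall from the remark following Def.~\ref{dfn:dimension} that the fiber of $\delta$ over $[n]$ is exactly $P_{[n]} := \{\sigma \in P : \dim \sigma = n\}$, viewed as a set (equivalently, a discrete category).

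Concretely, I would factor $\varprojlim_P F \simeq \varprojlim_{\Delta^{\inj}} (\delta_* F)$ through the right Kan extension along $\delta$, and then compute $(\delta_* F)([n])$ pointwise. By the standard formula, $(\delta_* F)([n]) \simeq \varprojlim_{[n]/\delta} F \circ \pi$, where $[n]/\delta := P \times_{\Delta^{\inj}} (\Delta^{\inj})_{[n]/}$ is the comma category of pairs $(\sigma, f: [n] \to \delta(\sigma))$. The crux is then to show that the inclusion $\iota: P_{[n]} \hookrightarrow [n]/\delta$, $\sigma \mapsto (\sigma, \id_{[n]})$, is initial. Using the cartesian property of $\delta$, any morphism $h: \tau \to \sigma$ with $\delta(h) = f$ factors uniquely through the cartesian lift $f^* \sigma \to \sigma$ via a morphism $\tau \to f^* \sigma$ in $P_{[n]}$; consequently, for each $(\sigma, f) \in [n]/\delta$ the pullback $P_{[n]} \times_{[n]/\delta} ([n]/\delta)_{/(\sigma, f)}$ is equivalent to the overcategory $(P_{[n]})_{/f^* \sigma}$, which has $f^* \sigma$ as a terminal object and is hence weakly contractible. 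The dual of Quillen's Theorem A then yields $(\delta_* F)([n]) \simeq \varprojlim_{P_{[n]}} F|_{P_{[n]}}$, and since $P_{[n]}$ is discrete this limit is just the product $\prod_{\sigma : \dim \sigma = n} F(\sigma)$.

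Combining these identifications gives the stated formula. Equivalently, the whole argument is an instance of the dual of the colimit decomposition for cocartesian fibrations (HTT~Cor.~4.2.3.10, already invoked elsewhere in the paper), applied to the cartesian fibration $\delta$ and specialized to the case of discrete fibers. I do not anticipate any serious obstacle: once the fiberwise limit formula for cartesian fibrations is in place, the discreteness of the fibers of a $\delta$-structure immediately turns the inner limits into the required products.
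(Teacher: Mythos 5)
Your proposal is correct and takes essentially the same route as the paper: factor $\varprojlim_P F$ through the right Kan extension $\delta_* F$ along $\delta$ using transitivity, observe that for a cartesian fibration the pointwise right Kan extension is a fiberwise limit, and conclude using discreteness of the fibers. The paper cites the fiberwise-limit formula as a known fact, whereas you spell it out by checking initiality of the fiber inclusion $P_{[n]} \hookrightarrow [n]/\delta$ via the cartesian lifting property; that verification is sound (indeed, since $P_{[n]}$ is a set the comma category you identify with $(P_{[n]})_{/f^*\sigma}$ is literally a singleton).
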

\begin{proof} Because $\delta$ is a cartesian fibration, the right Kan extension of $F$ along $\delta$ is computed as the fiberwise limit and is given by the functor
\[ F^{\Delta}_{\bullet}: \Delta^{\inj} \to \sC, \: [n] \mapsto \prod_{\sigma \in P, \dim(\sigma)=n} F(\sigma). \]
The right Kan extension of $F$ along the projection of $P$ to the point computes the limit $\varprojlim_{P} F$. We now deduce the claim from the transitivity of right Kan extensions.
\end{proof}

Furthermore, we can exploit a `self-similarity' property of the category $\Delta^{\inj}$ in the form of Lemma \ref{lm:cartesianFibration} to obtain cosemisimplicial resolutions computing the limit of $F$ taken over subposets $P_{> \sigma}$. To state this, we need to introduce a bit of notation. For every $n \geq 0$, let $(\Delta^{\inj})^{\circ}_{[n]/}$ be the full subcategory of the slice category $(\Delta^{\inj})_{[n]/}$ excluding the initial object $[n] = [n]$. Given an inclusion $i: [n] \to [m]$, let $i^c: [m-n-1] \to [m]$ denote the complementary inclusion. Define a functor
$$\gamma_n: (\Delta^{\inj})_{[n]/}^{\circ} \to \Delta^{\inj}$$
on objects by $\gamma_n([n] \to [m]) = [m-n-1]$ and on morphisms
\[ \begin{tikzcd}[row sep=3ex, column sep=3ex, text height=1.5ex, text depth=0.25ex]
& \left[ n \right] \ar{ld}[swap]{i} \ar{rd}{j} & \\
\left[ m \right] \ar{rr}[swap]{f} & & \left[ m' \right] 
\end{tikzcd} \]
to be the unique map $\gamma_n(f): [m-n-1] \to [m'-n-1]$ which makes the diagram
\[ \begin{tikzcd}[row sep=4ex, column sep=6ex, text height=1.5ex, text depth=0.25ex]
\left[ m \right] \ar{r}{f} & \left[ m' \right] \\
\left[ m-n-1 \right] \ar{r}{\gamma_n(f)} \ar{u}{i^c} & \left[ m'-n-1 \right] \ar{u}{j^c}
\end{tikzcd} \]
commute.

\begin{lem} \label{lm:cartesianFibration} The functor $\gamma_n: (\Delta^{\inj})_{[n]/}^{\circ} \to \Delta^{\inj}$ is a cartesian fibration.
\end{lem}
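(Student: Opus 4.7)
The plan is to verify the cartesian property by hand, working from the concrete definition recalled in Rec.~\ref{rec:cartesianFibration}. Given an object $j\colon[n]\hookrightarrow[m']$ of $(\Delta^{\inj})^{\circ}_{[n]/}$ and a morphism $g\colon[k]\to[m'-n-1]=\gamma_n(j)$ in the base, I would first produce an explicit candidate lift $f\colon i\to j$ and then check that it has the required universal property.

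For the construction, work inside $[m']$ and form the disjoint subsets $S:=j([n])$ and $T:=j^c(g([k]))$, of sizes $n+1$ and $k+1$ respectively. Set $[m]:=[n+k+1]$ and identify it with $S\cup T\subset[m']$ via the unique order-isomorphism; let $f\colon[m]\hookrightarrow[m']$ be the resulting inclusion, and let $i\colon[n]\hookrightarrow[m]$ be the injection singling out $S$. Then $f\circ i = j$ by construction, and a short check using that $i^c\colon[k]\hookrightarrow[m]$ singles out $T$ gives $f\circ i^c = j^c\circ g$ and hence $\gamma_n(f)=g$. Note that $i$ lies in the punctured slice $(\Delta^{\inj})^{\circ}_{[n]/}$ because the existence of $g$ forces $k\ge 0$ and hence $m>n$.

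To see that $f$ is $\gamma_n$-cartesian, consider any morphism $h\colon i''\to j$ in the slice, where $i''\colon[n]\to[m'']$, together with an arbitrary factorization $\gamma_n(h)=g\circ g'$ of $\gamma_n(h)$ through $g$. The key observation is that the image of $h$ inside $[m']$ decomposes as
\[ h(i''([n])) \cup h(i''^c([m''-n-1])) = j([n])\cup j^c(\gamma_n(h)([m''-n-1])), \]
and the factorization exhibits this as a subset of $S\cup T = f([m])$. Since both $f$ and $h$ are order-preserving injections, $h$ factors uniquely as $f\circ h'$ for an order-preserving injection $h'\colon[m'']\to[m]$. Injectivity of $f$ then forces $h'\circ i''=i$ (by comparing both sides after postcomposition with $f$), so $h'$ is a morphism in the slice, and a parallel comparison against $f\circ i^c = j^c\circ g$ forces $\gamma_n(h')=g'$. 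This gives the required unique lift over $g'$.

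The only substantive step is the image-containment argument at the start of the third paragraph; everything else is bookkeeping with images and complements of subsets of totally ordered sets. No serious obstacle is anticipated, though one should be careful that the candidate $i$ belongs to the punctured slice (rather than being the excluded identity) and that the order on $[m]$ inherited from $[m']$ is compatible with the maps produced.
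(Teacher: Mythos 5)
Your proof is correct and follows essentially the same approach as the paper: both construct the candidate cartesian lift $f\colon [n+k+1]\to[n+k'+1]$ as the order-preserving injection with image $j([n])\cup j^c(g([k]))$, obtain $i$ as the factorization of $j$ through $f$, and verify cartesianness by the factorization/pullback argument that any $h\colon i''\to j$ lying over a map factoring through $g$ has image contained in $f([m])$ and hence factors uniquely through $f$. Your write-up merely spells out more explicitly the image-containment and uniqueness bookkeeping that the paper compresses into its final diagram.
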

\begin{proof} Let $g: [k] \to [k']$ be a morphism in $\Delta^{\inj}$ and let $j: [n] \to [n+k'+1]$ be an object in $(\Delta^{\inj})_{[n]/}^{\circ}$ covering $[k']$. Let $f: [n+k+1] \to [n+k'+1]$ be the morphism whose image is the union of that of $j$ and $g$, and let $i:[n] \to [n+k+1]$ be the factorization of $j$ through $f$. Then $f$, as a morphism $i \to j$ in $(\Delta^{\inj})_{[n]/}^{\circ}$, covers $g$, and we claim it is a $\gamma_n$-cartesian edge. We need to check that for any $h: [n] \to [m]$, the commutative square of sets
\[ \begin{tikzcd}[row sep=4ex, column sep=4ex, text height=1.5ex, text depth=0.25ex]
\Hom_{(\Delta^{\inj})_{[n]/}^{\circ}}(h,i) \ar{r}{f_{\ast}} \ar{d}{\gamma_n} & \Hom_{(\Delta^{\inj})_{[n]/}^{\circ}}(h,j) \ar{d}{\gamma_n} \\
\Hom_{\Delta^{\inj}} \left( \left[ m-n-1 \right], \left[ k \right] \right) \ar{r}{g_{\ast}} & \Hom_{\Delta^{\inj}}\left( \left[ m-n-1 \right],\left[ k' \right] \right)
\end{tikzcd} \]
is a pullback square. This amounts to the observation that given a commutative diagram
\[ \begin{tikzcd}[row sep=4ex, column sep=4ex, text height=1.5ex, text depth=0.25ex]
& \left[ n \right] \ar{rd}{j} \ar{ld}[swap]{h} & \\
\left[ m \right] \ar{rr}{\alpha} & & \left[ n+k'+1 \right]  \\
\left[ m-n-1 \right] \ar{r} \ar{u}{h^c} & \left[ k \right] \ar{r}{g} & \left[ k' \right] \ar{u}{j^c}
\end{tikzcd} \]
the map $\alpha$ factors uniquely through $[n+k+1]$ under $i$.
\end{proof}

\begin{nul} We interpret Lemma \ref{lm:cartesianFibration} as recording a `self-similarity' property of $\Delta^{\inj}$ in light of the following observation. Let $\sigma \in P$ be an object of dimension $n$. Define a functor
$$\chi: P_{>\sigma} \to (\Delta^{\inj})^{\circ}_{[n]/}$$
by $\chi(\tau) = \delta(\sigma \to \tau)$ and likewise on morphisms. It is easily checked that $\chi$ inherits the property of being a cartesian fibration from $\delta$. Together with Lemma \ref{lm:cartesianFibration}, we then see that the composition $\gamma_n \circ \chi: P_{>\sigma} \to \Delta^{\inj}$ is a discrete cartesian fibration.

Now applying Lemma \ref{lm:cosimplicialResolution} to the restriction of $F$ to $P_{>\sigma}$ with $\gamma_n \circ \chi$ taken as the $\delta$-structure on $P_{>\sigma}$, we obtain the formula
\[ \varprojlim_{P_{>\sigma}} F|_{P_{>\sigma}} \simeq {\varprojlim_{[k] \in \Delta^{\inj}}} \left( \prod_{\tau > \sigma, \dim(\tau) = n+k+1} F(\tau) \right).  \]
\end{nul}

Let us also record here a consequence of the above discussion.
\begin{cor} \label{cor:LinkAdmissible} If $P$ is $\delta$-admissible, then for any $\sigma \in P$, the subposet $P_{>\sigma}$ is $\delta$-admissible.
\end{cor}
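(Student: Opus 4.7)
The plan is to equip $P_{>\sigma}$ with the $\delta$-structure $\gamma_n \circ \chi \colon P_{>\sigma} \to \Delta^{\inj}$ constructed immediately before the corollary (with $n = \dim(\sigma)$). The discussion preceding the statement already verifies that this composite is a discrete cartesian fibration: $\chi$ inherits the cartesian fibration property from $\delta$ by base change, and $\gamma_n$ is cartesian by Lem.~\ref{lm:cartesianFibration}. Thus the only substantive task is to verify the injectivity hypothesis ($\ast$) of Dfn.~\ref{dfn:deltaStructure} for $\gamma_n \circ \chi$.

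To set this up, I would first translate cartesian lifts in $P_{>\sigma}$ back to cartesian lifts in $P$. Fix $\tau > \sigma$ with $\delta(\tau) = [m]$ and write $\chi(\tau) = (i \colon [n] \to [m])$, so $(\gamma_n \circ \chi)(\tau) = [k]$ with $k = m - n - 1$. Reading off the construction in the proof of Lem.~\ref{lm:cartesianFibration}, for any injection $\alpha \colon [j] \to [k]$ the $\gamma_n$-cartesian lift of $\alpha$ at $\chi(\tau)$ is the arrow $\tilde\alpha \colon [n+j+1] \to [m]$ in $\Delta^{\inj}$ whose image is $i([n]) \cup i^c(\alpha([j]))$. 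Lifting further along the cartesian fibration $\chi$, the source of the $(\gamma_n \circ \chi)$-cartesian lift of $\alpha$ at $\tau$ is then identified with $\tilde\alpha^{\ast}(\tau)$, computed with respect to the original $\delta$-structure on $P$.

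With this identification in hand, the injectivity check is immediate. Given two distinct injections $\alpha, \beta \colon [j] \to [k]$, injectivity of $i^c$ gives $i^c \circ \alpha \neq i^c \circ \beta$, so the associated arrows $\tilde\alpha, \tilde\beta \colon [n+j+1] \to [m]$ are also distinct. Applying condition ($\ast$) for the $\delta$-structure on $P$ at $\tau$ then yields $\tilde\alpha^{\ast}(\tau) \neq \tilde\beta^{\ast}(\tau)$, which by the previous identification is precisely $\alpha^{\ast}(\tau) \neq \beta^{\ast}(\tau)$, as required.

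The only mildly delicate step is the identification in the second paragraph: one must correctly compose the cartesian lifts produced by $\chi$ and by $\gamma_n$ and verify that the resulting composite lift has the expected source object in $P$. This is pure bookkeeping with the universal property of cartesian edges, so I do not expect any real obstacle beyond keeping the indices straight.
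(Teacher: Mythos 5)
Your proof follows the paper's approach exactly: the paper states the corollary as ``a consequence of the above discussion,'' namely the construction of the discrete cartesian fibration $\gamma_n \circ \chi\colon P_{>\sigma} \to \Delta^{\inj}$, which it already refers to as ``the $\delta$-structure on $P_{>\sigma}$.'' The paper leaves the injectivity hypothesis ($\ast$) of Dfn.~\ref{dfn:deltaStructure} unverified, and your explicit check — identifying the $(\gamma_n\circ\chi)$-cartesian lift of $\alpha$ at $\tau$ with $\tilde\alpha^{\ast}(\tau)$ for $\tilde\alpha$ the $\gamma_n$-cartesian lift, and then using that $i^c$ is a monomorphism together with condition ($\ast$) for $\delta$ — correctly supplies the missing detail.
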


\begin{nul}[\textbf{Bousfield-Kan spectral sequence}] \label{BKsseq} Let $\sC$ be a stable $\infty$-category equipped with a t-structure, e.g., $\sC = \Sp$ or $\categ{D}(k)$. Then we have the Bousfield-Kan spectral sequence for a cosemisimplicial object\footnote{cf. \cite[Var.~1.2.4.9]{HA}, bearing in mind that one can convert between a discussion of semisimplicial and cosemisimplicial objects by taking opposites and using that the opposite of a stable $\infty$-category with $t$-structure is again stable and has the opposite $t$-structure.}, which for $X_{\bullet}: \Delta^{\inj} \to \sC$ reads as
\[ E^1_{p,q} = \pi_q(X_p) \Rightarrow \pi_{p+q}\left( \varprojlim X_{\bullet} \right) \]

with the $d^1$-differential $d^1_{p,q}: \pi_q(X_p) \to \pi_q(X_{p+1})$ defined as $\pi_q$ of the alternating sum of the coface maps. For $F: P \to \sC$, we therefore have a spectral sequence
\[ E^1_{p,q} = \prod_{\sigma, \dim(\sigma)=p} \pi_q(F(\sigma)) \Rightarrow \pi_{p+q} \left( \varprojlim_P F \right)   \]

and for any $\sigma \in P$ of dimension $n$, we have a spectral sequence

\[ E^1_{p,q} = \prod_{\tau > \sigma, \dim(\tau) = n+p+1} \pi_q(F(\tau)) \Rightarrow \pi_{p+q} \left( \varprojlim_{P_{>\sigma}} F|_{P_{>\sigma}} \right). \]

The convergence of the Bousfield-Kan spectral sequence is generally a delicate matter. However, we have strong convergence if $X_k = 0$ for $k$ sufficiently large, which holds in our situation with $X_{\bullet} = F^{\Delta}_{\bullet}$ if $P$ is finite-dimensional.
\end{nul}

\begin{nul}[\textbf{Poincar\'{e} duality}] \label{PoincareDuality} Suppose now that $P$ is finite and let $F = \LL^{\ZZ}_P: P \to \categ{D}(\ZZ)$. Suppose that $F$ is locally constant at $\ZZ[n]$, $n = \dim(P)$. Then all the terms in Bousfield-Kan spectral sequence for $F^{\Delta}_{\bullet}$ vanish except for $E^1_{p,n}$, $0 \leq p \leq n$, and we obtain a cochain complex $D^{\bullet} = E^1_{\bullet,n}$ of abelian groups
\[ \begin{tikzcd}[row sep=4ex, column sep=6ex, text height=1.5ex, text depth=0.5ex]
\bigoplus\limits_{\sigma, \dim(\sigma)=0} \ZZ \ar{r}{d^1_{n,0}} & \bigoplus\limits_{\sigma, \dim(\sigma)=1} \ZZ \ar{r}{d^1_{n,1}} & \ldots \ar{r}{d^1_{n,n-2}} & \bigoplus\limits_{\sigma, \dim(\sigma)=n-1} \ZZ \ar{r}{d^1_{n,n-1}} & \bigoplus\limits_{\sigma, \dim(\sigma)=n} \ZZ
\end{tikzcd} \]

whose cohomology is the $E^2$-page of the spectral sequence. Because the $E^1$-page is concentrated on a single row and the $d^n$ differentials read as $d^n_{p,q}: E^n_{p,q} \to E^n_{p+n,q+n-1}$, the spectral sequence must degenerate at $E^2$, and we obtain the isomorphisms
\begin{equation} \label{eq:isomorphism}
H^p(D) \cong \pi_{n-p} \left( \varprojlim \LL^{\ZZ}_P \right) \text{ for all } p \in \ZZ.
\end{equation}

This proves a non-orientable Poincar\'{e} duality theorem in our context (but see Remark \ref{rem:standardPoincareDuality}). To see this, let $K$ be the semisimplicial set determined by $(P,\delta)$, and let $C_{\bullet}$ be the chain complex corresponding to the free semisimplicial abelian group $\ZZ[K]$ under the Dold-Kan correspondence, so $C_p = \ZZ[K_p] = \bigoplus_{\sigma, \dim(\sigma) = p} \ZZ$ and the boundary homomorphism $\partial_p: C_p \to C_{p-1}$ is given as the alternating sum of the face maps. Then $C_{\bullet}$ is a chain level model for $H \ZZ \otimes \Sigma^\infty_+ |P|$, which by Proposition \ref{prp:ExtendedOrientationSheaf} is equivalent to $\varprojlim \LL^{\ZZ}_P$.

On the other hand, the explicit description of the differentials $d^1_{n,p}$ shows that, up to sign, they are given as the transposes of the $\partial_{p+1}$. Indeed, the local constancy assumption on $\LL^{\ZZ}_P$ ensures that for all $\sigma < \tau$, the maps $\pi_n \LL^{\ZZ}_P(\sigma) \cong \ZZ \to \pi_n \LL^{\ZZ}_P(\tau) \cong \ZZ$ are given by multiplication by $u_{\sigma<\tau} = \pm 1$ (here, we use the same isomorphisms defining $D^{\bullet}$). The claim then follows by observing that for $\sigma$ and $\tau$ of dimension $p$ resp. $p+1$, by definition the $(\tau,\sigma)$ entry of the map $d^1_{n,p}$ is $0$ if $\sigma \nless \tau$ and $(-1)^i u_{\sigma<\tau}$ if $\sigma < \tau$ and $d_i(\tau) = \sigma$. In sum, we thus see that the isomorphism \eqref{eq:isomorphism} involves homology groups on the right and \emph{twisted} cohomology groups on the left.

Upon taking $\FF_2$-coefficients, we may identify $d^1_{n,p}$ with $\partial_{p+1}^T$, thereby deducing the isomorphisms 
\[ H^p(C; \FF_2) \cong H_{n-p}(C; \FF_2). \]
If we can choose a system of generators of the $\pi_n \LL^{\ZZ}_P(\sigma)$ which are compatible in the sense that all of the signs $u_{\sigma<\tau}$ equal $1$, then we moreover have the integral isomorphisms
\[ H^p(C; \ZZ) \cong H_{n-p}(C; \ZZ). \]
We thus see that the choice of such a compatible system is the combinatorial analogue of orienting a manifold.
\end{nul}

\begin{nul}[\textbf{Orientability}] \label{orientablePoincareDuality} Suppose that $P$ is finite, of dimension $n$, and connected, and that the local $\FF_2$-homology sheaf $\LL^{\FF_2}_P$ is locally constant (necessarily at $\Sigma^{\infty} S^n \otimes H \FF_2 \simeq \Sigma^n H \FF_2$). Then by \ref{PoincareDuality}, $H_{n}(|P|; \FF_2) \cong H^0(|P|; \FF_2) \cong \FF_2$. Let $f: \Sigma^{n} H \FF_2 \to \Sigma^{\infty}_+|P| \otimes H \FF_2$ be a map corresponding to a generator $g$ of $H_n(|P|; \FF_2)$. Examining the spectral sequence, we see that $g$ maps to a generator of $\pi_n(\LL_P^{\FF_2}(\sigma)) \cong \FF_2$ for all $\sigma \in P$ with $\dim(\sigma)=0$, hence for all $\sigma \in P$ by local constancy of $\LL_P^{\FF_2}$ and connectedness of $P$. Thus, for all $\sigma \in P$ the composite map
\[ \Sigma^{n} H \FF_2 \xto{f} \Sigma^{\infty}_+|P| \otimes H \FF_2 \simeq \varprojlim_{\sigma \in P} \LL_P^{\FF_2} \to \LL_P^{\FF_2}(\sigma) \]
is an equivalence. Let $c[n]: P \to \categ{D}(\FF_2)$ denote the constant functor at $\Sigma^{n} H \FF_2$ and let $\theta: c[n] \to \LL_P^{\FF_2}$ be the natural transformation adjoint to $f$. Because equivalences in functor categories are checked objectwise, it follows that $\theta$ is an equivalence. In other words, the locally constant sheaf $\LL^{\FF_2}_P$ is actually constant.

Now suppose that $H_n(|P|; \ZZ) \cong \ZZ$. Let $f': \Sigma^{n} H \ZZ \to \Sigma^{\infty}_+|P| \otimes H \ZZ$ be a map corresponding to a generator of $H_n(|P|; \ZZ)$. By the same reasoning as above, the natural transformation adjoint to $f'$ is an equivalence, showing that $\LL^{\ZZ}_P$ is a constant sheaf. Constancy of $\LL^{\ZZ}_P$ then permits us to choose a compatible system of generators of $\pi_n \LL^{\ZZ}_P$ in the sense of \ref{PoincareDuality}, which yields Poincar\'{e} duality integrally as for orientable manifolds. Even better, constancy of $\LL^{\ZZ}_P$ implies that we have an equivalence in $\categ{D}(\ZZ)$
\[ \Sigma^{\infty}_+ |P| \otimes H \ZZ \simeq \varprojlim_{P} \LL_P^{\ZZ} \simeq F(\Sigma^{\infty}_+ |P|, \Sigma^{n} H \ZZ). \]
\end{nul}

\begin{rem} \label{rem:standardPoincareDuality}
Suppose that $P$ is finite and let $n = \dim(P)$; for instance, $P$ could be the poset of simplices of a finite triangulation of a possibly non-orientable closed connected $n$-manifold. Define the \emph{orientation sheaf} of $P$ by $\omega_P \coloneq \Sigma^{-n} \LL^{\ZZ}_P$. Recall that by definition, given a sheaf $\cF$ on $P$, its $k$th cohomology group is defined to be $\pi_{-k}$ of the global sections
$$ H^k(P; \cF) \coloneq \pi_{-k}(\varprojlim \cF).$$
In particular, since $\varprojlim \Sigma^{-n} \LL^{\ZZ}_P \simeq \Sigma^{-n} H \ZZ \otimes \Sigma^{\infty}_+ |P|$ (by Proposition \ref{prp:ExtendedOrientationSheaf} and using that desuspension commutes with limits), we have that $H^k(P; \omega_P) \cong H_{n-k}(P; \ZZ)$, which matches with the familiar non-orientable Poincar\'{e} duality isomorphism from manifold theory. In particular, note that the isomorphism \eqref{eq:isomorphism} of \ref{PoincareDuality} is not merely this statement.
\end{rem}





\section{Stratification} \label{sec:stratification}

In this section, let $P$ be a finite $\delta$-admissible poset. In other words, there exists an abstract finite simplicial complex $K$ such that $P$ is the poset of simplices of $K$ (cf. Example \ref{exm:correspondence_delta_str}).

\begin{dfn} \label{dfn:strat} Let $n = \dim(P)$. A map of posets $\pi: P \to [n]$ is a \emph{stratification} of $P$ if it possesses the following property:

\begin{itemize}
\item[($\ast$)] For every $0 \leq i \leq n$, regard $[i]$ as the subposet $\{0<1<...<i\}$ of $[n]$, and let
\[ P_{\pi \leq i} \coloneq [i] \times_{[n]} P, \quad P_{\pi = i} \coloneq \{i\} \times_{[n]} P. \]
Then the restriction of $\LL_{P_{\pi \leq i}}$ to $P_{\pi = i}$ is locally constant at $\Sigma^\infty S^i$.\footnote{Note that this condition is vacuous if $P_{\pi = i} = \emptyset$.}
\end{itemize}


We say that a stratification $\pi$ is \emph{canonical} if for every $0 \leq i \leq n$, $P_{\pi = i} \subset P_{\pi \leq i}$ is maximal among all cosieves $U \subset P_{\pi \leq i}$ such that $(\LL_{P_{\pi \leq i}})|_U$ is locally constant at $\Sigma^{\infty} S^i$.

If the stratification $\pi$ is canonical, we call $P_{\pi=i}$ the \emph{$i$-strata} and $P_{\pi=n}$ the \emph{generic strata}. We also call a connected component of $P_{\pi=i}$ (resp. $P_{\pi=n}$) an \emph{$i$-stratum} (resp. \emph{generic stratum}).
\end{dfn}

\begin{rem} It is clear from the definition that canonical stratifications exist and are unique, so we will speak of \emph{the} canonical stratification of $P$.
\end{rem}

\begin{exm} The dimension map $\dim: P \to [\dim(P)]$ is a stratification of $P$, but it is generally not canonical (cf. the examples in the introduction).
\end{exm}

\begin{exm}
Let $P$ be the poset of simplices of a finite triangulation of a closed $n$-manifold. Define the map $\pi: P \to [n]$ to be constant at the value $\{ n \}$. Then $\pi$ is the canonical stratification of $P$ and $P$ equals its own generic strata.
\end{exm}

\begin{exm}
To generalize the example of the $2$-disk from the introduction, let $P$ be the poset of simplices of a finite triangulation of a compact $n$-manifold $M$ with nonempty boundary $\partial M$, where the triangulation is chosen to be compatible with the inclusion $\partial M \subset M$. Let $\partial P \subset P$ be the subposet on those simplices belonging to $\partial M$. Define the map $\pi: P \to [n]$ by sending $\partial P$ to $\{ n-1 \}$ and all other simplices to $\{ n \}$. Then $\pi$ is the canonical stratification of $P$, and the generic strata is given by those simplices in the interior of $M$.
\end{exm}

\begin{exm}
Consider the closed $3$-cube $I^3 = [0,1]^{\times 3}$, let $K$ be a finite triangulation of $I^3$ whose vertices include the 8 endpoint vertices, and let $P$ be the poset of simplices of $K$. Then its canonical stratification $\pi: P \to [3]$ is a surjective map whose fiber over $\{ 0 \}$ consists exactly of those 8 vertices.
\end{exm}

\begin{rem} \label{rem:stableVsUnstable} The notion of stratification in Definition \ref{dfn:strat} is \emph{stable} in the sense of being defined with reference to the $\Sp$-valued sheaf $\LL_P$. We could have considered the corresponding unstable notion with $\LL_P^{\sS}$ in place of $\LL_P$, but this would not be amenable to practical computation.
\end{rem}

\begin{rem} [Localization] \label{rem:localization} Suppose that $A$ is any poset. Let $\Str_{A}$ denote the full subcategory of $(\Cat_{\infty})_{/A}$ consisting of those functors $\pi: \sC \to A$ which are \emph{conservative}, in the sense that for any morphism $e: x \to y$ in $\sC$, if $\pi(e)$ is an equivalence then $e$ is an equivalence. Note that the fibers of a conservative functor $\pi: \sC \to A$ are spaces. As discussed in \cite{HaineStrat,DouteauThesis,Douteau2020}, the $\infty$-category $\Str_A$ should be thought of as the $\infty$-category of $A$-\emph{stratified spaces}. By \cite[Constr.~2.2.3]{Exodromy}, the fully faithful inclusion $\iota_A: \Str_{A} \to (\Cat_{\infty})_{/A}$ admits a left adjoint $\Ex^{\infty}_A$ that specializes to the adjunction
\[ \adjunct{|-|}{\Cat_{\infty}}{\sS}{\iota} \]
in the case that $A = [0]$ is the terminal poset.

Now suppose that $f: A \to B$ is a map of posets. Recall that we have the adjunction
\[ \adjunct{f_!}{(\Cat_{\infty})_{/A}}{(\Cat_{\infty})_{/B}}{f^{\ast}} \]
where $f_!(\pi: \sC \to A) = (f\circ\pi: \sC \to B)$ and $f^{\ast}(\sC \to B) = (\sC \times_B A \to A)$. Because the pullback of a conservative functor is again conservative, $f^{\ast}$ restricts to a functor $f^{\ast}: \Str_B \to \Str_A$. We then have the induced adjunction
\[ \adjunct{\Ex^{\infty}_B f_! i_A}{\Str_A}{\Str_B}{f^{\ast}} \]
by the usual observation regarding mapping spaces
\begin{align*}
\Map_{\Str_B}(\Ex^{\infty}_B f_! i_A (\sC \to A), (\sD \to B)) & \simeq \Map_{(\Cat_{\infty})_{/B}}(f_! i_A (\sC \to A), (\sD \to B)) \\
& \simeq \Map_{(\Cat_{\infty})_{/A}} (i_A (\sC \to A), f^{\ast}(\sD \to A)) \\
& \simeq \Map_{\Str_A}((\sC \to A), f^{\ast}(\sD \to A)).
\end{align*}

In particular, letting $f: A \to [0]$ be the map from $A$ to the terminal poset, we get that
$$|\sC| \xto{\sim} |\Ex^{\infty}_A(\sC \to A)|,$$
where $|\Ex^{\infty}_A(\sC \to A)|$ denotes the geometric realization of the domain $\infty$-category and the map is induced by the unit of the adjunction $\Ex^{\infty}_A \dashv \iota_A$.

Furthermore, the explicit construction of $\Ex_A^{\infty}$ in \cite[Constr.~2.2.3]{Exodromy} shows that the diagram
\[ \begin{tikzcd}[row sep=4ex, column sep=4ex, text height=1.5ex, text depth=0.5ex]
(\Cat_{\infty})_{/B} \ar{r}{\Ex^{\infty}_B} \ar{d}{f^{\ast}} & \Str_B \ar{d}{f^{\ast}} \\
(\Cat_{\infty})_{/A} \ar{r}{\Ex^{\infty}_A} & \Str_A
\end{tikzcd} \]
commutes. In particular, letting $f: \{x\} \to A$ be the inclusion of an object, we get an identification of the fiber $(\Ex^{\infty}_A(\sC \to A))_x \simeq |\sC_x|$.
\end{rem}

\begin{dfn} Let $n = \dim(P)$ and $\pi: P \to [n]$ be the canonical stratification of $P$. The \emph{canonical stratified homotopy type} of $P$ is the functor $\Pi^{\can} \coloneq \Ex^{\infty}_{[n]}(\pi): \sP^{\can} \to [n]$.
\end{dfn}


The primary goal of this section is to give a computationally tractable algorithm (Algorithm \ref{algorithm}) for determining the canonical stratification of $P$. The main engine behind this algorithm is Proposition \ref{prp:extendingLocalConstancy}, which concerns how to, given local constancy of $\LL_P$ on $P_{>\sigma}$, determine whether or not $(\LL_P)|_{P_{\geq \sigma}}$ is locally constant. We prepare for the proof of that proposition with the following lemma.

\begin{lem} \label{lm:basicFiberSequence} Let $\sigma \in P$ of dimension $d$. Then we have a fiber sequence of spectra
\[ \Sigma^\infty S^d \to \LL_P(\sigma) \to \varprojlim_{\tau \in P_{>\sigma}} \LL_P(\tau). \]
\end{lem}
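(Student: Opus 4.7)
The plan is to identify the limit $\varprojlim_{\tau \in P_{>\sigma}} \LL_P(\tau)$ explicitly via the extended sheaf $\widetilde{\LL_P}$ of Prop.~\ref{prp:ExtendedOrientationSheaf}, produce the fiber sequence from a standard cofiber construction on subcomplexes of $|P|$, and then use the $\delta$-structure on $P$ to identify the fiber with $\Sigma^\infty S^d$. More precisely, by Cor.~\ref{cor:extendedOrientationSheaf} (or the pointwise formula for right Kan extension, as used in the proof of Prop.~\ref{prp:shriekFunctoriality}) we have
\[ \varprojlim_{\tau \in P_{>\sigma}} \LL_P(\tau) \simeq \widetilde{\LL_P}(P_{>\sigma}) = \Sigma^\infty(|P|/|P-P_{>\sigma}|), \]
and under this identification the canonical map $\LL_P(\sigma) \to \varprojlim_\tau \LL_P(\tau)$ corresponds to the restriction $\widetilde{\LL_P}(P_{\geq \sigma}) \to \widetilde{\LL_P}(P_{>\sigma})$ along the cosieve inclusion $P_{>\sigma} \subset P_{\geq \sigma}$; equivalently, it is $\Sigma^\infty$ applied to the quotient $|P|/|P-P_{\geq \sigma}| \to |P|/|P-P_{>\sigma}|$ induced by $P-P_{\geq \sigma} \subset P - P_{>\sigma}$. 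Applying the standard cofiber sequence $B/A \to X/A \to X/B$ in pointed spaces to the triple $A = |P - P_{\geq \sigma}| \subset B = |P - P_{>\sigma}| \subset X = |P|$, and then $\Sigma^\infty$, produces the fiber sequence
\[ \Sigma^\infty(|P - P_{>\sigma}|/|P - P_{\geq \sigma}|) \to \LL_P(\sigma) \to \varprojlim_{\tau \in P_{>\sigma}} \LL_P(\tau). \]

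It remains to identify the leftmost term with $\Sigma^\infty S^d$. To do this, I would apply Thm.~\ref{thm:MayerVietoris} to the sieve cover $\{P_{\leq \sigma}, P - P_{\geq \sigma}\}$ of $P - P_{>\sigma}$ (whose intersection is $P_{<\sigma}$) and the identity functor; taking geometric realizations (which preserves colimits) yields the pushout square
\[ \begin{tikzcd}[row sep=3ex, column sep=5ex]
|P_{<\sigma}| \ar{r} \ar{d} & |P_{\leq \sigma}| \ar{d} \\
|P - P_{\geq \sigma}| \ar{r} & |P - P_{>\sigma}|.
\end{tikzcd} \]
Reading off the cofiber, $|P - P_{>\sigma}|/|P - P_{\geq \sigma}| \simeq |P_{\leq \sigma}|/|P_{<\sigma}|$. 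Since $\sigma$ is terminal in $P_{\leq \sigma}$, the inclusion $\{\sigma\} \hookrightarrow P_{\leq \sigma}$ is cofinal and $|P_{\leq \sigma}|$ is contractible. Computing the cofiber of $|P_{<\sigma}| \hookrightarrow |P_{\leq \sigma}|$ as the pointed pushout $\ast \sqcup_{|P_{<\sigma}|} |P_{\leq \sigma}| \simeq \ast \sqcup_{|P_{<\sigma}|} \ast$ then gives the unreduced suspension $S|P_{<\sigma}|$.

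Finally, the $\delta$-structure identifies $P_{<\sigma}$ with the poset $(\Delta^{\inj}_{/[d]})_{<\mathrm{id}}$ of nonempty proper subsets of $[d]$, whose nerve realizes to the barycentric subdivision of $\partial \Delta^d \simeq S^{d-1}$; the degenerate case $d=0$ is handled by the convention $S\emptyset = S^0$ from Rec.~\ref{rec:unreducedSuspension}. Hence $S|P_{<\sigma}| \simeq S^d$, so $\Sigma^\infty(|P-P_{>\sigma}|/|P-P_{\geq \sigma}|) \simeq \Sigma^\infty S^d$ and the proof is complete. The main subtlety is verifying that the structural map $\LL_P(\sigma) \to \varprojlim_\tau \LL_P(\tau)$ of the lemma coincides with the one arising from the cofiber sequence; this should follow from the naturality of the equivalence $\widetilde{\LL_P}|_P \simeq \LL_P$ of Cor.~\ref{cor:extendedOrientationSheaf} together with the fact that, for a cosieve $U$, the value $\widetilde{\LL_P}(U)$ computes the limit $\varprojlim_{x \in U} \LL_P(x)$ by right Kan extension.
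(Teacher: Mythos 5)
Your proof is correct and follows essentially the same route as the paper: identify the limit as $\Sigma^\infty(|P|/|P-P_{>\sigma}|)$ via Prop.~\ref{prp:ExtendedOrientationSheaf}/Cor.~\ref{cor:extendedOrientationSheaf}, read off the fiber as $\Sigma^\infty(|P-P_{>\sigma}|/|P-P_{\geq\sigma}|)$ from the triple of subcomplexes, use Thm.~\ref{thm:MayerVietoris} with the sieve cover $\{P_{\leq\sigma},\,P-P_{\geq\sigma}\}$ to excise down to $|P_{\leq\sigma}|/|P_{<\sigma}|$, and identify the latter with $S^d$. You supply slightly more detail than the paper in the last step (the paper simply notes that $P_{<\sigma}$, resp.\ $P_{\leq\sigma}$, is the category of simplices of $\partial\Delta^d$, resp.\ $\Delta^d$), but the argument is the same.
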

\begin{proof} By Proposition \ref{prp:ExtendedOrientationSheaf}, we have an equivalence
\[ \Sigma^\infty |P|/|P-P_{> \sigma}| \simeq \varprojlim_{\tau \in P_{>\sigma}} \LL_P(\tau) \]
under which the canonical map
\[ f: \LL_P(\sigma) \to \varprojlim_{\tau \in P_{>\sigma}} \LL_P(\tau) \]
is identified with the map
\[ \Sigma^\infty |P|/|P-P_{\geq \sigma}| \to \Sigma^\infty |P|/|P-P_{> \sigma}| \]
induced by the inclusions $P-P_{\geq \sigma} \subset P-P_{> \sigma} \subset P$. $f$ then fits into a fiber sequence
\[ \Sigma^\infty |P-P_{> \sigma}|/|P-P_{\geq \sigma}| \to \LL_P(\sigma) \xto{f} \Sigma^\infty |P|/|P-P_{> \sigma}|. \]
The two sieves $P-P_{\geq \sigma}$ and $P_{\leq \sigma}$ cover $P-P_{> \sigma}$, so by Theorem \ref{thm:MayerVietoris} we have a pushout square of posets
\[ \begin{tikzcd}[row sep=4ex, column sep=4ex, text height=1.5ex, text depth=0.25ex]
P_{< \sigma} = P_{\leq \sigma} \cap (P-P_{\geq \sigma}) \ar{r} \ar{d} & P_{\leq \sigma} \ar{d} \\
P-P_{\geq \sigma} \ar{r} & P-P_{> \sigma}
\end{tikzcd} \]
and hence an equivalence of pointed spaces $|P_{\leq \sigma}|/|P_{< \sigma}| \xto{\sim} |P-P_{> \sigma}|/|P-P_{\geq \sigma}|$. Finally, note that $|P_{\leq \sigma}|/|P_{< \sigma}| \simeq S^d$ since $P_{< \sigma}$, resp. $P_{\leq \sigma}$ is the category of simplices of $\partial \Delta^{d}$, resp. $\Delta^d$.
\end{proof}

\begin{prp} \label{prp:extendingLocalConstancy} Let $\sigma \in P$ of dimension $d$ with a successor of top dimension $n = \dim(P)$, $n>d$. Suppose $(\LL_P)|_{P_{>\sigma}}$ is locally constant. Then $(\LL_P)|_{P_{\geq \sigma}}$ is locally constant if and only if $\Sigma^\infty_+ |P_{>\sigma}| \simeq \Sigma^\infty_+ S^{n-1-d}$.
\end{prp}
To understand the statement of Proposition \ref{prp:extendingLocalConstancy}, the reader may want to keep in mind the following picture: suppose $P$ is the poset of simplices of a finite triangulation of the $2$-disk $D^2$ as in the introduction, and $x$ is a vertex. Then $|P_{>x}|$ is homotopy equivalent to a circle if and only if $x$ is an interior vertex.

\begin{proof} Note that if $\kappa$ is a simplex of top dimension $n$, then $\LL_P(\kappa) \simeq \Sigma^\infty S^n$. Therefore, the value of $\LL_P$ on any $\tau>\sigma$ is necessarily $\Sigma^\infty S^n$ in view of the local constancy hypothesis on $(\LL_P)|_{P_{>\sigma}}$. 

First suppose $(\LL_P)|_{P_{\geq \sigma}}$ is locally constant. Then $\LL_P(\sigma) \simeq \Sigma^\infty S^n$. On the other hand, by Theorem \ref{thm:shiftingLink} we have that $\LL_P(\sigma) \simeq \Sigma^\infty S^d |P_{> \sigma}|$. Choosing any basepoint of $|P_{> \sigma}|$, we deduce that $\Sigma^\infty |P_{> \sigma}| \simeq \Sigma^\infty S^{n-1-d}$.

Conversely, suppose that $\Sigma^\infty_+ |P_{>\sigma}| \simeq \Sigma^\infty_+ S^{n-1-d}$. Then by Theorem \ref{thm:shiftingLink}, we get that $\LL_P(\sigma) \simeq \Sigma^\infty S^n$. Moreover, the map $\Sigma^\infty S^d \to \LL_P(\sigma)$ of Lemma \ref{lm:basicFiberSequence} is zero because $\pi^s_d(S^n) = 0$ for $d<n$. Shifting the fiber sequence of Lemma \ref{lm:basicFiberSequence} over by one to the right, we obtain a split fiber sequence
\[ \LL_P(\sigma) \simeq \Sigma^\infty S^n \to \varprojlim (\LL_P)|_{P_{> \sigma}} \simeq \Sigma^\infty S^n \oplus \Sigma^\infty S^{d+1} \to \Sigma^\infty S^{d+1}.  \]


If $d=n-1$, then $P_{>\sigma}$ is discrete and must have exactly two objects $\tau_0, \tau_1$, and the restriction of either map $\varprojlim (\LL_P)|_{P_{> \sigma}} \to \LL_P(\tau_i)$, $i=0,1$ to the summand $\Sigma^\infty S^n$ is necessarily an equivalence. Thus the maps $\LL_P(\sigma) \to \LL_P(\tau_i)$, $i=0,1$ are equivalences, so $(\LL_P)|_{P \geq \sigma}$ is locally constant.

Now suppose $d<n-1$. We claim that for any $\tau > \sigma$, the restriction of the canonical map
$$\varprojlim (\LL_P)|_{P_{> \sigma}} \to \LL_P(\tau) \simeq \Sigma^\infty S^n$$
to the summand $\Sigma^\infty S^n$ is an equivalence, i.e., a degree $\pm 1$ map. For this, it suffices to show that for any $\tau>\sigma$ with $\dim(\tau) = d+1$, the map
\[ \pi_n \varprojlim (\LL^{\ZZ}_P)|_{P_{> \sigma}} \cong \ZZ \to \pi_n \LL_P^{\ZZ}(\tau) \cong \ZZ \] 
is an isomorphism (note here that the finite limit commutes with base change to $\categ{D}(\ZZ)$). In fact, we will show that the map
\[ \alpha: \pi_n \varprojlim (\LL^{\ZZ}_P)|_{P_{> \sigma}} \cong \ZZ \to \bigoplus_{\tau > \sigma, \dim(\tau) = d+1} \pi_n \LL_P^{\ZZ}(\tau) \cong \bigoplus_{\tau > \sigma, \dim(\tau) = d+1} \ZZ \] 
is injective and sends $1$ to a vector of $\pm 1$s. For this, we use the Bousfield-Kan spectral sequence for a cosemisimplicial object set up in \ref{BKsseq}, applied to $(\LL^{\ZZ}_P)|_{P_{>\sigma}}$. We have
\[ E^1_{p,q} = \pi_q \left( \bigoplus_{\tau > \sigma, \dim(\tau) = d+q+1} \LL_P^{\ZZ}(\tau) \right) \Rightarrow \pi_{p+q} \varprojlim (\LL^{\ZZ}_P)|_{P_{> \sigma}}. \]
Because $(\LL_P)|_{P_{>\sigma}}$ is locally constant at $\Sigma^{\infty} S^n$, the $E^1$ page is concentrated on the $n$th row and is given by the cochain complex
\[ \begin{tikzcd}[row sep=4ex, column sep=4ex, text height=1.5ex, text depth=0.25ex]
\bigoplus_{\tau > \sigma, \dim(\tau) = d+1} \ZZ \ar{r}{\partial^0} & \bigoplus_{\tau > \sigma, \dim(\tau) = d+2} \ZZ \ar{r}{\partial^1} & ... \ar{r}{\partial^{n-d-1}} & \bigoplus_{\tau > \sigma, \dim(\tau) = n} \ZZ
\end{tikzcd} \]

Convergence of the spectral sequence then implies that map $\alpha$ fits into the short exact sequence
\[ \begin{tikzcd}[row sep=4ex, column sep=4ex, text height=1.5ex, text depth=0.25ex]
0 \ar{r} & \ZZ \ar{r}{\alpha} & \bigoplus_{\tau > \sigma, \dim(\tau) = d+1} \ZZ \ar{r}{\partial^0} & \im(\partial^0) \ar{r} & 0.
\end{tikzcd} \]

In particular, the rank of $\ker(\partial^0)$ equals one. As explained in \ref{PoincareDuality}, the boundary homomorphism $\partial^0$ is given up to sign as the transpose of the incidence matrix
\[ I: \bigoplus_{\tau> \sigma, \dim(\tau)=d+2} \ZZ \to \bigoplus_{\tau>\sigma, \dim(\tau)=d+1} \ZZ. \]
In particular, with $\FF_2$ coefficients $\partial^0 = I^T$. Note that we cannot have more than a single connected component of $P_{>\sigma}$ because that would increase the rank of $\ker(I^T)$ beyond one. Furthermore, for a vector $(k_\tau)$ to lie in the kernel of $\partial^0$, we must have for every $\kappa>\sigma$ of $\dim(\kappa)=d+2$ that $k_{\tau} = \pm k_{\tau'}$ for the two faces $\tau, \tau'$ of $\kappa$ above $\sigma$. Connectedness then ensures that $k_{\tau} = \pm k_{\tau'}$ for all $\tau, \tau'$, so a generator of $\ker(\partial^0)$ is given by a vector of $\pm 1$s, as desired.

Finally, for any $\tau > \sigma$, factorization of the map $\LL_P(\sigma) \to \LL_P(\tau)$ as
\[ \LL_P(\sigma) \simeq \Sigma^\infty S^n \to \varprojlim (\LL_P)|_{P_{> \sigma}} \simeq \Sigma^\infty S^n \oplus \Sigma^\infty S^{d+1} \to \LL_P(\tau) \simeq \Sigma^\infty S^n \]
shows that $\LL_P(\sigma) \to \LL_P(\tau)$ is an equivalence, completing the proof.
\end{proof}

\begin{rem} \label{rem:Hurewicz} By the stable Hurewicz theorem, $\Sigma^\infty_+ |P_{>\sigma}| \simeq \Sigma^\infty_+ S^{n-1-d}$ if and only if $H_{\ast}(|P_{>\sigma}|; \ZZ) \cong H_{\ast}(S^{n-1-d}; \ZZ)$. Therefore, in the definition of canonical stratification, we may as well replace $\LL_P$ by $\LL_P^{\ZZ}$. This has the practical effect of making the canonical stratification amenable to machine computation.
\end{rem}

\begin{nul}[\textbf{Algorithm for canonical stratification}] \label{algorithm} Proposition \ref{prp:extendingLocalConstancy} gives an iterative procedure for constructing the canonical stratification $\pi: P \to [n]$ of $P$, $n = \dim(P)$. Initialize a subposet $G \subset P$ to consist of all $\sigma \in P$ of dimension $n$. Iteratively add objects $\sigma \in P$ to $G$ according to the following rule:

\begin{itemize}
\item[$\bullet$] Suppose given $\sigma \in P$ with $\dim(\sigma) = d$ such that for all $\tau > \sigma$, we have $\tau \in G$. Then if $H_{\ast}(|P_{>\sigma}|; \ZZ) \cong H_{\ast}(S^{n-1-d}; \ZZ)$, add $\sigma$ to $G$.
\end{itemize}

This process terminates and defines a cosieve $G \subset P$, which we call the \emph{generic strata} of $P$. In view of Proposition \ref{prp:extendingLocalConstancy} and Remark \ref{rem:Hurewicz}, $G$ is the maximal cosieve in $P$ such that the restriction of $\LL_P$ to $G$ is locally constant at $\Sigma^{\infty} S^n$.

Next let $P^1 = P - G$. Then $\dim(P^1) < \dim(P)$, and we may repeat this procedure with $P^1$ in place of $P$.\footnote{We use implicitly that any sieve of a $\delta$-admissible poset is again $\delta$-admissible, because the restriction of a discrete cartesian fibration $\delta: P \to \Delta^{\inj}$ to any sieve remains a discrete cartesian fibration.} We thereby determine the maximal cosieve $G^1 \subset P^1$ such that the restriction of $\LL_{P^1}$ to $G^1$ is locally constant at $\Sigma^{\infty} S^{\dim(P^1)}$. 

Continuing, we end up with a filtration of $P$ by sieves
\[ P = P^0 \supset P^1 \supset \ldots \supset P^k = \emptyset \]
such that $(\LL_{P^i})|_{P^i - P^{i-1}}$ is locally constant at $\Sigma^{\infty} S^{\dim(P^i)}$. If we then define $\pi: P \to [n]$ by $\pi(\sigma) = \min\{ \dim(P^i) \: | \: \sigma \in P^i \}$, then $\pi$ is the canonical stratification of $P$.
\end{nul}

\begin{rem} \label{rem:reduceComputationPoincareDuality} Suppose we are in the situation of Proposition \ref{prp:extendingLocalConstancy} and wish to check if $H_{\ast}(|P_{>\sigma}|; \ZZ) \cong H_{\ast}(S^{n-1-d}; \ZZ)$ for $\sigma$ of dimension $d$ in order to determine if $\sigma$ belongs to the generic strata. Then we may exploit the Poincar\'{e} duality results of \ref{PoincareDuality} and \ref{orientablePoincareDuality} together with the universal coefficients theorems (stated for $A$ an abelian group)
\[ 0 \to \Ext^1(H_{i-1}(|P_{>\sigma}|; \ZZ) , A) \to H^i(|P_{>\sigma}|;A) \to \Hom(H_i(|P_{>\sigma}|; \ZZ), A) \to 0 \]
\[ 0 \to H_i(|P_{>\sigma}|;\ZZ) \otimes A \to H_i(|P_{>\sigma}|;A) \to \Tor^1(H_{i-1}(|P_{>\sigma}|;\ZZ), A) \to 0   \]
to reduce the amount of needed computation. Suppose $d<n-1$, $n = \dim(P)$ (the case $d = n-1$ being only a check as to whether $P_{>\sigma}$ has exactly two elements). Then the check for membership of $\sigma$ in the generic strata proceeds as follows (where we terminate with a \emph{negative} response if at any point the computed quantity fails to be as indicated):
\begin{enumerate}
	\item[0.] By Corollary \ref{cor:LinkAdmissible} and applying the Dold-Kan correspondence, a chain complex $C_{\ast}$ for computing $H_{\ast}(|P_{>\sigma}| ; \ZZ)$ is given by letting $C_{i}$ be the free abelian group on $\tau \in P_{>\sigma}$ with $\dim(\tau) - d- 1 = i$ and defining the boundary homomorphisms by the chosen $\delta$-structure.
	
	\item[1.] First compute $H_0(|P_{>\sigma}|; \ZZ) = \ZZ$, i.e., show that $P_{>\sigma}$ has a single connected component.

	\noindent -- By Poincar\'{e} duality for $\FF_2$-coefficients, this shows that $H_{n-1-d}(|P_{>\sigma}|; \FF_2) = \FF_2$.

    \item[2\text{a}.] If $n-1-d = 1$, terminate.

    \noindent -- We are done because the universal coefficients theorem for homology shows that $H_1(|P_{>\sigma}|; \ZZ) = \ZZ$.
	
	\item[2\text{b}.] If $n-1-d = 2$, compute the Euler characteristic $\chi(P_{>\sigma}) = 2$, and terminate.

	\noindent -- Let $r[p] = \rank H_1(|P_{>\sigma}|; \FF_p)$ and $s[p] = \rank H_2(|P_{>\sigma}|; \FF_p)$ for any prime $p$. Then $\chi(P_{>\sigma}) = 1-r[2]+s[2] = 2-r[2] = 2$ shows that $r[2] = 0$, hence as in step (2c) below we deduce that $H_2(|P_{>\sigma}|; \ZZ) = \ZZ$. Then $\chi(P_{>\sigma}) = 2 - r[p] = 2$ shows that $r[p] = 0$ for all primes $p$, hence $H_1(|P_{>\sigma}|; \ZZ) = 0$, and we are done.
	
	\item[2\text{c}.] If $n-1-d>2$, compute $H_1(|P_{>\sigma}|; \ZZ) = 0$.

	\noindent -- This shows $H_{n-d-2}(|P_{>\sigma}|; \FF_2) \cong H_1(|P_{>\sigma}|; \FF_2) = 0$, so by the universal coefficients theorem for homology we deduce that $H_{n-1-d}(|P_{>\sigma}|; \ZZ) = \ZZ$. By \ref{orientablePoincareDuality}, we now have Poincar\'{e} duality integrally.
	
	\item[2\text{c}--i.] If $n-1-d>2$ is odd, compute $H_i(|P_{>\sigma}|; \ZZ) = 0$ for $1<i<(n-d)/2$, and terminate.

	\noindent -- Then by Poincar\'{e} duality and the universal coefficients theorem for cohomology, we also have $H_{j}(|P_{>\sigma}|; \ZZ) \cong H^{n-1-d-j}(|P_{>\sigma}|; \ZZ) = 0$ for $(n-d)/2 \leq j < n-d-1$.

	\item[2\text{c}--ii.] If $n-1-d>2$ is even, compute $H_i(|P_{>\sigma}|; \ZZ) = 0$ for $1<i<(n-1-d)/2$ and $H_{(n-1-d)/2}(|P_{>\sigma}|; \ZZ) = 0$ \textit{or} $\chi(P_{>\sigma}) = 2$, and terminate.

	\noindent -- As in (3a), we then also have $H_{j}(|P_{>\sigma}|; \ZZ) \cong H^{n-d-1-j}(|P_{>\sigma}|; \ZZ) = 0$ for $(n-1-d)/2 < j < n-d-1$. To show that the middle homology group $H_{(n-1-d)/2}(|P_{>\sigma}|; \ZZ)=0$ using that $\chi(P_{>\sigma}) = 2$, we can argue as follows: let $r[p] = \rank H_{(n-1-d)/2}(|P_{>\sigma}|; \FF_p)$ for any prime $p$. Then $\chi(P_{>\sigma}) = 2-r[p] = 2$, hence $r[p] = 0$ and $H_{(n-1-d)/2}(|P_{>\sigma}|; \ZZ) = 0$.
	
\end{enumerate}

In particular, we emphasize that no linear algebraic computation (in the sense of computing Smith normal form) is necessary in the case $d \geq n-3$.
\end{rem}

\begin{rem}[Computing the fundamental category] \label{rem:categoryOfStrata} Let $A$ be a poset and $\Pi: \sC \to A$ be a $A$-stratified space, i.e. a conservative functor (Remark \ref{rem:localization}). Then we have the homotopy category $\Pi_1: h_1 \sC \to A$ of $\sC$, where every mapping space $\Map_{\sC}(x,y)$ is replaced by its set of connected components (in the $\infty$-categorical setup, $h_1 \sC$ should be thought of as the \emph{fundamental category} \cite{woolfFund} of the stratified space). Further collapsing every nonempty hom-set in $h_1 \sC$ to a single point yields a poset $h_0 \sC \to A$. $h_1 \sC$ and $h_0 \sC$ are the bottom two stages of the stratified Postnikov tower of $\Pi$ \cite[\S 2.3]{Exodromy}.

Given $n = \dim(P)$ and $\Pi^{\can}: \sP^{\can} \to [n]$, we can attempt to compute $h_1 \sP^{\can}$ and $h_0 \sP^{\can}$ given only the canonical stratification $\pi: P \to [n]$. To compute $h_0 \sC$,
\begin{enumerate} \item Compute the connected components of each fiber $P_{\pi = i}$. These then form the objects $[\sigma]$ of $h_0 \sC$, where $[\sigma]$ denotes equivalence classes of objects $\sigma \in P$.
\item Given $[\sigma]$ and $[\tau]$ such that $\pi(\sigma) < \pi(\tau)$, we have $[\sigma] < [\tau]$ in $h_0 \sC$ if and only if there exists a choice of representatives $\sigma \in [\sigma]$ and $\tau \in [\tau]$ such that $\sigma < \tau$ in $P$.
\end{enumerate}

As for $h_1 \sC$, suppose given objects $[\sigma]$ and $[\tau]$:
\begin{enumerate}
	\item If $[\sigma] = [\tau]$, then the computation of $\Hom_{h_1 \sC}([\sigma], [\sigma])$ amounts to a fundamental group calculation, for which we can give a generators and relations presentation using standard methods (which is not really satisfactory).
	\item On the other hand, suppose $[\sigma] < [\tau]$ in $h_0(\sC)$ with $\pi(\sigma) = i$ and $\pi(\tau) = j$. Let
    $$P_{\pi \in \{i,j\}} \coloneq \{ i < j\} \times_{[n]} P.$$
    Then $\Hom_{h_1 \sC}([\sigma], [\tau])$ is given by the subset of connected components $[\sigma' < \tau']$ in the poset of sections $\Fun_{/\{i<j\}}(\{i<j\},P_{\pi \in \{i,j\}})$ such that $\sigma' \in [\sigma]$ and $\tau' \in [\tau]$.
\end{enumerate}
\end{rem}

\subsection{Properties}

In the remainder of this section, we collect a few further theoretical observations concerning stratification. None of these results will be needed for our implementation of the stratification algorithm in \S \ref{sec:st-alg}. Our first result is the combinatorial avatar of the following fact concerning manifolds with boundary: given a compact smooth manifold $M$ with boundary $\partial M$, the inclusion $M - \partial M \to M$ is a homotopy equivalence because of the existence of a collar neighborhood of $\partial M$.

\begin{prp} Let $P$ be a finite $\delta$-admissible poset, let $G$ be its generic strata, and let $U \supset G$ be a cosieve in $P$ that contains $G$. Suppose that $(\LL_P)|_{U-G} = 0$. Then the inclusion $G \to U$ is cofinal.
\end{prp}
\begin{proof} Given $\sigma \in U$, let $l(\sigma)$ be the minimum length $l$ taken across all chains
\[ \sigma = \tau_0 < \tau_1 < \tau_2 < \ldots < \tau_l \]
such that $\dim(\tau_{i+1}) = \dim(\tau_i) + 1$ and $\tau_l \in G$, and let $l(U) = \max\{ l(\sigma): \sigma \in U \}$.

Our strategy is to proceed by induction on $l(U)$ and use Quillen's Theorem A (\cite[Thm.~4.1.3.1]{HTT}). If $l(U) = 0$, then $G = U$ and there is nothing to prove. If $l(U) = 1$, then for every $\sigma \in U - G$, we have that $G \times_U U_{\geq \sigma} = P_{>\sigma}$, which is weakly contractible by the hypothesis that $\LL_P(\sigma) = 0$ and the identification $\LL_P(\sigma) \simeq \Sigma^{\infty} S^d |P_{>\sigma}|$ of Theorem \ref{thm:shiftingLink}. Invoking Quillen's Theorem A then completes the proof in this case.

Now suppose that the claim is proven for all triples $(P,G,U)$ as in the theorem statement with $l(U) \leq l$, and suppose $l(U) = l+1$. Let $\sigma \in U - G$. Let $G_{> \sigma} \coloneq G \times_U U_{\geq \sigma} = G \cap U_{\geq \sigma}$. We want to show that $G_{> \sigma}$ is weakly contractible. Equivalently, by our assumption that $\LL_P(\sigma) = 0$, $P_{> \sigma}$ is weakly contractible, so it is enough to show that the cosieve inclusion $G_{> \sigma} \to U_{> \sigma} = P_{> \sigma}$ is cofinal. Consider the triple $(P_{> \sigma}, G_{> \sigma}, P_{> \sigma})$. By Corollary \ref{cor:LinkAdmissible}, $P_{> \sigma}$ is $\delta$-admissible. Moreover, by Proposition \ref{prp:shiftedOrientationSheaf},
\[ \LL_{P_{> \sigma}} \simeq \Sigma^{-(d+1)} \left( \left. \LL_P \right|_{P_{> \sigma}} \right), \]
so the restriction of $\LL_{P_{> \sigma}}$ to $G_{> \sigma}$, resp. $P_{> \sigma} - G_{> \sigma}$ is locally constant at $\Sigma^{\infty} S^{n-1-d}$, resp. 0. Now because $l(P_{>\sigma}) \leq l$, we are done by induction. 
\end{proof}

\begin{nul}[Lefschetz-Poincar\'{e} duality for the generic strata] \label{PoincareDualityGenericStrata} Let $G$ be the generic strata of $P$. Let $G_{\geq d}$, resp. $G_d$ be the subposets of $G$ consisting of $\sigma$ with $\dim_P(\sigma) \geq d$, resp. $\dim_P(\sigma) = d$. Then $G$ admits a filtration
\[ \emptyset \subset G_{\geq n} \subset G_{\geq n-1} \subset \ldots \subset G_{\geq 0} = G \]
where for each inclusion $G_{\geq d+1} \subset G_{\geq d}$ we have the pushout square
\[ \begin{tikzcd}[row sep=4ex, column sep=4ex, text height=1.5ex, text depth=.5ex]
\bigsqcup_{\sigma \in G_d} P_{> \sigma} \ar{r} \ar{d} & G_{\geq d+1} \ar{d} \\
\bigsqcup_{\sigma \in G_d} P_{\geq \sigma} \ar{r} & G_{\geq d}
\end{tikzcd} \]
obtained by iterative application of Theorem \ref{thm:MayerVietoris} over all $\sigma \in G_d$. Upon applying the functor $\Sigma^{\infty}_+ \left| - \right|$, we have the pushout square
\[ \begin{tikzcd}[row sep=4ex, column sep=4ex, text height=1.5ex, text depth=.5ex]
\bigsqcup_{\sigma \in G_d} \Sigma^\infty_+ |S^{n-1-d}|  \ar{r} \ar{d} & \Sigma^\infty_+ |G_{\geq d+1}| \ar{d} \\
\bigsqcup_{\sigma \in G_d} \Sigma^\infty_+ |D^{n-d}| \ar{r} & \Sigma^\infty_+|G_{\geq d}|,
\end{tikzcd} \]
thereby obtaining a \emph{stable cell decomposition} of $\Sigma^\infty_+|G|$, which is dual to the unstable cell decomposition of $|P|$ defined by its $\delta$-structure in the sense that every $\sigma \in G$ of $P$-dimension $d$ corresponds to a cell of $G^{\op}$-dimension $n-d$. Taking cellular homology with coefficients in a ring $R$, we obtain a cochain complex $D^{\bullet}$
\[ \bigoplus_{\sigma_0 \in G_0} R \xto{\partial^0} \bigoplus_{\sigma_1 \in G_1} R \xto{\partial^{1}} \ldots \xto{\partial^{n-2}} \bigoplus_{\sigma_{n-1} \in G_{n-1}} R \xto{\partial^{n-1}} \bigoplus_{\sigma_n \in G_n} R \]
such that $H^i(D) \cong H_{n-i}(|G|;R)$.

We can go further and prove a Lefschetz-Poincar\'{e} duality result in our setting. To formulate this, let $L$, resp. $K$ be the semisimplicial sets $(\Delta^{\inj})^{\op} \to \Set$ that as functors classify the discrete cartesian fibrations $\delta|_{P-G}: P-G \to \Delta^{\inj}$, resp. $\delta: P \to \Delta^{\inj}$. Let $C_{\ast}(K,L)$ be the relative chain complex under the Dold-Kan correspondence, taken with $R$ coefficients, so $C_{\ast}(K,L)$ equals
\[ \bigoplus_{\sigma_n \in G_n} R \xto{\partial_n} \bigoplus_{\sigma_{n-1} \in G_{n-1}} R \xto{\partial_{n-1}} \ldots ... \xto{\partial_2} \bigoplus_{\sigma_1 \in G_1} R \xto{\partial_1} \bigoplus_{\sigma_0 \in G_0} R. \]
Then by the same analysis as in \ref{PoincareDuality}, we identify the cochain differential $\partial^d$ as, up to sign, the transpose of $\partial_{d+1}$. Taking $R = \FF_2$, we deduce that $H_{n-i}(|G|;\FF_2) \cong \widetilde{H}^i(|P|/|P-G|;\FF_2)$. Moreover, if $(\LL_P)|_G$ is orientable in the sense that the monodromy action on $\Sigma^\infty S^n$ is trivial, then we have duality at the level of stable homotopy:
\[ \Sigma^\infty_+ |P|/|P-G| \simeq \varprojlim_{\sigma \in G} \LL_P(\sigma) \simeq F(\Sigma^{\infty}_+|G|,\Sigma^\infty S^n). \]
\end{nul}

\begin{nul}[Shriek pullback vs. star pullback] \label{shriekVsStar} Let $F: P \to \Sp$ be a sheaf on $P$. Then there are at least two reasonable definitions for a map of posets $\pi: P \to Q$ to be a $F$-stratification of $P$:
\begin{enumerate}
	\item For every $x \in Q$, the restriction of $F$ to the fiber $P_{\pi = x} \coloneq \{ x\} \times_Q P$ is locally constant.
	\item For every $x \in Q$, let $i_x: P_{\pi \leq x} \coloneq Q_{\leq x} \times_Q P \to P$ denote the sieve inclusion. Then $((i_x)^! F)|_{P_x}$ is locally constant, where $(i_x)^!$ is defined as in \ref{shriekFiberSequence}.
\end{enumerate}

 Taking $F = \LL_P$, we note that the map $\pi: P \to [n]$ of Definition \ref{dfn:strat} satisfies the second condition but not generally the first. To explain, recall from Proposition \ref{prp:shriekFunctoriality} that given a sieve inclusion $i: Q \to P$ with complementary cosieve inclusion $j: P-Q \to P$, we have $\LL_Q \simeq i^! \LL_P$ and the resulting fiber sequence
\[ \LL_Q \simeq i^! \LL_P \to \left. \left( \LL_P \right) \right|_Q = i^\ast \LL_P \to \left. \left( j_{\ast} (\left. \LL_P \right|_{P-Q}) \right) \right|_{Q} = (i^\ast j_\ast j^\ast)(\LL_P). \]

Suppose that $P-Q = G$ is the generic strata. Then even though $(\LL_P)|_G$ is locally constant, it may fail to be the case that $(i^\ast j_\ast)(\LL_P|_G)$ is locally constant, so the question of local constancy of $\LL_{P-G}$ differs from that of $(\LL_P)|_{P-G}$. For example, consider the poset of simplices of the ordered simplicial complex with vertices
\[ \{0,1,2,3,4,5,6,7, 8 \} \]
and simplices
\[ \{(0,1,3),(0,2,3), (1,3,5), (2,3,4), (2,4,6), (3,4,5), (4,5,7), (4,6,7), (3,4,8) \}. \]

Then upon removal of the generic strata, we have the $1$-dimensional sub-simplicial complex which is the disjoint union of two circles
\[ \begin{tikzcd}[row sep=2ex, column sep=2ex, text height=1.5ex, text depth=0.25ex]
& 0 \ar{r} \ar{dl} & 1 \ar{dr} \\
2 \ar{dr} & & & 5 \ar{dl} \\
& 6 \ar{r} & 7
\end{tikzcd}, \quad
\begin{tikzcd}[row sep=2ex, column sep=2ex, text height=1.5ex, text depth=0.25ex]
3 \ar{rd} \ar{dd}  \\
& 8 \\
4 \ar{ru}
\end{tikzcd}. \]

Therefore, after collapsing connected components of each strata to points, we have the poset
\[ \begin{tikzcd}[row sep=4ex, column sep=4ex, text height=1.5ex, text depth=0.25ex]
\bullet \ar{r} & \bullet \\
\bullet \ar{ru} \ar{r} & \bullet
\end{tikzcd} \]
over $\{1<2\} \subset [2]$. However, one can compute $\LL^{\ZZ}_P(3) \to \LL^{\ZZ}_P(34)$ to not be a local homology equivalence; indeed, $\pi_2(\LL^{\ZZ}_P(3))$ has rank 1 while $\pi_2(\LL^{\ZZ}_P(34))$ has rank 2.
\end{nul}



\begin{nul}[Functoriality of the generic strata] In general, the canonical stratification is not functorial with respect to maps of posets. However, we can at least say the following.

\begin{lem} \label{lm:genericStratumFunctoriality} Let $i: Q \to P$ be a sieve inclusion of $\delta$-admissible posets with $n = \dim(Q) = \dim(P)$, let $G_Q$ resp. $G_P$ be the generic strata of $Q$ resp. $P$, and let $\sigma \in Q$. Suppose that $\sigma$ is in $G_P$. Then $\sigma$ is in $G_Q$ if and only if $Q_{>\sigma} = P_{> \sigma}$.
\end{lem}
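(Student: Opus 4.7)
The plan is to prove each direction of the biconditional separately.

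The ($\Leftarrow$) direction is straightforward. Assuming $Q_{>\sigma} = P_{>\sigma}$, for any $\tau \in P_{>\sigma}$ we have $P_{>\tau} \subseteq P_{>\sigma} = Q_{>\sigma} \subseteq Q$, hence $Q_{>\tau} = P_{>\tau}$. A downward induction on $\dim_P \tau$, with the top-dimensional case being immediate from the initialization of $G_Q$, then shows every $\tau \in P_{>\sigma}$ lies in $G_Q$: the hypothesis $\sigma \in G_P$ supplies $|Q_{>\tau}| = |P_{>\tau}| \simeq S^{n-1-\dim\tau}$ stably, and the inductive hypothesis handles the cofaces. Applying the same reasoning to $\sigma$ itself yields $\sigma \in G_Q$.

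For ($\Rightarrow$), suppose $\sigma \in G_Q$. It suffices to show that every top-dimensional $\tau \in P_{>\sigma}$ lies in $Q$: once known, any $\kappa \in P_{>\sigma}$ lies below some such $\tau$ (as $\kappa \in G_P$ forces a top-dimensional successor of $\kappa$, necessarily in $P_{>\sigma}$), so $\kappa \in Q$ by the sieve property. The proof rests on two observations. First, for any $\rho \in Q$ with $\sigma \leq \rho$ and $\dim \rho = n-1$, inheritance forces $\rho \in G_Q \cap G_P$, so $|Q_{>\rho}|$ and $|P_{>\rho}|$ are both stably $S^0$; by the analysis in Prop.~\ref{prp:extendingLocalConstancy}, $Q_{>\rho}$ and $P_{>\rho}$ are each discrete posets consisting of exactly two top-dimensional simplices, and $Q_{>\rho} \subseteq P_{>\rho}$ then forces equality. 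In particular, both $n$-dimensional cofaces of such $\rho$ in $P$ lie in $Q$.

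Second, introduce the top-dimensional dual graph $\Gamma$ of $P_{>\sigma}$: its vertices are the top-dimensional $\tau \in P_{>\sigma}$, and its edges join $\tau_1, \tau_2$ whenever they share a codimension-one facet $\rho$ with $\sigma \leq \rho$. Because every such $\rho$ has exactly two top-dimensional cofaces in $P$ (the first observation applied globally to $P$), the top-degree $\FF_2$ chain computation of $|P_{>\sigma}|$ identifies $\ker(\partial \colon C_{n-1-d} \to C_{n-2-d};\FF_2)$ with the space of $\FF_2$-valued functions that are constant on the connected components of $\Gamma$. On the other hand, $\sigma \in G_P$ gives $H_{n-1-d}(|P_{>\sigma}|;\FF_2) \cong \FF_2$, so $\Gamma$ has exactly one connected component.

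To finish: since $\sigma \in G_Q$, $|Q_{>\sigma}|$ is stably a sphere of dimension $n-1-d$, and (for $d < n-1$) must therefore contain at least one top-dimensional $\tau_0$; the boundary case $d = n-1$ is handled separately by pure cardinality, as $P_{>\sigma}$ and $Q_{>\sigma}$ are then already discrete of size two. For any other top-dimensional $\tau' \in P_{>\sigma}$, choose a path $\tau_0, \tau_1, \ldots, \tau_k = \tau'$ in $\Gamma$ along shared facets $\rho_i$. If $\tau_i \in Q$ then $\rho_i \in Q$ (face of $\tau_i$ with $Q$ a sieve), hence $\rho_i \in Q_{\geq \sigma}$, and applying the first observation to $\rho_i$ forces $\tau_{i+1} \in Q$. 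Inductively $\tau' \in Q$, completing the proof. I expect the main obstacle to be the connectivity argument for $\Gamma$: it relies essentially on the \emph{global} force of the hypothesis $\sigma \in G_P$, which supplies the ``exactly two cofaces'' property at every codimension-one facet above $\sigma$ and thereby mirrors the closed-pseudomanifold condition needed for the chain-level cycle calculation to pin down $\ker \partial$.
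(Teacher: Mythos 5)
Your proof is correct, and the $(\Leftarrow)$ direction is essentially the paper's argument with the induction made more explicit. The $(\Rightarrow)$ direction is genuinely different. The paper observes that, by the inductive hypothesis, $Q_{>\sigma}$ is simultaneously a sieve and a cosieve in $P_{>\sigma}$, hence a union of connected components; writing $P_{>\sigma}=Q_{>\sigma}\sqcup R$ then gives $\Sigma^\infty_+ S^{n-1-d}\simeq \Sigma^\infty_+ S^{n-1-d}\oplus\Sigma^\infty_+ |R|$, which forces $R=\emptyset$. Your argument instead builds the dual graph $\Gamma$ on top-dimensional simplices above $\sigma$ and observes that the pseudomanifold condition (every codimension-one facet above $\sigma$ in $G_P$ has exactly two top cofaces) identifies $\ker(\partial;\FF_2)$ in top degree with $\FF_2^{\pi_0(\Gamma)}$, so $\sigma\in G_P$ pins down $\Gamma$ connected; then the ``two-coface'' rigidity transports membership in $Q$ along a $\Gamma$-path from a top simplex of $Q_{>\sigma}$ to every top simplex of $P_{>\sigma}$, and the sieve property pulls the rest of $P_{>\sigma}$ into $Q$. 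What each approach buys: the paper's clopen-plus-stable-splitting argument is shorter, self-contained at the level of Algorithm 4.12, and avoids re-deriving the combinatorial structure of the link; yours is more elementary (no stable splitting needed, only $\FF_2$-homology of a chain complex) and makes the pseudomanifold geometry visible, at the cost of treating the $d=n-1$ boundary case separately and importing the discreteness/two-coface analysis from the proof of Prop.~\ref{prp:extendingLocalConstancy}. One small thing worth tightening in a final write-up: the existence of a top-dimensional simplex in $Q_{>\sigma}$ for $d<n-1$ is not an immediate consequence of the stable sphere condition alone; rather, it follows from the iterative construction of $G_Q$, which only admits $\sigma$ after all of $Q_{>\sigma}$ is already in $G_Q$, and every nonmaximal element of $G_Q$ therefore has a strict successor in $G_Q$, terminating in top dimension.
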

\begin{proof} First note that $\dim_Q(\sigma) = \dim_P(\sigma)$ because $i$ is a sieve inclusion; let $d$ denote this common dimension. Because $\dim(Q) = \dim(P)$, the claim is obviously true if $d=n$, so let us suppose $d<n$. Our assumption that $\sigma \in G_P$ is equivalent to the two conditions:
\begin{enumerate}
	\item For all $\tau > \sigma$, we have that $\tau \in G_P$.
	\item $\Sigma^{\infty}_+ |P_{>\sigma}| \simeq \Sigma^{\infty}_+ S^{n-1-d}$.
\end{enumerate}

Let us suppose as an inductive hypothesis that the claim holds for all $\tau > \sigma$. For the ``if'' statement, suppose that $Q_{>\sigma} = P_{>\sigma}$. Then for all $\tau > \sigma$, $Q_{>\tau} = P_{>\tau}$, so by induction $\tau \in G_Q$. Consequently, because $\Sigma^{\infty}_+|Q_{>\sigma}| \simeq \Sigma^{\infty}_+|P_{>\sigma}| \simeq \Sigma^{\infty}_+ S^{n-1-d}$, we get that $\sigma \in G_P$.

Conversely, for the ``only if'' statement, suppose that $\sigma \in G_Q$. Then for all $\tau > \sigma$ with $\tau \in Q$, by induction we have that $Q_{>\tau} = P_{>\tau}$. Thus, $Q_{>\sigma} \subset P_{>\sigma}$ is a cosieve. But $Q_{>\sigma} \subset P_{>\sigma}$ is also a sieve because $i$ is. A subposet is both a sieve and a cosieve if and only if it is a connected component. Thus, we get that $P_{>\sigma} = Q_{>\sigma} \sqcup R$, and
\[  \Sigma^{\infty}_+ |P_{>\sigma}| \simeq \Sigma^{\infty}_+ |Q_{>\sigma}| \oplus \Sigma^{\infty}_+ |R|. \]
Because both $\sigma \in G_P$ and $\sigma \in G_Q$, the map $\Sigma^{\infty}_+ |Q_{>\sigma}| \to \Sigma^{\infty}_+ |P_{>\sigma}|$ is an equivalence with cofiber $\Sigma^{\infty}_+ |R|$, so we moreover have that $\Sigma^{\infty}_+ |R| \simeq 0$, which forces $R = \emptyset$ and $P_{>\sigma} = Q_{>\sigma}$.
\end{proof}

Now suppose that we have a filtration by sieve inclusions
\[ P_0 \to P_1 \to \ldots \to P_{m-1} \to P_m = P \]
with $\dim(P_i) = \dim(P)$ for all $i$ and $P$ equal to its own generic strata (e.g., the poset of simplices of a triangulation of a closed manifold). Let $G_i$ be the generic strata of $P_i$. Then by Lemma \ref{lm:genericStratumFunctoriality}, for $\sigma \in P_i$, we have $\sigma \in G_i$ if and only if $(P_{i})_{>\sigma} = P_{>\sigma}$, so in particular if $\sigma \in G_i$ then $\sigma \in G_j$ for all $j \geq i$. We thus obtain a filtration of the generic strata
\[ G_0 \to G_1 \to \ldots \to G_{m-1} \to G_m = P. \]

As a central computational tool in applied topology, one has the persistent homology of a filtered simplicial complex, with the filtration typically defined by varying a scaling parameter; at maximal scale, one is left with a manifold or manifold with boundary. As a variant, we propose to instead compute the persistent homology of the induced filtration of generic strata, with the expectation that, for certain applications, the contributions of the non-generic strata to persistent homology are undesirable and should be discarded.
\end{nul}

\clearpage

\section{Stratification algorithm}\label{sec:st-alg}

In this section, we discuss some of the implementation-level details and present pseudocode for the canonical stratification algorithm as described in Algorithm \ref{algorithm}. We first discuss the algorithm in the case of arbitrary dimension. Distinct implementations for special lower-dimensional cases are then discussed in \S \ref{sec:3d-alg}.


Let $K$ be a finite $n$-dimensional abstract simplicial complex, let $P$ be its poset of simplices, and let $\pi: P \to [n]$ be the canonical stratification (Definition \ref{dfn:strat}). The \emph{$i$-strata} was defined to be the subposet $P_{\pi=i} \coloneq \{i\} \times_{[n]} P$. In general, $P_{\pi=i}$ consists of multiple connected components, which we are interested in determining. Let us call a connected component of $P_{\pi=i}$ an \emph{$i$-stratum}.

Recall from \ref{algorithm} that the proposed algorithm is an iterative algorithm which accepts $K$ as an initial input. On the first iteration, simplices lying in the $n$-strata $P_{\pi=n}$ are identified and assigned to their particular $n$-stratum. Then the simplices in the $n$-strata are removed, with the remaining simplices forming a subcomplex $K^1$ of lower dimension $\dim(K^1) < \dim (K)$. $K^1$ is then used as the simplicial complex for the next iteration. For each subsequent iteration, this procedure is repeated, and the algorithm terminates after assigning each simplex to the $i$-stratum to which it belongs.

To reduce the potential for confusion, we henceforth adopt the following convention:
\begin{itemize}
    \item[($\ast$)] We call the dimension of the subcomplex at a particular iteration the \emph{top dimension} and denote it by $n_{\cur}$. 
\end{itemize}

\noindent We will also make use of the following terminology:

\begin{itemize}
\item[($\ast$)] Given a face-coface pair $\sigma \subset \tau$ such that $\dim(\tau) - \dim(\sigma) = 1$, we call $\tau$ an \emph{immediate coface} of $\sigma$ and $\sigma$ an \emph{immediate face} of $\tau$.
\item[($\ast$)] We define the \emph{codimension} of a simplex to be the difference between its dimension and $n_{\cur}$; for instance, the codimension of a $d$-simplex is $k = n_{\cur} - d$. $c_k$-$simplices$ are simplices of codimension $k$.
\end{itemize}

Algorithm \ref{alg:main-alg} shows pseudocode containing the general structure of the algorithm.

\begin{algorithm}[H]
\caption{Canonical Stratification}\label{alg:main-alg}
\begin{algorithmic}[1]
\For{$n_{\cur} \gets n \text{ \textbf{to} } 1 \text{ \textbf{step} } -1$}
\If {\texttt{complex has at least one $c_0\text{-}simplex$}}
\State \texttt{assign all $c_0\text{-}simplices$ to individual strata}
\For{$k \gets 1 \text{ \textbf{to} } n_{\cur}$}
\ForAll{\texttt{$simplex$ $\in$ $c_k\text{-}simplices$}}
\If{\texttt{$simplex$ belongs to exactly one $n_{\cur}$-stratum}}
\State \texttt{Add $simplex$ to the $n_{\cur}$-stratum}
\ElsIf{\texttt{$simplex$ belongs to multiple $n_{\cur}$-strata}}
\State \texttt{Merge strata into a single stratum}
\State \texttt{Add $simplex$ to merged $n_{\cur}$-strata}
\EndIf
\EndFor
\EndFor
\State \texttt{Remove $n_{\cur}$-strata}
\EndIf
\EndFor
\end{algorithmic}
\end{algorithm}


A simplex belongs to the $n_{\cur}$-strata, specifically to the stratum of its immediate cofaces, if the following conditions are met:
\begin{enumerate}
\item All of the immediate cofaces of the simplex lie in the $n_{\cur}$-strata. Moreover, except for the $c_1$-simplex case, all of the immediate cofaces of the simplex lie in the same $n_{\cur}$-stratum.
\item The homology of the small link of the simplex equals the homology of a ($k-1$)-sphere. 
\end{enumerate}

Condition (1) is tested first because it is computationally cheaper. Pseudocode for this is shown in Algorithm \ref{alg:uniquecoface}. Note also that the connectedness check is exempted for $c_1$-simplices. Consequently, $c_1$-simplices are the only simplices that can belong to the $n_{\cur}$-strata while having cofaces that belong to multiple $n_{\cur}$-strata, so this is the only point where any merging of strata can take place. Therefore, we can combine the $c_1$-simplex case with the $c_0$-simplex case to entirely avoid merging of strata. We discuss this in \S \ref{sec:codim01}. 

If condition (1) is met, condition (2) is then tested. Recall that the small link of a simplex is the set of all cofaces of the simplex (Definition \ref{dfn:link}). Pseudocode for finding the small link is shown in Algorithm \ref{alg:getsmalllink}.

Finally, we follow the procedure outlined in Remark \ref{rem:reduceComputationPoincareDuality}. If codimension $\leq 3$, then there are tricks for checking $n_{\cur}$-strata membership that do not involve computing homology. We break out these cases as separate algorithms, described in \S \ref{sec:3d-alg}. If codimension $>3$, the algorithm computes some integral homology groups to be zero. Specifically, given the small link of $\sigma$, the algorithm constructs the chain complex
\[ \begin{tikzcd}[row sep=4ex, column sep=4ex, text height=1.5ex, text depth=2ex]
\bigoplus\limits_{\shortstack{$\scriptstyle \tau > \sigma,$ \\ $\scriptstyle \dim(\tau)-\dim(\sigma)$ \\ $\scriptstyle = \left \lceil k/2 \right \rceil + 1$ }} \ZZ \ar{r} & \bigoplus\limits_{\shortstack{$\scriptstyle \tau > \sigma,$ \\ $\scriptstyle \dim(\tau)-\dim(\sigma)$ \\ $\scriptstyle = \left \lceil k/2 \right \rceil$}} \ZZ \ar{r} &  \ldots \ar{r} & \bigoplus\limits_{\shortstack{$\scriptstyle \tau > \sigma,$ \\ $\scriptstyle \dim(\tau)-\dim(\sigma)$ \\ $\scriptstyle = 2$}} \ZZ \ar{r} & \bigoplus\limits_{\shortstack{$\scriptstyle \tau > \sigma$ \\ $\scriptstyle \dim(\tau)-\dim(\sigma)$ \\ $\scriptstyle = 1$}} \ZZ
\end{tikzcd} \]
\\

and computes the homology to be zero except at the ends (where homology is not computed). We note here that to define the boundary maps of the chain complex, we should be given a fixed global ordering of the vertices of the simplicial complex. Computing homology is a well-documented procedure and is implemented in multiple TDA libraries, so we will not enter into a deeper discussion of the mechanics of this step here.

\subsection{Data structures}\label{sec:data-structure}

The simplicial complex is represented as a graph with each simplex as a node. Each simplex only knows its immediate cofaces and its immediate faces. Note that the graph for an $n$-complex is a multi-partite graph with $n$ partitions, with one partition for every simplex dimension. Furthermore, to improve access time the simplices of an $n$-complex are stored as objects in $n$ many lists, one for each dimension.

There are other, more memory efficient storage formats like the simplex tree \cite{Boissonnat2014}. However, many parts of the algorithm require performing a graph traversal as well as quick access to all simplices in a certain dimension. Therefore, we use a structure that allows better access during runtime instead of a memory efficient data structure.

The membership of simplices in strata is stored as a map, $M | simplex \to strata$. We choose the map $M$ because the majority of lookups in the algorithm are $simplex \to strata$. Note that constructing the sets of members for each strata from $M$ is a linear operation.

To remove $n_{\cur}$-strata, we keep and update a list of the cofaces of each simplex in the remaining subcomplex. This approach is used instead of removing objects from lists and trees for computational performance and to avoid modifying the input. At the end of each iteration in $n$, this list is updated by removing members of the list. More memory efficient but slower alternatives would involve computing the list of cofaces on the fly, or removing simplices from the list and trees.

Finally, let us note that we have not opted to store any of the finer structure afforded by the canonical stratification, such as the poset or category structure on the set of strata as discussed in Remark \ref{rem:categoryOfStrata}. 

\subsection{General subroutines} \label{sec:subroutines}

\subsubsection{Unique $n_{\cur}$-stratum subroutine}
This subroutine checks if all of the immediate cofaces of a simplex lie in the same strata.
For every immediate coface of the simplex, there are only three cases.
\begin{itemize}
\item The coface is unassigned: then break the loop as the simplex is not in the $n_{\cur}$-strata.
\item The coface is assigned but different from the previous: then break the loop as the simplex is not in the $n_{\cur}$-strata.
\item The coface is assigned and the same as the previous: continue.
\end{itemize}

Algorithm \ref{alg:uniquecoface} shows pseudocode for this schema, returning the unique stratum if it exists or $NULL$ otherwise.
\begin{algorithm}[H]
\caption{Finding unique stratum among immediate cofaces}\label{alg:uniquecoface}
\begin{algorithmic}[1]
\State $M \gets $ \texttt{Map from simplex to stratum. Default $NULL$. }
\newline
\Procedure{uniqueStratumAmongCofaces}{simplex}
  \State $stratum \gets NULL$
  \ForAll{$coface \in simplex.getImmediateCofaces()$}
    \If{$M[coface] = NULL$}
      \State $stratum \gets NULL$
      \State \bf{break}
    \ElsIf{$stratum \neq NULL$ \bf{and} $stratum \neq M[coface]$}
      \State $stratum \gets NULL$
      \State \bf{break}
    \Else{}
      \State $stratum \gets M[coface]$
    \EndIf
  \EndFor
  \State \Return $stratum$
\EndProcedure
\end{algorithmic}
\end{algorithm}

\subsubsection{Small link subroutine}

This subroutine returns the small link of a given simplex by a procedure similar to that of a connected component analysis in a directed graph. The small link is found by following the cofaces recursively and constructing a set. There are multiple paths to the same coface, so the recursion should terminate on cofaces that are already in the set. For easier access, the set is organized by the relative dimension. Finally, the set only needs to be constructed to store cofaces up to the relative dimension $\left \lceil k/2 \right \rceil + 1$, where $k=n_{\cur} -d$. Note that the indexing is such that $SL[i]$ is the set of cofaces $\tau$ of the given simplex $\sigma$ with $\dim(\tau) - \dim(\sigma) = i+1$.

Algorithm \ref{alg:getsmalllink} shows pseudocode that implements this procedure.

\begin{algorithm}[H]
\caption{Finding small link}\label{alg:getsmalllink}
\begin{algorithmic}[1]
\State $n_{\cur} \gets $ \texttt{current top dimension}
\newline
\Procedure{getSmallLink}{$simplex$}
  \State $d \gets simplex.getDimension()$
  \State $SL \gets $\texttt{array of } $ceil((n_{\cur}-d)/2)$ \texttt{empty sets}
  \State $addCofaces(SL, simplex, 0)$
  \State \Return $SL$
\EndProcedure
\newline
\Procedure{addCofaces}{$SL, simplex, sl\_dim$}
  \ForAll{$coface \in simplex.getImmediateCofaces()$}
    \If{$coface \notin SL[sl\_dim]$}
      \State $SL[sl\_dim].add(coface)$
      \If{$sl\_dim < SL.size()-1$}
        \State $addCofaces(SL, coface, sl\_dim+1)$
      \EndIf
    \EndIf
  \EndFor
\EndProcedure
\end{algorithmic}
\end{algorithm} 

\subsection{The case of codimension $\leq 3$}\label{sec:3d-alg}


\subsubsection{Codimension 0/1} \label{sec:codim01}

A $c_1$-simplex is in the $n_{\cur}$-strata if and only if it has exactly two cofaces. Furthermore, the strata of two $c_0$-simplices are merged if they share a $c_1$-simplex. Given these conditions, the codimension 0/1 case can be reduced to a connected components search together with an additional condition.

First, recall that the simplices are stored in a multi-partite graph.
In particular, $c_0$-simplices and $c_1$-simplices form a bipartite graph. 
We can then collapse this bipartite graph and consider each $c_1$-simplex as a node in a graph with cofaces determining the edges. Finding a connected component in the original bipartite graph is then equivalent to finding that connected component in the collapsed graph.

Now add the additional condition that two nodes sharing an edge are connected if and only if they are both in the $n_{\cur}$-strata.
Furthermore, any coface of a $c_1$-simplex in the $n_{\cur}$-strata is considered ``connected'' to the simplex.
A connected component of one node is counted only if the $c_1$-simplex is in the top stratum, and it include the cofaces of the simplex.
Both the nodes (the $c_1$-simplices) and the $c_0$-simplices in each connected component lie in the same $n_{\cur}$-stratum. Finally, all leftover $c_0$-simplices are assigned to individual strata.

There are multiple well-known implementations of a connected components search. Here, we give a depth-first recursive algorithm.
At each node, test the following:
\begin{enumerate}[topsep=2pt]
\item $terminate$ if the node is assigned.
\item $terminate$ if the node is not in the $n_{\cur}$-strata.
\end{enumerate}
If unterminated, the node and its cofaces are added to the stratum, and the recursive check then continues to all nodes connected to the simplex.

Algorithm \ref{alg:connectedcomponent} shows pseudocode for this recursive search.
\begin{algorithm}[H]
\caption{Recursive Connected Component Search}\label{alg:connectedcomponent}
\begin{algorithmic}[1]
\State $M \gets $ \texttt{Map from simplex to stratum. Default $NULL$. }
\newline
\Procedure{connectedComponentSearch}{$stratum,simplex$}
\State $CC \gets simplex.getImmediateCofaces()$
\If{$M[simplex] = NULL$ \bf{and} $ CC.size() = 2$}
\State $M[simplex] \gets stratum$
\ForAll{$coface \in CC$}
\State $M[coface] \gets stratum$
\ForAll{$face \in coface.getImmediateFaces()$}
\State $connectedComponentSearch(stratum,face)$
\EndFor
\EndFor
\EndIf
\EndProcedure
\end{algorithmic}
\end{algorithm}
Generally, a depth-first connected components search algorithm needs to store nodes that have been visited to avoid getting stuck in an infinite loop. However, the above two termination conditions turn out to be sufficient to replace a check for node visitation. To see this, note that:
\begin{itemize}[topsep=2pt]
\item A node in the $n_{\cur}$-strata will have a stratum assigned on the first visit, and condition (1) will then cause termination on subsequent visits.
\item A node that is not in the $n_{\cur}$-strata will always terminate by condition (2), so it does not require the visited check.
\end{itemize}

With that said, it will be more computationally efficient to store visited nodes because the check for $n_{\cur}$-strata membership may not be O(1).
\\

The main part of the codimension $1$ code iterates through every $c_1$-simplex, and calls Algorithm \ref{alg:connectedcomponent} when it finds a simplex in the $n_{\cur}$-strata. Afterwards, remaining $c_0$-simplices are assigned to individual $n_{\cur}$-strata.
\\

Algorithm \ref{alg:codim1} shows pseudocode for the codimension $1$ case.
\begin{algorithm}[H]
\caption{Codimension 1 case}\label{alg:codim1}
\begin{algorithmic}[1]
\State $M \gets $ \texttt{Map from simplex to stratum. Default $NULL$. }
\State $n_{\cur} \gets $ \texttt{current top dimension}
\newline
\Procedure{codimOneCase}{}
\ForAll{$simplex \in c_1\text{-}simplices$ }
\State $CC \gets simplex.getImmediateCofaces()$
\If{$M[simplex] = NULL$ \bf{and} $ CC.size() = 2$}
\State $stratum \gets addNewStratum(top\_dimension \gets n_{\cur})$
\State $connectedComponentSearch(stratum,simplex)$
\EndIf
\EndFor
\ForAll{$simplex \in c_0\text{-}simplices$ }
\If{$M[simplex] = NULL$}
\State $stratum \gets addNewStratum(top\_dimension \gets n_{\cur})$
\State $M[simplex] \gets stratum$
\EndIf
\EndFor
\EndProcedure
\end{algorithmic}
\end{algorithm}
Here, \texttt{addNewStratum} creates a new stratum.

\subsubsection{Codimension 2}

To check if a $c_2$-simplex is in the $n_{\cur}$-strata, it suffices to test if all its cofaces lie in the same $n_{\cur}$-stratum. The algorithm for codimension $2$ thus simply iterates through the $c_2$-simplices and checks if all of its cofaces are in the same $n_{\cur}$-stratum.
\\

Algorithm \ref{alg:codim2} shows pseudocode for the codimension 2 case. 
\begin{algorithm}[H]
\caption{Codimension 2 case}\label{alg:codim2}
\begin{algorithmic}[1]
\State $M \gets $ \texttt{Map from simplex to stratum. Default $NULL$. }
\newline
\Procedure{codimTwoCase}{}
  \ForAll{$simplex \in c_2\text{-}simplices$ }
    \If{$M[simplex] = NULL$}
      \State $stratum \gets uniqueStratumAmongCofaces(simplex)$
      \If{$stratum \neq NULL$}
        \State $M[simplex] \gets stratum$
      \EndIf
    \EndIf
  \EndFor
\EndProcedure
\end{algorithmic}
\end{algorithm}
Any $c_2$-simplex that is unassigned is left for the next iteration.

\subsubsection{Codimension 3}

For a $c_3$-simplex, we have to first check if all of its immediate cofaces lie in the same $n_{\cur}$-stratum, and then check if the Euler characteristic $\chi$ of its small link is $2$. Here, $\chi \coloneq V - E + F$, where $V$, $E$ and $F$ is the number of $c_2$, $c_1$ and $c_0$-simplices in the small link, respectively.
\\

Algorithm \ref{alg:codim3} shows pseudocode for the codimension $3$ case.
\begin{algorithm}[H]
\caption{Codimension 3 case}\label{alg:codim3}
\begin{algorithmic}[1]
\State $M \gets $ \texttt{Map from simplex to stratum. Default $NULL$. }
\newline
\Procedure{codimThreeCase}{}
  \ForAll{$simplex \in c_3\text{-}simplices$ }
    \If{$M[simplex] = NULL$}
      \State $stratum \gets uniqueStratumAmongCofaces(simplex)$
      \If{$stratum \neq NULL$}
        \State $SL \gets getSmallLink(simplex)$
        \State $x \gets SL[0].size()-SL[1].size()+SL[2].size()$
        \If{$x = 2$}
          \State $M[simplex] \gets stratum$
        \EndIf
      \EndIf
    \EndIf
  \EndFor
\EndProcedure
\end{algorithmic}
\end{algorithm}
Any $c_3$-simplex that is unassigned is left for the next iteration.

\subsection{Time complexity}

The canonical stratification takes an $n$-complex as input, so this subsection will present the time complexity in terms of size of this $n$-complex. To do this, let us denote the number of simplices of dimension $d$ by $s_d$ and the total number of simplices by $s$. We first discuss the time complexity in general and then specialize to the $n \leq 3$ case. In addition, at the end we briefly discuss how time complexity scales with the number $s_0$ of $0$-simplices. This is useful for situations where the input $n$-complex is constructed as a sort of auxiliary structure on top of the $0$-simplices, which represent the data points of interest.

\subsubsection{General case}

For the general case, the algorithm computes the integral homology of the small links. The running time for computing integral homology of a chain complex is dominated by the operation of computing Smith normal form over $\ZZ$. There exist many algorithms of varying time complexity for computing Smith normal form \cite{10.1007/978-3-662-05148-1_10}. Let us black box this running time as O($\Theta(s)$). The number of needed homology group calculations scales as O($s$). To conclude, the time complexity of the canonical stratification algorithm is O($s \cdot \Theta(s)$).

In many types of constructions of simplicial complexes in which the size scales but the dimension is fixed, the way in which new simplices are added is such that the small link size is O($1$). However, the small link size does generally change with the dimension of the input complex. If we fix the dimension $n$ and invoke the $O(1)$ assumption on the small link size, then the time complexity further reduces to O($s$).

\subsubsection{$(n \leq 3)$-complex case}

The following are the contributions of the three special cases as well as removing the $n_{\cur}$-strata:

\begin{itemize}
\item \textbf{Codimension 1}: A general connected component analysis similar to the one shown in Algorithm \ref{alg:connectedcomponent} is a O($V+E$) workload, where $V$ is the number of vertexes and $E$ is the number of edges. The additional check to see if a simplex has 2 cofaces can be done in O(1). Thus, the complexity is O($s$).

\item \textbf{Codimension 2}: A $d$-simplex has O($s_{d+1}$) cofaces, so the codimension 2 case has complexity O($s_d*s_{d+1}$). Thus, the complexity is O($s^2$).

\item \textbf{Codimension 3}: With $n \leq 3$, the codimension $3$ case only occurs with $c_3$-simplices being 0-simplices. The procedure for finding the small link of a 0-simplex is a variant of a connected components search, and scales as O($s$). This is repeated for each 0-simplex, so the final complexity is once again O($s^2$).

\item \textbf{Removing $n_{\cur}$-strata}: Once again, a $d$-simplex has O($s_{d+1}$) cofaces, so updating the list of remaining simplices also has complexity O($s^2$).
\end{itemize}

Putting it all together, the $(n \leq 3)$-complex case has time complexity O($s^2$). However, the above analysis uses the worst case scaling for the number of cofaces; that is, O($s_{d+1}$) for a $d$-simplex. If we assume that the number of cofaces for a $d$-simplex is O(1), then finding the small link will also be a O(1) operation. Thus, with this assumption the time complexity is further reduced to O($s$).

\subsubsection{On the size of a simplicial complex}

In practical applications, the simplicial complex of interest is often constructed from a set of data points acting as the $0$-simplices. Therefore, it is useful to determine how $s$ scales with the number of 0-simplices $s_0$ so that the complexity can be expressed in terms of $s_0$.
Though the exact scaling depends on the particular mechanism used for simplicial complex construction, a naive combinatorial upper bound can be found for $s_d$ supposing $d \ll s_0$:
\[  s_{d} \leq {s_0 \choose {d+1}} \approx {s_0}^{d+1} \]
We thus see that an arbitrary abstract simplicial complex of dimension $n$ has O(${s_0}^{n+1}$) simplices given $n \ll s_0$. Thus, the time complexity of the canonical stratification algorithm is O(${s_0}^{(3n+3)}$) for $n>3$ and O(${s_0}^{(2n+2)}$) for $n \leq 3$. 

Restricting the $n$-complex to one which admits a (piecewise linear) embedding into Euclidean space $\RR^m$ gives additional constraints that can reduce the size, depending on $m$ and $n$. In particular, we have (cf. \cite[\S 4]{Bjorner2017}):

\begin{itemize}
    \item If $n=1$ and $m=2$ (i.e., the simplicial complex is a planar graph), then $s_1 \leq 3 s_0 - 6$.
    \item If $n=2$ and $m=2$, then in addition $s_2 \leq 2 s_0-5$.
    \item If $n=2$ and $m=3$, then $s_2 \leq s_0 (s_0-3)$.
    \item If $n=3$ and $m=3$, then in addition $s_3 \leq s_0 (s_0 - 3)/2 -1$.
\end{itemize}

Thus, the upper bound of $s$ is O($s_0$) given an embedding in $\RR^2$ or O(${s_0}^2$) given an embedding in $\RR^3$, yielding a time complexity for the algorithm of O(${s_0}^2$), respectively O(${s_0}^4$). Finally, if we add the assumption that the number of cofaces for a simplex scales as O($1$), then the complexity is further reduced to O($s_0$), respectively O(${s_0}^2$).



\subsection{Experimental results}

We present some experimental results on the time complexity scaling for triangulations of the $2$-sphere and $3$-ball, displayed in Fig. \ref{fig:experiment}. The input simplicial complexes in both cases were constructed by means of the Delaunay triangulation. Specifically, triangulations of the $2$-sphere were generated by the Delaunay triangulation of randomly generated points on the surface of a unit sphere. Note that given a sufficiently large number of vertices, this procedure consistently creates valid triangulations of $2$-spheres. Likewise, triangulations of the $3$-ball were generated by the Delaunay triangulation of randomly generated points inside a unit sphere. Note also that the number of simplices scales as O($s_0$) and the small link size scales as O($1$) for the Delaunay triangulation. Thus, we expect to see linear scaling with respect to both $s$ and $s_0$ for both the $2$-sphere and $3$-ball.

\begin{figure}
\begin{center} 
\begin{subfigure}{.45\textwidth}
  \centering
  \includegraphics[width=.8\linewidth]{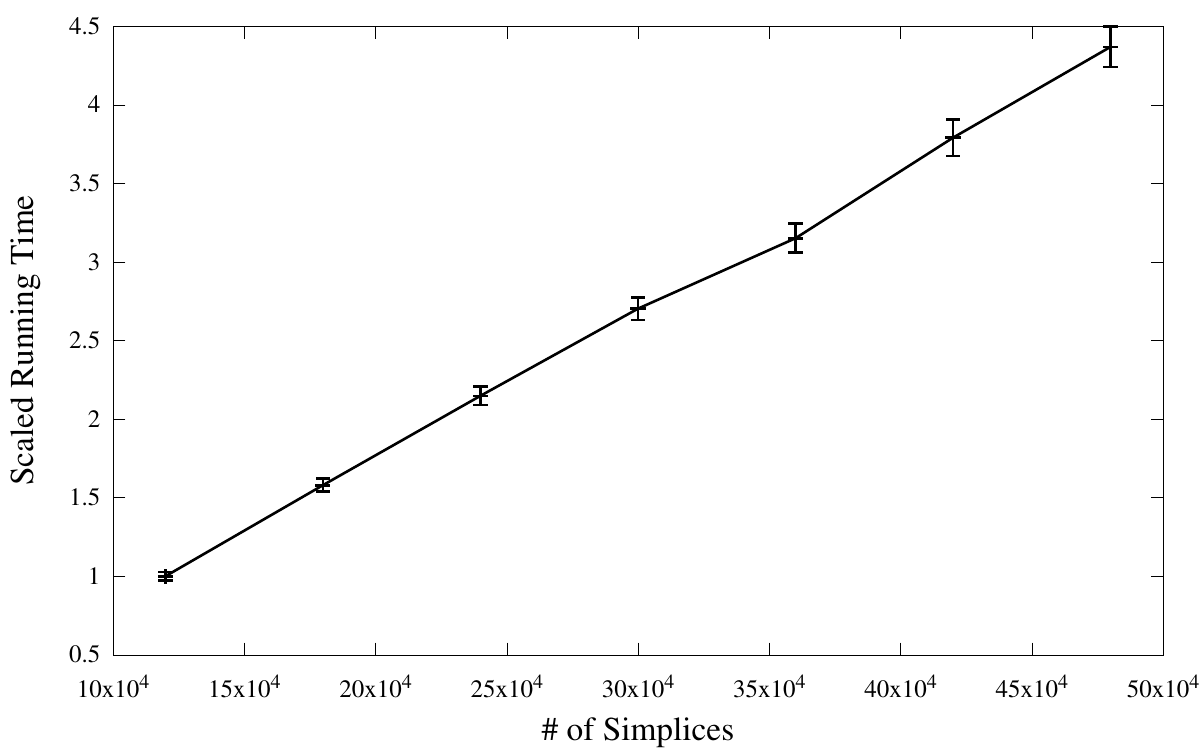}
  \caption{}\label{fig:sfig1}
\end{subfigure}%
\begin{subfigure}{.45\textwidth}
  \centering
  \includegraphics[width=.8\linewidth]{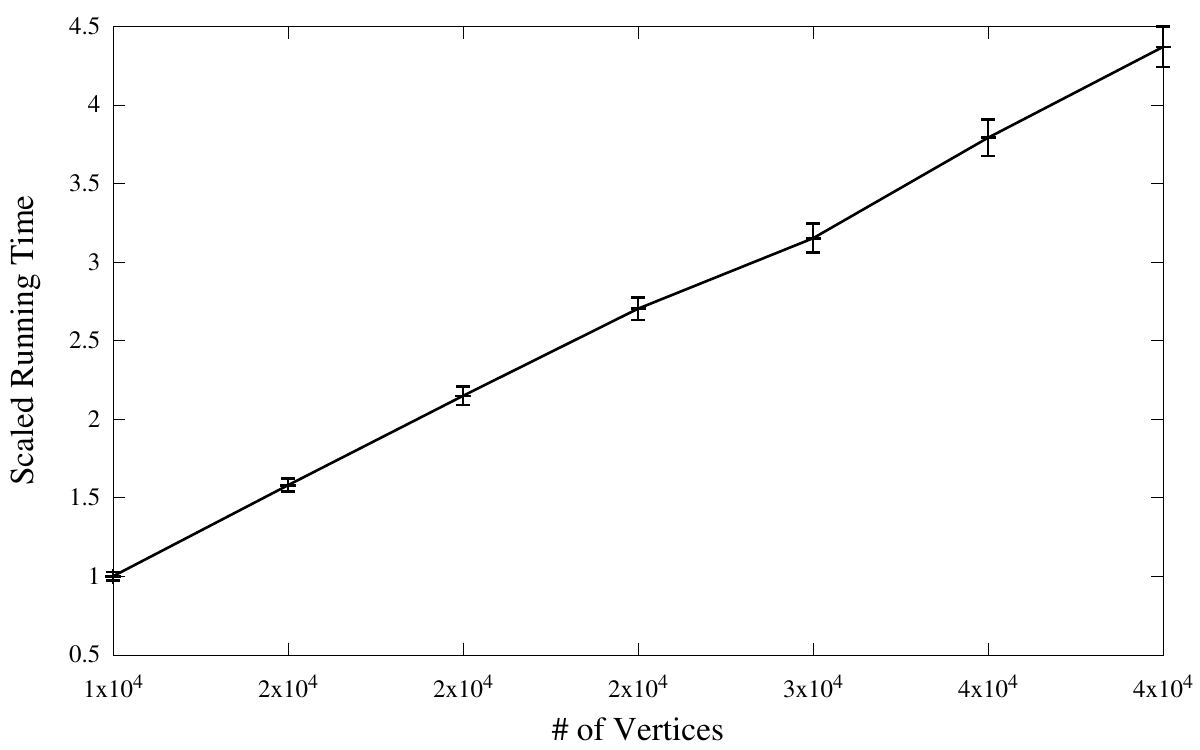}
  \caption{}\label{fig:sfig2}
\end{subfigure}
\begin{subfigure}{.45\textwidth}
  \centering
  \includegraphics[width=.8\linewidth]{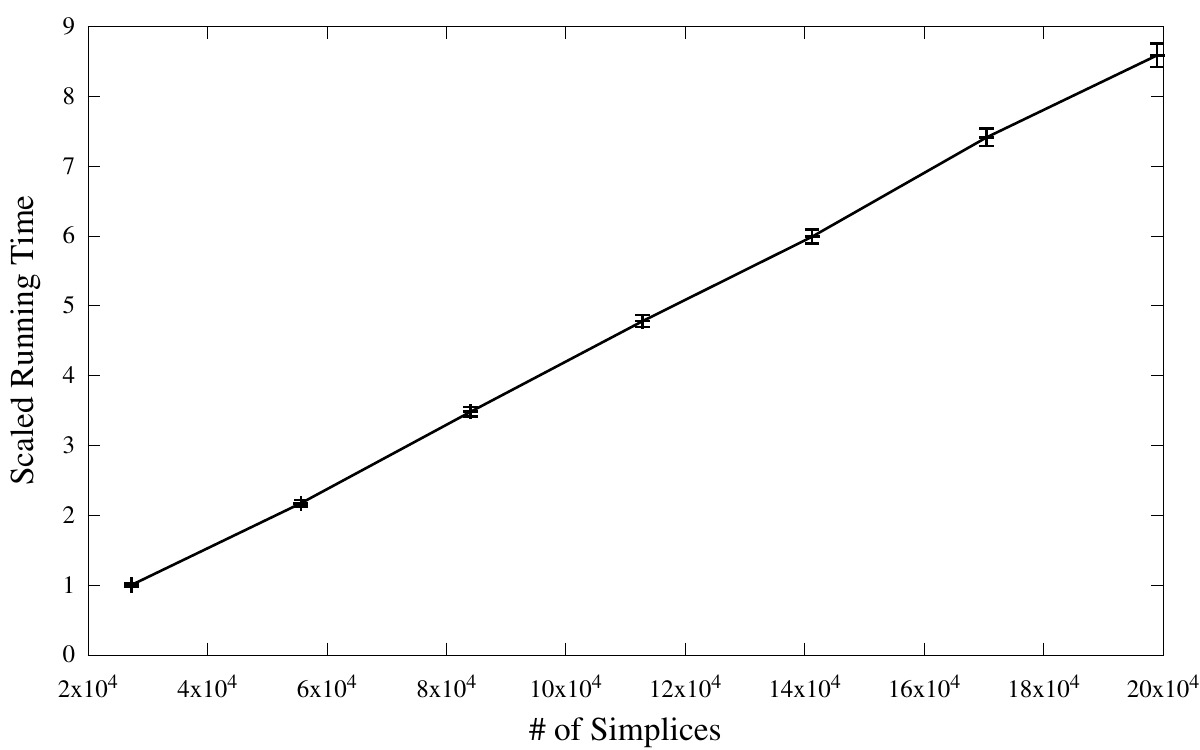}
  \caption{}\label{fig:sfig3}
\end{subfigure}
\begin{subfigure}{.45\textwidth}
  \centering
  \includegraphics[width=.8\linewidth]{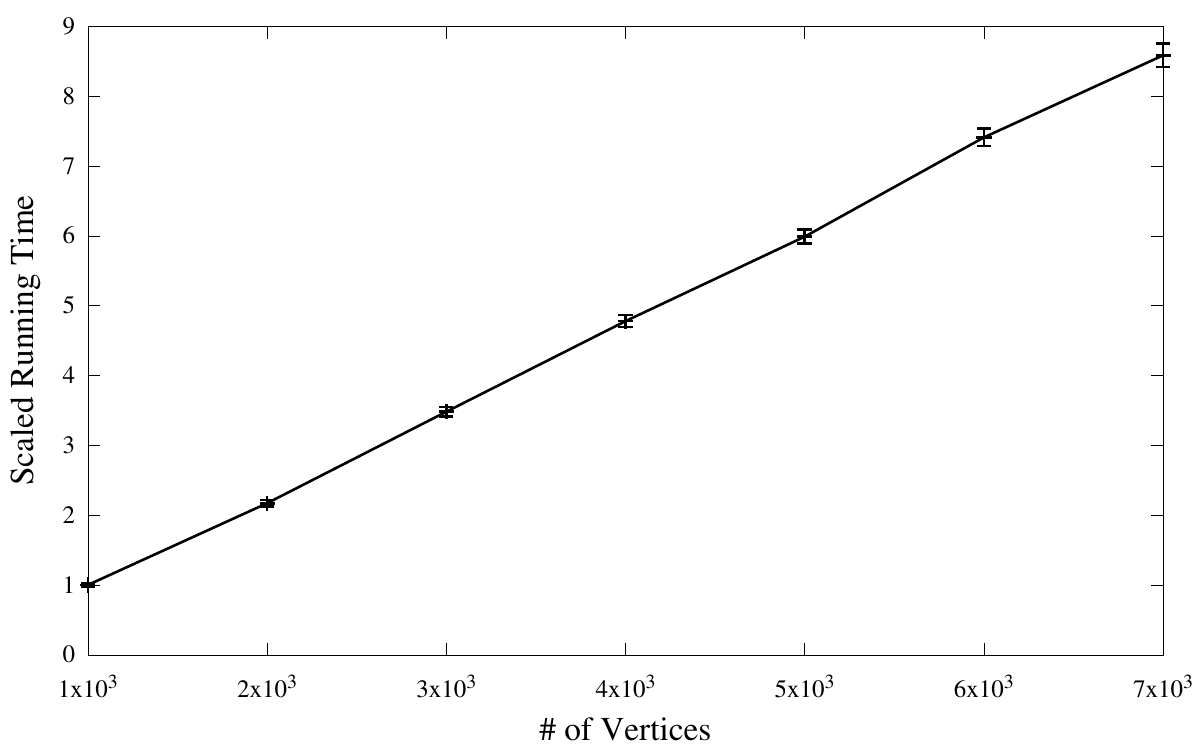}
  \caption{}\label{fig:sfig4}
\end{subfigure}
\caption{\label{fig:experiment} (A) $2$-sphere by $s$. (B) $2$-sphere by $s_0$. (C) $3$-ball by $s$. (D) $3$-ball by $s_0$. The reported values are the average time and standard deviation from 10 trials, scaled so that the first point is at 1.}
\end{center}
\end{figure}


\clearpage

\bibliographystyle{amsalpha}
\bibliography{Gcats}

\end{document}